\newcolumntype{C}[1]{>{\centering\let\newline\\\arraybackslash\hspace{0pt}}m{#1}}
\newcommand{\R}{\mathbb{R}}
\DeclareMathOperator*{\argmin}{arg\,min}
\newenvironment{prooftext}[1]{\par\noindent{\bf #1\ }}{\hfill\BlackBox\\[2mm]}
\newcommand{\N}{\mathbb{N}}
\newcommand{\calF}{\mathcal{F}}
\newcommand{\calM}{\mathcal{M}}
\newcommand{\calO}{\mathcal{O}}
\newcommand{\prox}{\textnormal{prox}}
\newcommand{\E}{\mathbb{E}}
\newcommand{\Ek}{\mathbb{E}_k}
\newcommand{\defeq}{\stackrel{\mathrm{def}}{=}}
\newcommand{\symnum}[3][2]{\stackrel{\mathclap{\raisebox{.5pt}{\footnotesize \textcircled{\raisebox{-.9pt} {#2}}}}}{#3}}
\newcommand{\numcirc}[1]{$\raisebox{.5pt}{\footnotesize \textcircled{\raisebox{-.9pt} {#1}}}$}
\newcommand{\BlackBox}{$\blacksquare$}
\newcommand{\norm}[1]{{|\kern-1.125pt|} #1 {|\kern-1.125pt|}}
\newcommand{\iprod}[2]{\langle #1,\,#2 \rangle}
\newcommand{\msum}{\mathbin{\scalebox{1.25}{\ensuremath{\sum}}}}
\newcommand{\beq}{\begin{equation}}
\newcommand{\eeq}{\end{equation}}
\newcommand{\beqn}{\begin{equation*}}
\newcommand{\eeqn}{\end{equation*}}
\newcommand{\beqs}{\begin{equation}\begin{small}}
\newcommand{\eeqs}{\end{small}\end{equation}}
\renewcommand{\numcirc}[1]{{\raisebox{.5pt}{\footnotesize \textcircled{\raisebox{-.9pt} {#1}}}}}
\newcommand{\pa}[1]{(#1)}
\newcommand{\Pa}[1]{\big({#1}\big)}
\newcommand{\bPa}[1]{{\Big(}{#1}{\Big)}}
\newcommand{\ba}[1]{\{#1\}}
\newcommand{\Ba}[1]{\big\{#1\big\}}
\newcommand{\bBa}[1]{\Big\{#1\Big\}}
\newcommand{\ca}[1]{[#1]}
\newcommand{\Ca}[1]{\big[#1\big]}
\newcommand{\bCa}[1]{\Big[#1\Big]}
\newcommand{\tnabla}{\widetilde{\nabla}}
\newcommand{\tnablak}[1]{\widetilde{\nabla}_{#1}}
\newcommand{\seq}[1]{\{#1\}_{k\in\N}}
\newcommand{\xsol}{x^{\star}}
\newcommand{\xbar}{\bar{x}}
\newcommand{\xkm}{x_{k-1}}
\newcommand{\xk}{x_{k}}
\newcommand{\xkp}{x_{k+1}}
\newcommand{\sfrac}[2]{ {{\frac{\raisebox{-0.175em}{\footnotesize $#1$}}{\raisebox{0.175em}{\footnotesize $#2$}}}} }
\newcommand{\gradSGD}{\widetilde{\nabla}^{\textnormal{\tiny SGD}}}
\newcommand{\gradSAG}{\widetilde{\nabla}^{\textnormal{\tiny SAG}}}
\newcommand{\gradSAGA}{\widetilde{\nabla}^{\textnormal{\tiny SAGA}}}
\newcommand{\gradBSAGA}{\widetilde{\nabla}^{\textnormal{\tiny B-SAGA}}}
\newcommand{\gradSVRG}{\widetilde{\nabla}^{\textnormal{\tiny SVRG}}}
\newcommand{\gradBSVRG}{\widetilde{\nabla}^{\textnormal{\tiny B-SVRG}}}
\newcommand{\gradSARAH}{\widetilde{\nabla}^{\textnormal{\tiny SARAH}}}
\newcommand{\gradSARGE}{\widetilde{\nabla}^{\textnormal{\tiny SARGE}}}
\newtheoremstyle{dotless}{}{}{\itshape}{}{\bfseries}{}{ }{}
\theoremstyle{dotless}
\newtheorem*{proposition*}{Proposition}
\title{On Biased Stochastic Gradient Estimation}
\author{Derek Driggs\thanks{\texttt{d.driggs@damtp.cam.ac.uk}}}
\author{Jingwei Liang\thanks{\texttt{jl993@cam.ac.uk}},}
\author{Carola-Bibiane Sch\"onlieb\thanks{\texttt{cbs31@cam.ac.uk}}}
\affil{Department of Applied Mathematics and Theoretical Physics, Cambridge University}
\newtheorem{theorem}{Theorem}
\newtheorem{lemma}[theorem]{Lemma}
\newtheorem{corollary}[theorem]{Corollary}
\newtheorem{remark}{Remark}
\newtheorem{definition}{Definition}
\begin{document}

\maketitle

\begin{abstract}
We present a uniform analysis of biased stochastic gradient methods for minimizing convex, strongly convex, and non-convex composite objectives, and identify settings where bias is useful in stochastic gradient estimation. The framework we present allows us to extend proximal support to biased algorithms, including SAG and SARAH, for the first time in the convex setting. 
We also use our framework to develop a new algorithm, Stochastic Average Recursive GradiEnt (SARGE), that achieves the oracle complexity lower-bound for non-convex, finite-sum objectives and requires strictly fewer calls to a stochastic gradient oracle per iteration than SVRG and SARAH. 
We support our theoretical results with numerical experiments that demonstrate the benefits of certain biased gradient estimators.
\end{abstract}

\section{Introduction}\label{sec:introduction}

In this paper, we focus on the following composite minimisation problem:
\beq\label{eq:mainopt}
\min_{x \in \R^p} \Ba{ F(x) \defeq f(x) + g(x) } .
\eeq
Throughout, we assume:
\begin{itemize}[itemsep=-1ex,topsep=2pt]
\item $g$ is proper and closed such that its proximity operator (see \eqref{eq:prox} in Section \ref{sec:prelims}) is well posed, 

\item $f$ admits finite-sum structure $f(x) \defeq \frac{1}{n} \sum_{i=1}^n f_i(x)$, and for all $i \in \{1,2,\cdots,n\}$, $\nabla f_i$ is $L$-Lipschitz continuous for some $L > 0$.
\end{itemize}
We place no further restrictions on $f_i$ or $g$ unless stated otherwise. 

Problems of this form arise frequently in many areas of science and engineering, such as machine learning, statistics, operations research, and imaging. 
For instance, in machine learning, these problems often arise as empirical risk minimisation problems from classification and regression tasks. Examples include ridge regression, logistic regression, Lasso, and $\ell_1$-regularized logistic regression \cite{bishop2006pattern}. Principal component analysis (PCA) can also be formulated as a problem with this structure, where the functions $f_i$ are non-convex \cite{pcanoncon,svrgplusplus}. In imaging, $\ell_1$ or total variation regularization is often combined with differentiable data discrepancy terms that appear in both convex and non-convex instances \cite{bredies2018mathematical}.

\subsection{Stochastic gradient methods}

Two classical approaches to solve \eqref{eq:mainopt} are the proximal gradient descent method (PGD) \cite{lions1979splitting} and its accelerated variants, including inertial PGD \cite{liang2017activity} and FISTA \cite{fista}. For these deterministic approaches, the full gradient of $f$ must be evaluated at each iteration, which often requires huge computational resources when $n$ is large. Such a drawback makes these schemes unsuitable for large-scale machine learning tasks. 

By exploiting the finite sum structure of $f$, stochastic gradient methods enjoy low per-iteration complexity while achieving comparable convergence rates. These qualities make stochastic gradient methods the standard approach to solving many problems in machine learning, and are gaining popularity in other areas such as image processing \cite{spdhg}. Stochastic gradient descent (SGD) was first proposed in the 1950's \cite{sgd} and has experienced a renaissance in the past decade, with numerous variants of SGD proposed in the literature (see, for instance, \cite{SAG,SVRG,SAGA} and references therein). Most of these algorithms can be summarised into one general form, which is described below in Algorithm \ref{alg:general_sgd}.

\begin{center}
\begin{minipage}{0.95\linewidth}

\begin{algorithm}[H]
\caption{Stochastic gradient descent framework}
\begin{algorithmic}[1]
\Require starting point $x_0 \in \R^p$, gradient estimator $\tnabla$.
\For{$k = 0,1,\cdots, T-1$}
\State \textrm{Compute stochastic gradient $\tnablak{k}$ at current iteration $k$.}
\State \textrm{Choose step size/learning rate $\eta_k$.}
\State \textrm{Update $\xkp$:}
\beq\label{eq:general_sgd}
x_{k+1} \leftarrow \prox_{\eta_{k} g} \pa{ x_k - \eta_{k} \tnablak{k} } .
\eeq
\EndFor
\end{algorithmic}
\label{alg:general_sgd}
\end{algorithm}

\end{minipage}
\end{center}

{\noindent}Below we summarize several popular stochastic gradient estimators $\tnablak{k}$:
\begin{itemize}[leftmargin=2em, label={\small{${\blacktriangleright}$}}, itemsep=-1pt]
\item {\bf SGD} Classic stochastic gradient descent \cite{sgd} uses the following gradient estimator at iteration $k$:
\beq\label{eq:grad_sgd}
\begin{aligned}
&\left\lfloor \begin{aligned}
& \textrm{sample $j_k$ uniformly at random from $\{1,...,n\}$} , \\
& \gradSGD_{k} = \nabla f_{j_{k}} (\xk).
\end{aligned}
\right. \\
\end{aligned}
\eeq
At each step, SGD uses the gradient of the sampled function $\nabla f_{j_k}(\xk)$ as a stochastic approximation of the full gradient $\nabla f(\xk)$. It is an unbiased estimate as $\Ek\ca{\nabla f_{j_k}(x_k)} = \nabla f(\xk)$. It is also \emph{memoryless}: every update of $x_{k+1}$ depends only upon $x_k$ and the random variable $j_k$. The variance of SGD is does not vanish as $\xk$ converges. 

\item {\bf SAG} To deal with the non-vanishing variance of SGD, in \cite{roux2012stochastic,SAG} the authors introduce the SAG gradient estimator, which uses the gradient history to decrease its variance. With $\nabla f_i(\varphi^i_0) = 0, i=1,...,n$, the SAG gradient estimator is computed using the following procedure:
 \beq\label{eq:grad_sag}
 \begin{aligned}
 &\left\lfloor \begin{aligned}
 & \textrm{sample $j_k$ uniformly at random from $\{1,...,n\}$} , \\
 & \gradSAG_{k} = \tfrac{1}{n} \pa{ \nabla f_{j_{k}} (\xk) - \nabla f_{j_{k}} (\varphi_{k}^{j_k}) } + \tfrac{1}{n} \msum_{i=1}^{n} \nabla f_{i} (\varphi_{k}^{i}) , \\
 & \textrm{update the gradient history: $\varphi_{k+1}^{j_{k}} = \xk$ and } \nabla f_{i} (\varphi_{k+1}^{i}) = \left\{ \begin{aligned} &\nabla f_{i}(\xk) && \textrm{if~~} i = j_{k} , \\ & \nabla f_{i} (\varphi_{k}^{i}) && \textrm{o.w.} \end{aligned} \right. 
 \end{aligned}
 \right. \\
 \end{aligned}
 \eeq
Here, for each $i \in \ba{1,...,n}$, $\nabla f_{i} (\varphi_{k}^{i})$ is a stored gradient of $\nabla f_i$ from a previous iteration. 
With the help of memory, the variance of the SAG gradient estimator diminishes as $\xk$ approaches the solution; estimators that satisfy this property are known as \emph{variance-reduced} estimators. In contrast to the SGD estimator, $\gradSAG_{k}$ is a {\em biased} estimate of $\nabla f(\xk)$.

\item {\bf SAGA} Based on \cite{roux2012stochastic,SAG}, \cite{SAGA} propose the unbiased gradient estimator SAGA, which is computed using the procedure below.
 \beq\label{eq:grad_saga}
 \begin{aligned}
 &\left\lfloor \begin{aligned}
 & \textrm{Sample $j_k$ uniformly at random from $\{1,...,n\}$} , \\
 & \gradSAGA_{k} = { \nabla f_{j_{k}} (\xk) - \nabla f_{j_{k}} (\varphi_{k}^{j_k}) } + \tfrac{1}{n} \msum_{i=1}^{n} \nabla f_{i} (\varphi_{k}^{i}) , \\
 & \textrm{update the gradient history: $\varphi_{k+1}^{j_{k}} = \xk$ and } \nabla f_{i} (\varphi_{k+1}^{i}) = \left\{ \begin{aligned} &\nabla f_{i}(\xk) && \textrm{if~~} i = j_{k} , \\ & \nabla f_{i} (\varphi_{k}^{i}) && \textrm{o.w.} \end{aligned} \right. 
 \end{aligned}
 \right. \\
 \end{aligned}
 \eeq
Compared to $\gradSAG$, the SAGA estimator gives less weight to stored gradients. With this adjustment, $\gradSAGA$ is unbiased while maintaining the variance reduction property. Similar gradient estimators can be found in Point-SAGA \cite{pointsaga}, Finito \cite{finito}, MISO \cite{miso}, SDCA \cite{sdca}, and those in \cite{neighbors}.

\item {\bf SVRG}
Another popular variance-reduced estimator is SVRG \cite{SVRG}. The SVRG gradient estimator is computed as follows:
\beq\label{eq:grad_svrg}
\begin{aligned}
&\left\lfloor  \begin{aligned}
&\textrm{For $s=0,\dotsm,S$} \\
& \textstyle \nabla f(\varphi_s) = \tfrac{1}{n} \sum_{i=1}^{n} \nabla f_{i}(\varphi_s), \\
&\textrm{For $k=1,\dotsm,m$} \\
&\left\lfloor  \begin{aligned}
   & \textrm{Sample $j_{k}$ uniformly at random from $\ba{1,\dotsm,n}$} , \\
   & \gradSVRG_{k} = \nabla f_{j_{k}}(x_{k}) - \nabla f_{j_{k}}(\varphi_s) + \nabla f(\varphi_s) , \\
  \end{aligned}
\right. \\
\end{aligned}
\right. \\
\end{aligned}
\eeq
where $\varphi_s$ is a ``snapshot'' point updated every $m$ steps. The algorithms prox-SVRG \cite{proxSVRG}, Katyusha \cite{katyusha}, KatyushaX \cite{katyushax}, Natasha \cite{natasha}, Natasha2 \cite{natasha2}, 
MiG \cite{MiG}, ASVRG \cite{ASVRG}, and VARAG \cite{varag} use the SVRG gradient estimator.

\item {\bf SARAH} In \cite{sarah} the authors proposed a recursive modification to SVRG.
\beq\label{eq:grad_sarah}
\begin{aligned}
&\left\lfloor  \begin{aligned}
&\textrm{For $s=0,\dotsm,S$} \\
& \textstyle \gradSARAH_{k-1} = \tfrac{1}{n} \sum_{i=1}^{n} \nabla f_{i}(\varphi_s), \\
&\textrm{For $k=1,\dotsm,m$} \\
&\left\lfloor  \begin{aligned}
   & \textrm{Sample $j_{k}$ uniformly at random from $\ba{1,\dotsm,n}$} , \\
   & \gradSARAH_{k} = \nabla f_{j_{k}}(x_{k}) - \nabla f_{j_{k}}(\xkm) + \gradSARAH_{k-1} , \\
  \end{aligned}
\right. \\
\end{aligned}
\right. \\
\end{aligned}
\eeq
Like SAG, SARAH is a biased gradient estimator. 
It is also used in prox-SARAH \cite{proxsarah},
SPIDER \cite{spider}, SPIDERBoost \cite{spiderboost} and SPIDER-M \cite{spiderm}. 

\end{itemize}
We refer to algorithms employing (un)biased gradient estimators as (un)biased stochastic algorithms, respectively.
The body of work on biased algorithms is stunted compared to the enormous literature on unbiased algorithms, leading to several gaps in the development of biased stochastic gradient methods. We list a few below.
\begin{itemize}[leftmargin=2em, itemsep=-2pt,topsep=2pt]
\item \textbf{Complex convergence proofs.} It is commonly believed that the relationship $\Ek [ \tnablak{k} ] = \nabla f(x_k)$ is essential for a simple convergence analysis (see, for example, the discussion in \cite{SAGA}). The convergence proof of the biased algorithm SAG is especially complex, requiring computational verification \cite{roux2012stochastic,SAG}.

\item \textbf{Sub-optimal convergence rates.} In the convex setting with $g\equiv 0$, SARAH achieves a complexity bound of $\mathcal{O}\pa{ \frac{\log(1/\epsilon )}{\epsilon} }$ \cite{sarah} for finding a point $\bar{x}_{k}$ such that $\E [F(\bar{x}_{k}) - F(\xsol)] \le \epsilon$. In comparison, SAGA and SVRG achieve a complexity bound of $\mathcal{O}( 1 / \epsilon )$ which is the same as deterministic proximal gradient descent.

\item \textbf{Lack of proximal support.} Bias also makes it difficult to handle non-smooth functions. To the best of our knowledge, there are no theoretical guarantees for biased algorithms to solve \eqref{eq:mainopt} with $g \not \equiv 0$ that take advantage of convexity when it is present.

\end{itemize}
Despite the above shortcomings, there are notable exceptions that suggest biased algorithms are worth further consideration. Recently, \cite{proxsarah,spider,spiderboost,spiderm} proved that algorithms using the SARAH gradient estimator require $\mathcal{O} \pa{ \sqrt{n} / \epsilon^2 }$ stochastic gradient evaluations to find an $\epsilon$-first order stationary point. This matches the complexity lower-bound for non-convex, finite-sum optimisation for smooth functions $f_i$ and $n \le \mathcal{O}(\epsilon^{-4})$ \cite{spider}. For comparison, the best complexity bound obtained for SAGA and SVRG in this setting is $\mathcal{O} \pa{ n^{2/3} / \epsilon^2 }$ \cite{reddi,zhunoncon}. 
A detailed summary of existing complexity bounds for the variance-reduced gradient estimators mentioned above is provided in Table \ref{tab:rates1} for convex, strongly convex and non-convex settings.

\renewcommand{\arraystretch}{1.25}

\begin{table}[!ht]
\begin{minipage}{\textwidth}
\centering
\begin{tabular}{| c | c | c | c | c |}
\hline
& Convex & Strongly Convex & Non-Convex & Proximal Support? \\
\hline
SAGA & $\mathcal{O}( \frac{n L}{\epsilon} )$ & $\mathcal{O}( (n + \kappa) \log(1 / \epsilon))$ & $\mathcal{O} ( \frac{n L}{\epsilon^2} )^{b}$ & Yes \\
\hline
SVRG & $\mathcal{O}( \frac{n L}{\epsilon} )$\footnote[1]{The algorithm SVRG++ reduces this rate to $\mathcal{O}( n \log( 1 /  \epsilon ) + L / \epsilon )$ using an epoch-doubling procedure \cite{svrgplusplus}.} & $\mathcal{O} ( (n + \kappa) \log (1 / \epsilon )  )$ & $\mathcal{O}( \frac{n L}{\epsilon^2} )$\footnote[2]{Mini-batching reduces the dependence on $n$ to $n^{2/3}$ \cite{reddi,zhunoncon}, and these rates are proven only in the case $g$ is convex.} & Yes \\
\hline
SAG & $\mathcal{O} ( \frac{n L}{\epsilon} )$ & $\mathcal{O} ( (n + \kappa) \log(1 / \epsilon) )$ & \textbf{Unknown} & \textbf{No} \\
\hline
SARAH & $\mathcal{O} ( \frac{L\log(1 / \epsilon) }{\epsilon}  )$ & $\mathcal{O} ( (n + \kappa) \log(1 / \epsilon))$ & $\mathcal{O} ( \frac{\sqrt{n} L}{\epsilon^2} )$ & \textbf{Non-Convex Only} \\
\hline
\end{tabular}
\caption{Existing complexity bounds for stochastic gradient estimators under different settings. These complexities represent the number of stochastic gradient oracle calls required to find a point satisfying $\E [ F(\bar{x}_{k}) - F(\xsol) ] \le \epsilon$ for the convex case, $\E \ca{ \|x_k - \xsol \|^2 } \le \epsilon$ for the strongly convex case, and an $\epsilon$-approximate stationary point (as in Definition \ref{def:gengrad}) in the non-convex case. In the strongly convex case, $\mu$ is the strong convexity constant, and $\kappa = L/\mu$ is the condition number.}
\label{tab:rates1}
\end{minipage}
\end{table}

\renewcommand{\arraystretch}{1.0}

\subsection{Contributions}

{\noindent}This work provides three main contributions: 
\begin{enumerate}[itemsep=-1pt,topsep=2pt]
\item We introduce a framework for the systematic analysis of a large class of stochastic gradient methods and investigate a bias-variance tradeoff arising from our analysis. Our analysis provides proximal support to biased algorithms for the first time in the convex setting.

\item We apply our framework to derive convergence rates for SARAH and biased versions of SAGA and SVRG on convex, strongly convex, and non-convex problems.

\item We design a new recursive gradient estimator, Stochastic Average Recursive GradiEnt (SARGE), that achieves the same convergence rates as SARAH but never computes a full gradient, giving it a strictly smaller per-iteration complexity than SARAH. In particular, we show that SARGE achieves the oracle complexity lower bound for non-convex finite-sum optimisation.

\end{enumerate}
To study the effects of bias on the SAGA and SVRG estimators, we introduce Biased SAGA (B-SAGA) and Biased SVRG (B-SVRG). For $\theta > 0$ the B-SAGA and B-SVRG gradient estimators are

\begin{itemize}[itemsep=-1pt,topsep=2pt]
\item B-SAGA: 
 \beq
\gradBSAGA_{k} \defeq \tfrac{1}{\theta} \pa{ \nabla f_{j_k}(x_k) - \nabla f_{j_k}(\varphi_k^{j_k}) } + \tfrac{1}{n} \msum_{i = 1}^n \nabla f_i(\varphi_k^i),
\eeq

\item B-SVRG: 
 \beq
\gradBSVRG_{k} \defeq \tfrac{1}{\theta} \pa{ \nabla f_{j_k}(x_k) - \nabla f_{j_k}(\varphi_s) } + \nabla f(\varphi_s).
\eeq

\end{itemize}
In both B-SAGA and B-SVRG, the bias parameter $\theta$ adjusts how much weight is given to stored gradient information. When $\theta = n$, $\gradBSAGA_{k}$ recovers the SAG gradient estimator.

Motivated by the desirable properties of SARAH, we propose a new gradient estimator, Stochastic Average Recursive GradiEnt (SARGE), which is defined below
\beq
\gradSARGE_k \defeq \nabla f_{j_k}(x_k) - \psi_k^{j_k} + \tfrac{1}{n} \msum_{i=1}^n \psi_k^i - \pa{ 1 - \tfrac{1}{n} } \pa{ \nabla f_{j_k}(x_{k-1}) - \gradSARGE_{k-1} } ,
\eeq
where the variables $\psi_k^i$ follow the update rule $\psi_{k+1}^{i_k} = \nabla f_{j_k}(x_k) - \pa{1 - \tfrac{1}{n}} \nabla f_{j_k} (x_{k-1})$. 
Similar to SAGA, SARGE uses stored gradient information to avoid having to compute the full gradient, a computational burden that SVRG and SARAH require for variance reduction.

A summary of the complexity results obtained from our analysis for SAG/B-SAGA, B-SVRG, SARAH, and SARGE are provided in Table \ref{tab:rates2}. Note that the result for SAG is included in B-SAGA.

\renewcommand{\arraystretch}{1.25}

\begin{table}[!ht]
\begin{minipage}{\textwidth}
\centering
\begin{tabular}{| c | c | c | c | c |}
\hline
& Convex & Strongly Convex & Non-Convex & Proximal Support? \\
\hline
B-SAGA\footnote[3]{Mini-batching reduces the dependence on $n$ to $n^{2/3}$ as in \cite{reddi,zhunoncon} giving these algorithms a lower complexity than full-gradient methods, but we do not consider mini-batching in this work.} & $\mathcal{O}( \frac{n L}{\epsilon} )$ & $\mathcal{O}( n \kappa \log(1 / \epsilon))$ & $\mathcal{O}( \frac{n L}{\epsilon^2} )$ & Yes \\
\hline
B-SVRG\textsuperscript{c} & $\mathcal{O}( \frac{n L}{\epsilon} )$ & $\mathcal{O}( n \kappa \log(1 / \epsilon))$ & $\mathcal{O}( \frac{n L}{\epsilon^2} )$ & Yes \\
\hline
SARAH & $\mathcal{O}( \frac{\sqrt{n} L}{\epsilon} )$ & $\mathcal{O}( \max\{\sqrt{n} \kappa, n\} \log(1 / \epsilon))$ & $\mathcal{O}( \frac{\sqrt{n} L}{\epsilon^2} )$ & Yes \\
\hline
SARGE & $\mathcal{O}( \frac{\sqrt{n} L}{\epsilon} )$ & $\mathcal{O}( \max\{\sqrt{n} \kappa, n\} \log(1 / \epsilon))$ & $\mathcal{O}( \frac{\sqrt{n} L}{\epsilon^2} )$ & {Yes} \\
\hline
\end{tabular}
\caption{Complexity bounds obtained from our analysis framework. These complexities represent the number of stochastic gradient oracle calls required to find a point $\bar{x}_k$ satisfying $\E [ F(\bar{x}_k) - F(\xsol) ] \le \epsilon$ for the convex case, $\E \ca{ \|x_k - \xsol \|^2 } \le \epsilon$ for the strongly convex case, and an $\epsilon$-approximate stationary point in the non-convex case.}
\label{tab:rates2}
\end{minipage}
\end{table}

\renewcommand{\arraystretch}{1.0}

\paragraph{Paper organization}
Preliminary results and notations are provided in Section \ref{sec:prelims}. A discussion on the tradeoff between bias and variance in stochastic optimisation is included in Section \ref{sec:bvtradeoff}. 
Our main convergence results are presented in Section \ref{sec:rates}. 
In Section \ref{sec:experiments}, we substantiate our theoretical results using numerical experiments involving several classic regression tasks arising from machine learning. All the proofs of the main results are collected in the appendix.

\section{Preliminaries and notations}\label{sec:prelims}

Throughout the paper, $\R^p$ is a $p$-dimensional Euclidean space equipped with scalar inner product $\iprod{\cdot}{\cdot}$ and associated norm $\norm{\cdot}$. The sub-differential of a proper closed convex function $g$ is the set-valued operator defined by $\partial g \defeq \Ba{ v\in\R^n | g(x') \geq g(x) + \iprod{v}{x'-x} , \forall x' \in \R^n }$, the proximal map of $g$ is defined as
\beq\label{eq:prox}
\textstyle \prox_{\eta g}(y) \defeq \argmin_{x\in\R^n} \eta g(x) + \tfrac{1}{2}\norm{x-y}^2,
\eeq
where $\eta > 0$ and $y \in \R^p$. With $y^+ = \prox_{\gamma g}(y)$, \eqref{eq:prox} is equivalent to $y - y^+ \in \eta \partial g(y^+)$.

Below we summarize some useful results in convex analysis.

\begin{lemma}[{\cite[Thm 2.1.5]{nest2004}}]
\label{lem:2L}
Suppose $f$ is convex with an $L$-Lipschitz continuous gradient. We have for every $x,u \in \R^p$,
\beq
\|\nabla f(x) - \nabla f(u)\|^2 
\le 2 L \pa{ f(x) - f(u) - \langle \nabla f(u),x-u \rangle } .
\eeq
\end{lemma}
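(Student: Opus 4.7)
The plan is to reduce this to the standard descent/co-coercivity argument by symmetrising around the reference point $u$. I would introduce the auxiliary function $\phi_u(x) \defeq f(x) - f(u) - \langle \nabla f(u), x - u\rangle$. Because $f$ is convex, $\phi_u$ is nonnegative with $\phi_u(u) = 0$, and since only a linear term has been subtracted, $\nabla \phi_u(x) = \nabla f(x) - \nabla f(u)$ is still $L$-Lipschitz. Hence $u$ is a global minimiser of $\phi_u$, and the right-hand side of the claimed inequality is exactly $2L\,\phi_u(x)$ while the left-hand side is $\|\nabla \phi_u(x)\|^2$.

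Next I would invoke the descent lemma for the $L$-smooth function $\phi_u$: for every $x, y \in \R^p$,
\beq
\phi_u(y) \;\le\; \phi_u(x) + \langle \nabla \phi_u(x), y - x\rangle + \tfrac{L}{2}\|y - x\|^2.
\eeq
I would then minimise the right-hand side in $y$ (closed form, attained at $y = x - \tfrac{1}{L}\nabla \phi_u(x)$) to obtain the lower bound $\phi_u(x) - \tfrac{1}{2L}\|\nabla \phi_u(x)\|^2$ on the left-hand side. Since $\phi_u(y) \ge 0$ for all $y$, the minimum over $y$ is also nonnegative, giving
\beq
0 \;\le\; \phi_u(x) - \tfrac{1}{2L}\|\nabla \phi_u(x)\|^2,
\eeq
which rearranges to the claim after multiplying by $2L$ and substituting $\nabla \phi_u(x) = \nabla f(x) - \nabla f(u)$.

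There is no real obstacle here; the only thing to be careful about is that the argument uses both the quadratic upper bound (from $L$-smoothness) and the global lower bound $\phi_u(\cdot) \ge 0$ (from convexity of $f$), so both hypotheses are genuinely needed. An alternative route would be to integrate the identity $\phi_u(x) = \int_0^1 \langle \nabla f(u + t(x-u)) - \nabla f(u), x-u\rangle\,dt$ and apply Cauchy–Schwarz together with $L$-smoothness, but the minimisation-of-the-descent-lemma approach is shorter and matches the proof in \cite{nest2004}.
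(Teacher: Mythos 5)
Your proof is correct and is precisely the standard argument from the reference the paper cites without reproducing it (Nesterov's Theorem 2.1.5): define $\phi_u(x) = f(x) - f(u) - \langle \nabla f(u), x-u\rangle$, note it is nonnegative, $L$-smooth, and minimised at $u$, then minimise the descent-lemma upper bound over $y$ and use $\phi_u \ge 0$. Nothing to add.
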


When $f$ is a finite sum as in \eqref{eq:mainopt}, Lemma \ref{lem:2L} is equivalent to the following result.

\begin{lemma}
\label{lem:varcan}
Let $f(x) = \tfrac{1}{n} \sum_{i=1}^n f_i(x)$, where each $f_i$ is convex with an $L$-Lipschitz gradient. Then for every $x, u \in \R^p$,
\beq
\textstyle \tfrac{1}{2 L n} \sum_{i=1}^n \|\nabla f_i(x) - \nabla f_i(u)\|^2 
\le f(x) - f(u) - \langle \nabla f(u), x - u \rangle .
\eeq
\end{lemma}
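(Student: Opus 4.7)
The plan is to derive this lemma as an immediate averaged consequence of Lemma \ref{lem:2L}. Since each $f_i$ is itself convex and has an $L$-Lipschitz continuous gradient, the hypotheses of Lemma \ref{lem:2L} apply individually to every summand, not only to the aggregate $f$. This is the key observation that makes the lemma essentially a one-line consequence of the previous result.

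Concretely, the first step is to apply Lemma \ref{lem:2L} to $f_i$ with the pair $(x,u)$, obtaining, for each $i \in \{1,\dots,n\}$,
\beq
\|\nabla f_i(x) - \nabla f_i(u)\|^2 \le 2L \pa{ f_i(x) - f_i(u) - \iprod{\nabla f_i(u)}{x-u} }.
\eeq
Next, I would sum these $n$ inequalities and divide by $n$. On the right-hand side, linearity of the sum together with the identities $f(x) = \tfrac{1}{n}\sum_i f_i(x)$ and $\nabla f(u) = \tfrac{1}{n}\sum_i \nabla f_i(u)$ collapse the averaged quantities into $f(x) - f(u) - \iprod{\nabla f(u)}{x-u}$. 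On the left-hand side, the sum remains as $\tfrac{1}{n}\sum_i \|\nabla f_i(x) - \nabla f_i(u)\|^2$. Finally, dividing both sides by $2L$ produces the claimed inequality.

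There is no real obstacle here: the only thing to verify is that Lemma \ref{lem:2L} is applicable componentwise, which is immediate from the assumption that each $f_i$ (not merely $f$) is convex and $L$-smooth. The work is entirely mechanical once one recognizes that averaging a pointwise inequality preserves the inequality and that the inner-product term linearizes under the sum.
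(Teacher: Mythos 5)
Your proposal is exactly the paper's own argument: the paper states that Lemma \ref{lem:varcan} follows by applying Lemma \ref{lem:2L} to each $f_i$ individually and averaging, which is precisely what you do. The verification that the hypotheses hold componentwise and the linearization of the inner-product term are both correct, so there is nothing to add.
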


\noindent Lemma \ref{lem:varcan} is obtained by applying Lemma \ref{lem:2L} to each $f_i$ and averaging.

\begin{lemma}
\label{lem:prox}
Suppose $g$ is $\mu$-strongly convex with $\mu \ge 0$, and let $z = \prox_{\eta g} ( x - \eta d )$ for some $x, d \in \R^p$ and $\eta > 0$. Then, for any $y \in \R^p$,
\beq
\textstyle \eta \langle d, z - y \rangle 
\le \tfrac{1}{2} \|x - y\|^2 - \tfrac{1 + \mu \eta}{2} \|z - y\|^2 - \tfrac{1}{2} \|z - x\|^2 - \eta g(z) + \eta g(y).
\eeq
\end{lemma}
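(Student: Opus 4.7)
The plan is to exploit the first-order optimality characterization of the proximal map to produce a subgradient of $g$ at $z$, then combine the strong convexity inequality for $g$ with the polarization identity.

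First I would use the fact noted just after equation \eqref{eq:prox}: since $z = \prox_{\eta g}(x - \eta d)$, we have $(x - \eta d) - z \in \eta \partial g(z)$, so $v \defeq \tfrac{1}{\eta}(x - z) - d$ is a subgradient of $g$ at $z$. This is the only place the definition of the proximal map enters.

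Next, I would apply $\mu$-strong convexity of $g$ in the form
\beq
g(y) \ge g(z) + \langle v, y - z \rangle + \tfrac{\mu}{2}\|y - z\|^2 ,
\eeq
rearrange to get $\langle v, z - y \rangle \ge g(z) - g(y) + \tfrac{\mu}{2}\|z - y\|^2$, multiply by $\eta > 0$, and substitute the explicit formula for $v$. This yields
\beq
\langle x - z, z - y \rangle - \eta \langle d, z - y \rangle \ge \eta g(z) - \eta g(y) + \tfrac{\mu\eta}{2}\|z - y\|^2 ,
\eeq
which after moving the $\eta \langle d, z - y \rangle$ term to the right-hand side is already the desired inequality, except that the inner product $\langle x - z, z - y \rangle$ must be re-expressed as squared norms.

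Finally, I would apply the polarization identity $\langle x - z, z - y \rangle = \tfrac{1}{2}\|x - y\|^2 - \tfrac{1}{2}\|z - x\|^2 - \tfrac{1}{2}\|z - y\|^2$, and combine the $-\tfrac{1}{2}\|z-y\|^2$ term with the $-\tfrac{\mu\eta}{2}\|z-y\|^2$ term from strong convexity to produce the coefficient $-\tfrac{1+\mu\eta}{2}\|z-y\|^2$. There is no real obstacle here; the only subtlety is keeping track of the sign when inverting $\langle v, y - z \rangle$ to $\langle v, z - y \rangle$ and then consistently orienting the polarization identity so that $\|x - y\|^2$ appears with the correct sign. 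Strong convexity is used with a possibly vanishing $\mu \ge 0$, so the case $\mu = 0$ (merely convex $g$) is covered automatically by the same argument.
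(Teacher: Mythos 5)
Your proof is correct and follows essentially the same route as the paper: extract the subgradient $\tfrac{1}{\eta}(x-z) - d \in \partial g(z)$ from the proximal characterization, apply $\mu$-strong convexity of $g$, and expand the inner product $\langle x - z, z - y\rangle$ via the polarization identity. The only difference is cosmetic (the direction in which the strong-convexity inequality is written).
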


\begin{proof}
By the strong convexity of $g$,
\beq
g(z) - g(y) 
\le \langle \xi, z - y \rangle - \tfrac{\mu}{2} \|z - y\|^2 ,\quad \forall \xi \in \partial g(z) .
\eeq
From the definition of the proximal operator, we have that $\frac{1}{\eta} (x - z) - d \in \partial g(z)$. Therefore,
\beq
\begin{aligned}
\textstyle g(z) - g(y) 
&\textstyle \le \langle \xi, z - y \rangle - \tfrac{\mu}{2} \|z - y\|^2 \\
& =\textstyle \tfrac{1}{\eta} \langle x - z - \eta d, z - y \rangle - \tfrac{\mu}{2} \|z - y\|^2 \\
& =\textstyle - \iprod{ d }{ z - y } + \tfrac{1}{\eta} \iprod{ x - z}{ z - y } - \tfrac{\mu}{2} \|z - y\|^2 \\
& =\textstyle - \iprod{ d }{ z - y } - \tfrac{1}{2\eta} \norm{x-z}^2 - \tfrac{1}{2\eta} \norm{z-y}^2 + \tfrac{1}{2\eta} \norm{x-y}^2 - \tfrac{\mu}{2} \|z - y\|^2 .
\end{aligned}
\eeq
Multiplying by $\eta$ and rearranging yields the assertion.
\end{proof}

The next lemma is an analogue of the descent lemma for gradient descent when the gradient is replaced with an arbitrary vector $d$.

\begin{lemma}
\label{lem:descent}
Suppose $g$ is $\mu$-strongly convex for $\mu \ge 0$, and let $z = \prox_{\eta g} ( x - \eta d )$. The following inequality holds for any $\lambda > 0$.
\beq
\textstyle 0 \le \eta (F(x) - F(z)) + \tfrac{\eta}{2 L \lambda} \|d - \nabla f(x)\|^2 + (\tfrac{\eta L (\lambda + 1)}{2} - \tfrac{2 + \mu \eta}{2} ) \|z - x\|^2 .
\eeq

\end{lemma}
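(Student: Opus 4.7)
The plan is to combine three standard pieces: (i) Lemma \ref{lem:prox} applied at the point $y=x$, which handles the non-smooth part $g$ and exploits the proximal identity; (ii) the standard descent inequality $f(z) \le f(x) + \iprod{\nabla f(x)}{z-x} + \tfrac{L}{2}\|z-x\|^2$ coming from $L$-smoothness of $f$; and (iii) Young's inequality to separate a cross-term into a gradient-error part and a $\|z-x\|^2$ part with a tunable constant $\lambda$.

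First I would invoke Lemma \ref{lem:prox} with $y = x$. Since $\|x-y\|^2 = 0$, this yields
\begin{equation*}
\eta \iprod{d}{z-x} \le -\tfrac{2+\mu\eta}{2}\|z-x\|^2 - \eta g(z) + \eta g(x),
\end{equation*}
which already encodes both the proximal step and the strong convexity of $g$. Next I would add the $L$-smoothness descent inequality for $f$ multiplied by $\eta$, namely $\eta f(z) - \eta f(x) \le \eta\iprod{\nabla f(x)}{z-x} + \tfrac{\eta L}{2}\|z-x\|^2$. Summing these two estimates (so that the $g$ and $f$ terms combine to $F$) and moving the inner products to one side gives
\begin{equation*}
\eta\bigl(F(z)-F(x)\bigr) \le \eta\iprod{\nabla f(x) - d}{z-x} + \tfrac{\eta L - (2+\mu\eta)}{2}\|z-x\|^2.
\end{equation*}

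Finally I would handle the cross term via Young's inequality in the form $\iprod{a}{b} \le \tfrac{1}{2L\lambda}\|a\|^2 + \tfrac{L\lambda}{2}\|b\|^2$ with $a = \nabla f(x) - d$ and $b = z-x$. This produces exactly the $\tfrac{\eta}{2L\lambda}\|d-\nabla f(x)\|^2$ term and contributes an extra $\tfrac{\eta L\lambda}{2}\|z-x\|^2$, which combines with the existing $\tfrac{\eta L}{2}\|z-x\|^2$ to give the coefficient $\tfrac{\eta L(\lambda+1)}{2} - \tfrac{2+\mu\eta}{2}$. A final rearrangement moving everything to the right-hand side yields the claimed inequality.

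There is no real obstacle here; the only subtlety is choosing the Young's inequality weights so that the gradient-error term appears with coefficient $\tfrac{\eta}{2L\lambda}$, which is why the scaling $\tfrac{1}{2L\lambda}$ versus $\tfrac{L\lambda}{2}$ is used rather than a symmetric split. The hypothesis $\mu \ge 0$ is used only through Lemma \ref{lem:prox}, so the $\mu = 0$ case (no strong convexity) follows as a special case without any extra work.
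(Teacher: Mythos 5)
Your proof is correct and follows essentially the same route as the paper: apply Lemma \ref{lem:prox} with $y=x$, combine with the $L$-smoothness descent estimate for $f$ to form $F$, and split the cross term $\eta\iprod{\nabla f(x)-d}{z-x}$ with a weighted Young's inequality to obtain the $\lambda$-dependent coefficients. The paper merely packages the same three steps starting from the identity $0 = \eta\iprod{d}{x-z} + \eta\iprod{d}{z-x}$; the content and intermediate bounds are identical.
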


\begin{proof}
This follows immediately from Lemma \ref{lem:prox}.
\beq
\begin{aligned}
\textstyle 0 
&\textstyle = \eta \langle d, x - z \rangle + \eta \langle d, z - x \rangle \\
&\textstyle \symnum{1}{\le} \eta \langle d, x - z \rangle - \tfrac{2 + \mu \eta}{2} \|z - x\|^2 + \eta \pa{ g(x) - g(z) } \\
&\textstyle = \eta \langle \nabla f(x), x - z \rangle + \eta \langle d - \nabla f(x), x - z \rangle - \tfrac{2 + \mu \eta}{2} \|z - x\|^2 + \eta \pa{ g(x) - g(z) } \\
&\textstyle \symnum{2}{\leq} \eta ( F(x) - F(z) ) + \eta \langle d - \nabla f(x), x - z \rangle + (\tfrac{\eta L}{2} - \tfrac{2 + \mu \eta}{2} ) \|z - x\|^2 \\
&\textstyle \symnum{3}{\leq} \eta ( F(x) - F(z) ) + \tfrac{\eta}{2 L \lambda} \|d - \nabla f(x)\|^2 + (\tfrac{\eta L (\lambda + 1)}{2} - \tfrac{2 + \mu \eta}{2} ) \|z - x\|^2.
\end{aligned}
\eeq
Inequality \numcirc{1} is due to Lemma \ref{lem:prox} with $y = x$, \numcirc{2} is due to the Lipschitz continuity of $\nabla f_i$, and \numcirc{3} is Young's.
\end{proof}

The previous two lemmas require $g$ to be convex. Similar results hold in the non-convex case as well.

\begin{lemma}
\label{lem:nonconprox}
    Let $z = \prox_{\eta g} ( x - \eta d )$ for some $x, d \in \R^p$ and $\eta > 0$. Then, for any $y \in \R^p$,
    \beq
    \textstyle \eta \langle d, z - y \rangle 
    \le \tfrac{1}{2} \|x - y\|^2 - \tfrac{1}{2} \|z - x\|^2 - \eta g(z) + \eta g(y).
    \eeq
\end{lemma}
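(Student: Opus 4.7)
The plan is to use the variational characterization of the proximal operator as a minimizer directly, bypassing subdifferential calculus entirely. This is essential because the convexity of $g$ used in Lemma \ref{lem:prox} (via the subgradient inequality) is no longer available. Fortunately, even without convexity, the point $z = \prox_{\eta g}(x - \eta d)$ still satisfies the global minimizer inequality from the definition \eqref{eq:prox}:
\begin{equation*}
\eta g(z) + \tfrac{1}{2}\|z - (x - \eta d)\|^2 \;\le\; \eta g(y) + \tfrac{1}{2}\|y - (x - \eta d)\|^2 \quad \text{for every } y \in \R^p.
\end{equation*}

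The next step is to expand both squared norms, noting that the $\tfrac{\eta^2}{2}\|d\|^2$ terms cancel:
\begin{equation*}
\tfrac{1}{2}\|z - x\|^2 + \eta \langle d, z - x \rangle + \eta g(z) \;\le\; \tfrac{1}{2}\|y - x\|^2 + \eta \langle d, y - x \rangle + \eta g(y).
\end{equation*}
Rearranging so that the inner-product terms combine into $\eta \langle d, z - y \rangle$ yields exactly the claimed inequality
\begin{equation*}
\eta \langle d, z - y \rangle \;\le\; \tfrac{1}{2}\|x - y\|^2 - \tfrac{1}{2}\|z - x\|^2 - \eta g(z) + \eta g(y).
\end{equation*}

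There is essentially no obstacle here: the result is one line of algebra after invoking the definition of $\prox_{\eta g}$. The only subtlety worth noting is that the convex proof of Lemma \ref{lem:prox} produces an extra $\tfrac{1}{2}\|z-y\|^2$ term on the right-hand side (coming from the cocoercivity of the prox map when $g$ is convex), which is precisely the term absent from this non-convex version; this explains why Lemma \ref{lem:nonconprox} is strictly weaker and consistent with setting $\mu = 0$ in Lemma \ref{lem:prox} while additionally dropping the $\tfrac{1}{2}\|z-y\|^2$ contribution.
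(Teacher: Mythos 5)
Your proof is correct and is essentially the same as the paper's: both invoke the minimizer characterization of $\prox_{\eta g}$, compare objective values at $z$ and at $y$, expand the quadratic around $x - \eta d$ (the paper presents this as an equivalent $\argmin$ over $\langle d, v - x\rangle + \tfrac{1}{2\eta}\|v-x\|^2 + g(v)$, but the cancellation of the $\tfrac{\eta^2}{2}\|d\|^2$ term is the same step), and rearrange. One small imprecision in your closing remark: the extra $\tfrac{1}{2}\|z-y\|^2$ in Lemma \ref{lem:prox} comes from the three-point identity applied to the subgradient optimality condition $\tfrac{1}{\eta}(x-z)-d\in\partial g(z)$, which is available only when $g$ is convex; this is not cocoercivity of the prox map as such, though your underlying point that convexity yields the strictly sharper inequality is correct.
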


\begin{proof}
    By the Lipschitz continuity of $\nabla f$, we have the inequalities
    \begin{equation}
    \begin{aligned}
        f(x) - f(y) &\textstyle  \le \langle \nabla f(x), x - y \rangle + \frac{L}{2} \|x - y\|^2, \\
        f(z) - f(x) &\textstyle  \le \langle \nabla f(x), z - x \rangle + \frac{L}{2} \|z - x\|^2.
    \end{aligned}
    \end{equation}
    Furthermore, by the definition of $z$,
    \begin{equation}
        z \in \argmin_v \left\{ \langle d, v - x \rangle + \frac{1}{2 \eta} \|v - x\|^2 + g (v) \right\}.
    \end{equation}
    Taking $v = y$, we obtain
    \begin{equation}
        \textstyle g(z) - g(y) \le \langle d, y - z \rangle + \frac{1}{2 \eta} \left( \|x - y\|^2 - \|x - z\|^2 \right).
    \end{equation}
    Adding these three inequalities and multiplying by $\eta$ completes the proof.
\end{proof}

\begin{lemma}
\label{lem:noncon}
    Let $z = \prox_{\eta g} ( x - \eta d )$. Then
    \begin{equation}
        \textstyle F(z) \le F(y) + \langle \nabla f(x) - d, z - y \rangle + ( \frac{L}{2} - \frac{1}{2 \eta} ) \|x - z\|^2 + ( \frac{L}{2} + \frac{1}{2 \eta} ) \|x - y\|^2.
    \end{equation}
\end{lemma}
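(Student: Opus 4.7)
My plan is to mirror the structure of Lemma \ref{lem:nonconprox}: bound $g(z)$ via the proximal inequality and bound $f(z)$ via two applications of the quadratic-upper-bound/lower-bound for Lipschitz-gradient functions, then sum.

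First, I would invoke Lemma \ref{lem:nonconprox} directly, which (after dividing through by $\eta$) supplies
\begin{equation}
g(z) \le g(y) + \langle d, y-z \rangle + \tfrac{1}{2\eta}\|x-y\|^2 - \tfrac{1}{2\eta}\|z-x\|^2.
\end{equation}
This contributes exactly the $\tfrac{1}{2\eta}$-terms on both squared distances with the correct signs, and also introduces an inner product $\langle d, y-z\rangle$ that will combine with a $\langle \nabla f(x),z-y\rangle$ term from the $f$ part.

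Next, I would bound $f(z) - f(y)$ using only the $L$-Lipschitz continuity of $\nabla f$. Applying the descent lemma at $x$ to $z$ gives $f(z) \le f(x) + \langle \nabla f(x), z-x\rangle + \tfrac{L}{2}\|z-x\|^2$. The companion inequality I need is $f(x) - f(y) \le \langle \nabla f(x), x-y\rangle + \tfrac{L}{2}\|x-y\|^2$; this is the point worth flagging, since it is not the descent lemma in its usual form but the other side of $|f(y) - f(x) - \langle \nabla f(x), y-x\rangle| \le \tfrac{L}{2}\|y-x\|^2$, which holds under Lipschitz gradient without any convexity. (It is the same rearrangement already used silently in the proof of Lemma \ref{lem:nonconprox}.) Adding these two inequalities telescopes to
\begin{equation}
f(z) - f(y) \le \langle \nabla f(x), z-y\rangle + \tfrac{L}{2}\|z-x\|^2 + \tfrac{L}{2}\|x-y\|^2.
\end{equation}

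Finally, I would add the two displays. The inner-product terms combine as $\langle \nabla f(x), z-y\rangle + \langle d, y-z\rangle = \langle \nabla f(x) - d, z-y\rangle$, and the squared-distance terms assemble into the claimed coefficients $\tfrac{L}{2} - \tfrac{1}{2\eta}$ on $\|x-z\|^2$ and $\tfrac{L}{2} + \tfrac{1}{2\eta}$ on $\|x-y\|^2$. There is no real obstacle here beyond keeping track of signs; the one step that could trip a reader is the use of the Lipschitz-gradient lower bound for $f(x) - f(y)$ rather than the standard descent lemma upper bound, so I would make sure to cite the two-sided estimate explicitly.
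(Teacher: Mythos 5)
Your proposal is correct and matches the paper's proof essentially step for step: two quadratic bounds from Lipschitz continuity of $\nabla f$ (at $x$, applied to $z$ and to $y$), the proximal inequality from Lemma \ref{lem:nonconprox} for $g$, then sum. The point you flag — that the bound $f(x) - f(y) \le \langle \nabla f(x), x - y\rangle + \tfrac{L}{2}\|x-y\|^2$ comes from the two-sided estimate $|f(v) - f(u) - \langle\nabla f(u), v-u\rangle| \le \tfrac{L}{2}\|v-u\|^2$ and needs no convexity — is a correct and worthwhile clarification the paper leaves implicit.
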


\begin{proof}
    By the Lipschitz continuity of $\nabla f$, we have the inequalities
    \begin{equation}
    \begin{aligned}
        f(x) - f(y) &\textstyle  \le \langle \nabla f(x), x - y \rangle + \frac{L}{2} \|x - y\|^2, \\
        f(z) - f(x) &\textstyle  \le \langle \nabla f(x), z - x \rangle + \frac{L}{2} \|z - x\|^2.
    \end{aligned}
    \end{equation}
    Furthermore, by Lemma \ref{lem:nonconprox},
    \begin{equation}
        \textstyle g(z) - g(y) \le \langle d, y - z \rangle + \frac{1}{2 \eta} \left( \|x - y\|^2 - \|x - z\|^2 \right).
    \end{equation}
    Adding these inequalities together completes the proof.
\end{proof}

In the non-convex setting, to measure convergence of the sequence to a first-order stationary point, we use the notion of a generalized gradient \cite{nest2004}.

\begin{definition}[{Generalized gradient map}]
\label{def:gengrad}
The {generalized gradient map} is defined as
\beqn
\textstyle \mathcal{G}_{\eta_k} (x_k) \defeq \tfrac{1}{\eta_k} \pa{ x_k - \prox_{\eta_k g} \pa{ x_k - \eta_k \nabla f(x_k) } } .
\eeqn
\end{definition}

When $g \equiv 0$, we have $\mathcal{G}_{\eta_k}(x_k) = \nabla f(x_k) \to 0$ if the sequence $\{\xk\}$ converges to some $\xsol \in \R^p$ such that $\nabla f(\xsol) = 0$. 
For nontrivial $g$, suppose $\inf_{k} \eta_k > 0$ and $\xk$ converges to some $\xsol$ such that $\xsol = \prox_{\eta g} \pa{ \xsol - \eta \nabla f(\xsol) }$, then $\mathcal{G}_{\eta_k}(x) \to 0$. 
Such a point $\xsol$ is called \emph{first-order stationary point} of \eqref{eq:mainopt} and an $\epsilon$-\emph{first-order stationary point} is a point satisfying $\|\mathcal{G}_{\eta}(x)\| \le \epsilon$ for some $\eta > 0$.

\section{A bias-variance tradeoff in stochastic gradient methods}\label{sec:bvtradeoff}

In this section, we discuss the effect of the bias and variance of a stochastic gradient estimator on the performance of Algorithm \ref{alg:general_sgd}. It is elementary that the mean-squared error (MSE) of a stochastic estimator can be decomposed into the sum of its variance and squared bias. In our setting,
\beq\label{eq:msedecomp}
\Ek \ca{ \|\tnablak{k} - \nabla f(x_k)\|^2 } = \| \Ek [\tnablak{k}] - \nabla f(x_k) \|^2 + \Ek \ca{ \| \tnablak{k} - \Ek [\tnablak{k}] \|^2 } .
\eeq
This decomposition shows that a biased estimator might have a smaller MSE than an unbiased estimator as long as the bias sufficiently diminishes the variance. This is the \emph{bias-variance tradeoff}. As we see below, a bias-variance tradeoff exists in our analysis of stochastic gradient methods, but with a slightly different form.

In what follows, we first discuss the bias-variance tradeoff in the convex settings and then the non-convex setting.

\subsection{Convex case}

Let $\xsol$ be a global minimizer of problem \eqref{eq:mainopt}. 
From the update \eqref{eq:general_sgd}, let $w_{k+1} \in \partial g(\xkp)$. We have the following bound on the suboptimality at $x_{k+1}$:
\beq\label{eq:framework}
\begin{aligned}
&\Ek [ F (x_{k+1}) - F(\xsol) ] \\
& = \Ek [ f(x_{k+1}) - f(x_k) + f(x_k) - f(\xsol) + g(x_{k+1}) - g(\xsol) ] \\
&\symnum{1}{\le} \tfrac{L}{2} \Ek \ca{ \|x_{k+1} - x_k\|^2 } + \Ek \ca{ \langle \nabla f(x_k), x_{k+1} - x_k \rangle } + \langle \nabla f(x_k), x_k - \xsol \rangle + \Ek [g(x_{k+1}) - g(\xsol)] \\
&= \tfrac{L}{2} \Ek \ca{ \|x_{k+1} - x_k\|^2 } + \Ek \ca{ \langle \nabla f(x_k) - \tnablak{k}, x_{k+1} - x_k \rangle } \\
& \qquad + \Ek \ca{ \langle \nabla f(x_k) - \tnablak{k}, x_k - \xsol \rangle } + \Ek \ca{ \langle \tnablak{k}, x_{k+1} - \xsol \rangle } + \Ek [g(x_{k+1}) - g(\xsol)] \\
&\symnum{2}{\le} \tfrac{\epsilon}{2} \Ek \ca{ \| \nabla f(x_k) - \tnablak{k} \|^2 } + ( \tfrac{L}{2} + \tfrac{1}{2 \epsilon} ) \Ek \ca{ \|x_{k+1} - x_k \|^2 } \\
& \qquad + \Ek \ca{ \langle \nabla f(x_k) - \tnablak{k}, x_k - \xsol \rangle } + \Ek \ca{ \langle \tnablak{k} + w_{k+1}, x_{k+1} - \xsol \rangle - \tfrac{\mu}{2} \|x_{k+1} - x^*\|^2 } \\
&\symnum{3}{\le} \tfrac{\epsilon}{2} \Ek \ca{ \| \nabla f(x_k) - \tnablak{k} \|^2 } + ( \tfrac{L}{2} + \tfrac{1}{2 \epsilon} - \tfrac{1}{2 \eta} ) \Ek \ca{ \|x_{k+1} - x_k \|^2 } \\
& \qquad + \Ek \ca{ \langle \nabla f(x_k) - \tnablak{k}, x_k - \xsol \rangle } - \tfrac{1 + \mu \eta}{2 \eta} \Ek \ca{ \|x_{k+1} - \xsol \|^2 } + \tfrac{1}{2 \eta} { \|x_k - \xsol \|^2 } .
\end{aligned}
\eeq
Inequality $\raisebox{.5pt}{\footnotesize \textcircled{\raisebox{-.9pt} {1}}}$ follows from the convexity of $f$ and Lipschitz continuity of $\nabla f$, $\raisebox{.5pt}{\footnotesize \textcircled{\raisebox{-.9pt} {2}}}$ follows from the (strong) convexity of $g$, and $\raisebox{.5pt}{\footnotesize \textcircled{\raisebox{-.9pt} {3}}}$ comes from the implicit definition of the proximal operator \eqref{eq:prox}.
For the last line of the inequality, we observe that the inner product term $\Ek \ca{ \iprod{ \nabla f(x_k) - \tnablak{k} }{ x_k - \xsol} }$ vanishes when $\tnablak{k}$ is an unbiased estimate of $\nabla f(\xk)$. When the estimator is biased, we must develop a new way to control this term, together with $\Ek \ca{\norm{\nabla f(\xk) - \tnablak{k}}^2}$.

Hence, the following terms arise in our convergence analysis from the bias and the variance of the gradient estimator:
\beq\label{eq:tradeoff}
\begin{aligned}
\textrm{Bias} &: \Ek \ca{ \iprod{ \nabla f(x_k) - \tnablak{k} }{ x_k - \xsol} } \enskip\textrm{and}\enskip \| \Ek [\tnablak{k}] - \nabla f(x_k) \|^2 , \\
\textrm{Variance} &: \Ek \ca{ \| \tnablak{k} - \Ek [\tnablak{k}] \|^2 } . 
\end{aligned}
\eeq

\begin{remark}[Non-composite case ${g=0}$]
When $g = 0$, for gradient descent, the descent property of $f$ yields 
\beq
\begin{aligned}
f(x_{k+1}) - f(\xsol) 
&\leq \pa{ \tfrac{L}{2} - \tfrac{1}{\eta} } \|x_{k+1} - x_k \|^2 + f(x_k) - f(\xsol) ,
\end{aligned}
\eeq
where $\eta \leq 2/L$. 
For stochastic gradient descent, we obtain the following relationship:
\beq
\label{eq:framework1}
\begin{aligned}
&\Ek [ f(x_{k+1}) - f(\xsol) ] \\
&= \Ek \ca{ f(x_{k+1}) - f(x_k) + f(x_k) - f(\xsol) } \\
&\le \Ek \ca{ \langle \nabla f(x_k) - {\tnablak{k}} , x_{k+1} - x_k \rangle } + \pa{ \tfrac{L}{2} - \tfrac{1}{\eta} } \Ek \ca{ \|x_{k+1} - x_k \|^2 } + f(x_k) - f(\xsol) \\
&\le \tfrac{\epsilon}{2} \Ek \ca{ \| \nabla f(x_k) - \tnablak{k} \|^2 } + \pa{ \tfrac{L}{2} + \tfrac{1}{2 \epsilon} - \tfrac{1}{\eta} } \Ek \ca{ \|x_{k+1} - x_k \|^2 } + f(x_k) - f(\xsol) .
\end{aligned}
\eeq
Compared to \eqref{eq:framework}, there is no inner product term in \eqref{eq:framework1}, which makes the analysis of the non-composite case much simpler. This is one reason why biased algorithms have been successfully studied in non-composite setting, but not in the composite setting.
\end{remark}

\subsection{Non-convex case}

The influence of bias is simpler in the non-convex setting and independent of $g$, which explains why biased algorithms have recently found success for these problems. To begin, let $\hat{x}_{k+1} \defeq \prox_{\eta / 2 g} \pa{ x_k - \eta / 2 \nabla f(x_k) }$. Applying Lemma \ref{lem:noncon} with $z = \hat{x}_{k+1}$, $y = x = x_k$ and $d = \nabla f(x_k)$, we have
\begin{equation}
    \textstyle F(\hat{x}_{k+1}) \le F(x_k) + ( \frac{L}{2} - \frac{1}{\eta} ) \|\hat{x}_{k+1} - x_k\|^2.
\end{equation}
Again, applying Lemma \ref{lem:noncon} with $z = x_{k+1}$, $y = \hat{x}_{k+1}$, $x = x_k$, and $d = \tnabla_k$, we obtain
\beqs
\begin{aligned}
F(x_{k+1}) &\le \textstyle  F(\hat{x}_{k+1}) + \langle \nabla f(x_k) - \tnabla_k, x_{k+1} - \hat{x}_{k+1} \rangle + ( \frac{L}{2} - \frac{1}{2 \eta} ) \|x_{k+1} - x_k\|^2 \\
&\textstyle \qquad + (\frac{L}{2} + \frac{1}{2 \eta} ) \|\hat{x}_{k+1} - x_k\|^2
\end{aligned}
\eeqs
Adding these two inequalities together gives
\beq\label{eq:saganoncon}
\begin{aligned}
F(x_{k+1}) 
&\le F(x_k) + \pa{ L - \tfrac{1}{2 \eta} } \|\hat{x}_{k+1} - x_k\|^2 + ( \tfrac{L}{2} - \tfrac{1}{2 \eta} ) \|x_{k+1} - x_k\|^2 \\
&\qquad + \langle \nabla f(x_k) - \tnablak{k}, x_{k+1} - \hat{x}_{k+1} \rangle \\
&\symnum{1}{\le} F(x_k) + \pa{ L - \tfrac{1}{2 \eta} } \|\hat{x}_{k+1} - x_k\|^2 + \pa{ \tfrac{L}{2} - \tfrac{1}{2 \eta} } \|x_{k+1} - x_k\|^2 + 2 \eta \| \nabla f(x_k) - \tnablak{k} \|^2\\
& \quad + \frac{1}{8 \eta} \|\hat{x}_{k+1} - x_{k+1}\|^2 \\
&\symnum{2}{\le} F(x_k) + \pa{ L - \tfrac{1}{4 \eta} } \|\hat{x}_{k+1} - x_k\|^2 + \pa{ \tfrac{L}{2} - \tfrac{1}{4 \eta} } \|x_{k+1} - x_k\|^2 + 2 \eta \| \nabla f(x_k) - \tnablak{k} \|^2.
\end{aligned}
\eeq
Inequality \numcirc{1} is Young's, and \numcirc{2} is the standard inequality $\|a - c\|^2 \le 2 \|a - b\|^2 + 2 \|b - c\|^2$. In the non-convex case, the inner-product bias term does not appear, so the bias-variance tradeoff is the classical one.

\subsection{General bounds on bias and variance}

To ensure convergence for a particular gradient estimator used in Algorithm \ref{alg:general_sgd}, we must bound the inner-product bias term $\Ek \ca{ \langle \nabla f(x_k) - \tnablak{k}, x_k - \xsol \rangle } $ and the MSE $\Ek \ca{ \| \nabla f(x_k) - \tnablak{k} \|^2 }$. Below we introduce general bounds on these terms that allow us to establish convergence rates for a variety of gradient estimators. The first of these is a bound on the MSE term.

\begin{definition}[Bounded MSE]\label{def:bmse}
The stochastic gradient estimator $\tnabla$ is said to satisfy the \emph{BMSE}$(M_1,M_2,\rho_M,$ $\rho_F,m)$ property with parameters $M_1,M_2 \ge 0$, $\rho_M,\rho_F \in (0,1]$ and $m \ge 1$ if there exist sequences $\calM_k$ and $\calF_k$ such that
\beq
\sum_{k = m s}^{m (s+1) - 1} \E \ca{ \|\tnablak{k} - \nabla f(x_k)\|^2 } 
\le \calM_{m s} ,
\eeq
and the following bounds hold: 
\beq\label{eq:mseb}
\begin{aligned}
\calM_{m s} 
&\le (1-\rho_M)^m \calM_{m (s-1)} + \calF_{m s} + \tfrac{M_1}{n} \sum_{k = m s}^{m(s+1)-1} \sum_{i=1}^n \E \ca{ \|\nabla f_i (x_{k+1}) - \nabla f_i(x_k)\|^2 } , \\
\calF_{m s} 
&\le \sum_{\ell = 0}^s \tfrac{M_2 (1-\rho_F)^{m (s - \ell)} }{n} \sum_{k = m s}^{m (s + 1)-1} \sum_{i=1}^n \E \ca{ \|\nabla f_i (x_{k+1}) - \nabla f_i(x_k)\|^2 } .
\end{aligned}
\eeq
\end{definition}

The constant $m$ is the epoch length of the gradient estimator, hence it is usually set to be $\calO\pa{n}$. The BMSE property allows these bounds to hold only on average over an epoch. 
This property is useful in convergence analyses because it bounds the MSE by a geometrically decaying sequence $\seq{\calM_{m k}}$ and a component that is proportional to the one-iteration progress of gradient descent $(1 / n \sum_{i=1}^n \|\nabla f_i(x_{k+1}) - \nabla f_i(x_k)\|^2)$.

\begin{remark}$~$
\begin{itemize}[itemsep=-1pt,topsep=2pt]

\item Most variance-reduced stochastic gradient estimators satisfy the BMSE property, including SAG, SAGA, SVRG, SARAH, and all the estimators in \cite{neighbors}. SGD does not satisfy this property, as its variance does not decay along the iterations.

\item Most existing work on the analysis of general stochastic gradient algorithms enforce bounds of this form on either the MSE or the moments of the stochastic estimator, with the crucial difference that existing works require the bounds to (i.e., dependent on only the previous iteration) \cite{nocedalreview}. In contrast, the BMSE property allows non-Markovian MSE bounds through the sequence $\calF_k$. This relaxation is crucial for the analysis of our new gradient estimator, SARGE.
\end{itemize}
\end{remark}

In order to bound the inner-product bias term, we require the gradient estimator to admit a certain structure in its bias. In biased estimators such as SAG, the bias depends on the stored gradient values:
\beq
\textstyle \nabla f(x_k) - \Ek \ca{ \gradSAG_{k} } = \pa{1 - \tfrac{1}{n}} \Pa{ \nabla f(x_k) - \tfrac{1}{n} \sum_{i=1}^n \nabla f_i(\varphi_k^i) }.
\eeq
We call estimators whose bias admits the above structure \emph{memory-biased} gradient estimators. These include SAG, and more generally B-SAGA and B-SVRG.

\begin{definition}[Memory-biased gradient estimator]
The stochastic gradient estimator $\tnabla$ is \emph{memory-biased} with parameters $\theta > 0$, $B_1 \ge 0$, and $m \ge 1$ if
\beq
\nabla f(x_k) - \Ek \ca{ \tnablak{k} } 
= \pa{ 1 - \tfrac{1}{\theta} } \bPa{ \nabla f(x_k) - \sfrac{1}{n} \sum_{i=1}^n \nabla f_i(\varphi_k^i)},
\eeq
for some $\{\varphi_k^i\}_{i=1}^n \subset \{ x_\ell \}_{\ell=0}^{k-1}$, and for any $s \in \mathbb{N}_0$,
\beq
\label{eq:membias}
\sum_{k = m s}^{m (s+1) - 1} \tfrac{1}{n} \sum_{i=1}^n \E \ca{ \|x_k - \varphi_k^i\|^2 } 
\le B_1 \sum_{k = m s}^{m (s+1)-1} \E \ca{ \| x_k - x_{k-1} \|^2 } .
\eeq
\end{definition}

B-SAGA is clearly a memory-biased estimator, and so is B-SVRG where $\varphi_k^i = \varphi_{m s}^i$ for all $k$ in epoch $s$. The parameter $\theta$ controls the amount of bias in the estimator, and $B_1$, in a sense, measures how ``stale'' the stored gradient information is. For memory-biased gradient estimators, the bias-term can be handled easily.
\begin{lemma}
\label{lem:couple1main}
Suppose $\tnabla$ is memory-biased with parameter $\theta \ge 1$ and that $F$ is $\mu$-strongly convex with $\mu \ge 0$. For any $\lambda > 0$, the following inequality holds:
\beqs
\begin{aligned}
\eta \Ek [ F(x_{k+1}) - F(\xsol) ] 
&\le \tfrac{\eta}{2 L \lambda} \Ek \ca{ \|\tnablak{k} - \nabla f(x_k) \|^2 } - \tfrac{1 + \mu \eta}{2} \Ek \ca{ \|x_{k+1} - \xsol\|^2 } + \tfrac{1}{2} \|x_k - \xsol\|^2 \\
&\qquad + \pa{ \tfrac{\eta L ( \lambda + 1 )}{2} - \tfrac{1}{2} } \Ek \ca{ \|x_{k+1} - x_k\|^2 } + \tfrac{\eta L}{2 n} \pa{ 1 - \tfrac{1}{\theta} } \sum_{i=1}^n \|x_k - \varphi_k^i\|^2.
\end{aligned}
\eeqs
\end{lemma}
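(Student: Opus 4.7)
The plan is to bound $\eta\Ek[F(x_{k+1}) - F(\xsol)]$ by assembling three classical ingredients (convexity of $f$, the $L$-smooth descent inequality, and the proximal inequality of Lemma \ref{lem:prox}) and then controlling the bias separately through the memory-biased structure.

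First, I would apply Lemma \ref{lem:prox} with $x = x_k$, $z = x_{k+1}$, $y = \xsol$, and $d = \tnablak{k}$, producing the $\tfrac{1}{2}\|x_k - \xsol\|^2$ term, the $-\tfrac{1+\mu\eta}{2}\|x_{k+1} - \xsol\|^2$ term, and a $-\tfrac{1}{2}\|x_{k+1}-x_k\|^2$ contribution (with the strong convexity absorbed into $g$). Combining this with the $L$-Lipschitz descent inequality for $f(x_{k+1}) - f(x_k)$ and convexity of $f$ for $f(x_k) - f(\xsol)$, I obtain
$$\eta(F(x_{k+1}) - F(\xsol)) \le \eta\langle \nabla f(x_k) - \tnablak{k}, x_{k+1} - \xsol\rangle + \pa{\tfrac{\eta L}{2} - \tfrac{1}{2}}\|x_{k+1}-x_k\|^2 + \tfrac{1}{2}\|x_k - \xsol\|^2 - \tfrac{1+\mu\eta}{2}\|x_{k+1} - \xsol\|^2.$$
Taking $\Ek$ and splitting $x_{k+1} - \xsol = (x_{k+1} - x_k) + (x_k - \xsol)$, the error inner product decomposes into a variance piece $\Ek\langle \nabla f(x_k) - \tnablak{k}, x_{k+1} - x_k\rangle$ and a pure bias piece $\langle \nabla f(x_k) - \Ek[\tnablak{k}], x_k - \xsol\rangle$ (using $\mathcal{F}_k$-measurability of $x_k$ and $\xsol$). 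Young's inequality with parameter $1/(L\lambda)$ on the variance piece yields the $\tfrac{\eta}{2L\lambda}\Ek\|\tnablak{k} - \nabla f(x_k)\|^2$ term and upgrades the $\|x_{k+1}-x_k\|^2$ coefficient to $\tfrac{\eta L(\lambda+1)}{2} - \tfrac{1}{2}$, both matching the statement.

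The main obstacle is the bias piece. Substituting the memory-biased identity rewrites it as $\eta(1-1/\theta)(1/n)\sum_{i=1}^n\langle \nabla f_i(x_k) - \nabla f_i(\varphi_k^i), x_k - \xsol\rangle$, and the target allows it to be controlled only by $\tfrac{\eta L(1-1/\theta)}{2n}\sum_i\|x_k - \varphi_k^i\|^2$; crucially, no additional $\|x_k - \xsol\|^2$ dependence may be introduced since the coefficient on $\|x_k - \xsol\|^2$ on the RHS is already fixed at $\tfrac{1}{2}$ by the prox step. I expect this step to require a careful per-index combination of convexity of each $f_i$ with the $L$-smooth descent inequality applied at both $x_k$ and $\varphi_k^i$, so that the cross-terms in $x_k - \xsol$ that would otherwise appear cancel telescopically against the convexity bound used on $f(x_k) - f(\xsol)$ earlier in the derivation, leaving exactly the targeted $\tfrac{L}{2}\|x_k - \varphi_k^i\|^2$ contribution per index. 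Multiplying through by $\eta$ and collecting all pieces yields the statement of Lemma \ref{lem:couple1main}.
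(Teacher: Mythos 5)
Your plan starts from the same three ingredients the paper uses (the prox inequality of Lemma \ref{lem:prox}, the $L$-smooth descent inequality, and convexity), and the treatment of the variance piece via Young's with parameter $1/(L\lambda)$ is correct. The gap is in the final step. After applying plain convexity of $f$ at $x_k$ for the full weight, the leftover bias piece you obtain after taking $\Ek$ is
\begin{equation}
\eta\Pa{1-\tfrac{1}{\theta}}\tfrac{1}{n}\sum_{i=1}^n \iprod{\nabla f_i(x_k) - \nabla f_i(\varphi_k^i)}{x_k - \xsol},
\end{equation}
and you want to bound it by $\tfrac{\eta L}{2n}(1-\tfrac{1}{\theta})\sum_i \|x_k - \varphi_k^i\|^2$ with no residual $\|x_k - \xsol\|$ dependence. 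This cannot work as stated: take $f_i(x) = \tfrac12\|x\|^2$ and $x_k - \varphi_k^i = \epsilon v$ with $\iprod{v}{x_k - \xsol} > 0$, so the left side scales like $\epsilon$ while the target scales like $\epsilon^2$. The remedy you gesture at — a ``telescopic cancellation against the convexity bound used on $f(x_k)-f(\xsol)$'' — is not available once that convexity inequality has already been applied and absorbed into $\iprod{\nabla f(x_k) - \tnablak{k}}{x_{k+1}-\xsol}$; there is no longer a free $f(x_k) - f(\xsol)$ term to cancel against.

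The paper's proof avoids introducing the bias inner product at all. Rather than bounding $f(x_k) - f(\xsol)$ by convexity at $x_k$ with full weight, it starts from the weighted decomposition
\begin{equation}
\tfrac{\eta}{\theta}\pa{f(x_k)-f(\xsol)} + \tfrac{\eta}{n}\Pa{1-\tfrac{1}{\theta}}\sum_{i=1}^n\pa{f_i(\varphi_k^i) - f_i(\xsol)}
\end{equation}
and applies convexity at $x_k$ for the $1/\theta$ piece and at $\varphi_k^i$ for the $(1-1/\theta)/n$ piece per index. The memory-biased identity then makes the two gradient terms sum \emph{exactly} to $\iprod{\Ek[\tnablak{k}]}{x_k-\xsol}$, so Lemma \ref{lem:prox} applies directly and no $\|x_k-\xsol\|$-dependent bias term is left over. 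The residual from this weighted step — the difference $f(x_k) - \tfrac{1}{n}\sum_i f_i(\varphi_k^i) + \tfrac{1}{n}\sum_i\iprod{\nabla f_i(\varphi_k^i)}{\varphi_k^i - x_k}$ — is exactly what the $L$-smooth descent inequality controls by $\tfrac{L}{2n}\sum_i\|x_k - \varphi_k^i\|^2$, producing the last term of the lemma. To repair your proposal you would need to restructure the convexity step in exactly this way; as written, the plan has a step that does not close.
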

The proof of Lemma \ref{lem:couple1main} can be found in Appendix \ref{sec:proof_mem}. The bound of Lemma \ref{lem:couple1main} is analogous to the bound in \eqref{eq:framework}, but the inner-product bias term is replaced with $\tfrac{\eta L}{2 n} \pa{ 1 - \tfrac{1}{\theta} } \sum_{i=1}^n \|x_k - \varphi_k^i\|^2$. This term is proportional to the progress of gradient descent (by \eqref{eq:membias}), so this provides the necessary control over the inner-product bias term.

\vskip1em

For estimators such as SARAH, the bias depends on the error in the previous gradient estimate, rather than previous stochastic gradients:
\beq
\nabla f(x_k) - \Ek \ca{ \gradSARAH_k } = \nabla f(x_{k-1}) - \gradSARAH_{k-1}.
\eeq
We refer to estimators of this type as \emph{recursively biased}.

\begin{definition}[Recursively biased gradient estimator]\label{def:recursively_biased}
For any sequence $\{x_k\}$, let $\tnablak{k}$ be a stochastic gradient estimator generated from the points $\{x_{\ell}\}_{\ell = 0}^k$. This estimator is \emph{recursively biased} with parameters $\rho_B \in (0,1]$ and $\nu \ge 1$ if
\beq
\nabla f(x_k) - \Ek \ca{ \tnablak{k} } = \begin{cases}
0 & \textnormal{for } k \in \nu \mathbb{N}_0, \\
( 1 - \rho_B ) \pa{ \nabla f(x_{k-1}) - \tnabla_{k-1} } & \textnormal{o.w.}
\end{cases}
\eeq
\end{definition}
The parameter $\nu$ represents how many steps occur between full gradient evaluations. For SARGE, $\nu = \infty$ because the full gradient is never computed.

\begin{lemma}
\label{lem:bias}
Suppose $\tnabla$ is a recursively biased gradient estimator with parameters $\nu \ge 1$ and $\rho_B \in (0,1]$. Then, for any $\epsilon > 0$,
\beqs\label{thm1:bias}
\begin{aligned}
&\sum_{k = \nu s + 1}^{\nu (s + 1) - 1} | \E \langle \nabla f(x_{k-1}) - \tnabla_{k-1}, x_k - \xsol \rangle | \\
&\le \min \bBa{ \nu, \tfrac{1}{\rho_B} } \sum_{k = \nu s}^{\nu (s + 1) - 1} \E \Ca{ \tfrac{\epsilon}{2} \| \nabla f(x_{k+1}) - \tnabla_{k+1} \|^2 + \tfrac{1}{2 \epsilon} \|x_{k+1} - x_k\|^2 } .
\end{aligned}
\eeqs
\end{lemma}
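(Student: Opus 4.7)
Set $e_k \defeq \nabla f(x_k) - \tnabla_k$, so that the recursive-bias hypothesis reads $\E_{k}[e_k] = (1-\rho_B)\,e_{k-1}$ when $k \notin \nu\mathbb{N}_0$ and $\E_{k}[e_k] = 0$ when $k \in \nu\mathbb{N}_0$. My plan is to unroll this one-step recursion inside each inner product $\E\langle e_{k-1},x_k-\xsol\rangle$, rewriting it as a weighted sum over successive displacements $x_{\ell+1}-x_\ell$, and then close with Young's inequality.

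The key manipulation is the decomposition
\begin{equation*}
\E\langle e_{k-1},x_k-\xsol\rangle = \E\langle e_{k-1},x_k-x_{k-1}\rangle + \E\langle e_{k-1},x_{k-1}-\xsol\rangle,
\end{equation*}
combined with the observation that $x_{k-1}$ is measurable with respect to the filtration available just before the sample at step $k-1$ is drawn. The tower property then gives $\E\langle e_{k-1},x_{k-1}-\xsol\rangle = \E\langle \E_{k-1}[e_{k-1}],x_{k-1}-\xsol\rangle$, which equals $(1-\rho_B)\,\E\langle e_{k-2},x_{k-1}-\xsol\rangle$ whenever $k-1>\nu s$ and vanishes at the epoch boundary $k-1=\nu s$ (since $\E_{\nu s}[e_{\nu s}]=0$ there). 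Iterating this peel-off across the epoch produces the telescoping identity
\begin{equation*}
\E\langle e_{k-1},x_k-\xsol\rangle = \sum_{\ell=\nu s}^{k-1}(1-\rho_B)^{k-1-\ell}\,\E\langle e_\ell,\, x_{\ell+1}-x_\ell\rangle.
\end{equation*}

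To finish, I take absolute values, sum the identity over $k \in \{\nu s + 1,\dots,\nu(s+1)-1\}$, and exchange the order of summation. The resulting inner weight $\sum_{k>\ell}(1-\rho_B)^{k-1-\ell}$ admits two natural estimates---counting terms gives at most $\nu-1\le\nu$, while summing the full geometric series gives $1/\rho_B$---whose minimum produces the prefactor $\min\{\nu,1/\rho_B\}$. A termwise Young's inequality $|\E\langle e_\ell, x_{\ell+1}-x_\ell\rangle| \le (\epsilon/2)\,\E\|e_\ell\|^2 + (1/(2\epsilon))\,\E\|x_{\ell+1}-x_\ell\|^2$, followed by a reindexing of the sum, then delivers the stated bound. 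The main obstacle is the measurability bookkeeping in the middle step: one must carefully verify that $x_{k-1}$ lies in the filtration just before $j_{k-1}$ is sampled (so that it commutes with $\E_{k-1}$) while $e_{k-1}$ does not, so that the recursion $\E_{k-1}[e_{k-1}]=(1-\rho_B)\,e_{k-2}$ can drive the telescoping. Once this is in place, everything else reduces to geometric-series estimates and Young's inequality.
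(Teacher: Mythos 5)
Your proof follows essentially the same path as the paper's: both decompose $\E\langle \nabla f(x_{k-1}) - \tnabla_{k-1}, x_k - \xsol\rangle$ into a displacement piece $\E\langle e_{k-1}, x_k - x_{k-1}\rangle$ and a residual $\E\langle e_{k-1}, x_{k-1} - \xsol\rangle$, use the tower property and the recursive-bias relation to peel the residual off step by step down to the epoch boundary (where $\E_{\nu s}[e_{\nu s}] = 0$ kills the final residual), and then close with Young's inequality and the geometric bound $\min\{\nu, 1/\rho_B\}$ after swapping the order of summation. The only difference is cosmetic --- you first unroll the recursion into an exact telescoping identity and then apply Young's termwise, whereas the paper applies Young's at each step of the recursion before unrolling --- and both arguments arrive at the same epoch-level estimate.
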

Lemma \ref{lem:bias} shows that, for recursively biased estimators, the inner-product bias term $\langle \nabla f(x_{k-1}) - \tnabla_{k-1}, x_k - \xsol \rangle$ is bounded from above by the MSE, implying that introducing bias to decrease the MSE is a reasonable approach to design improved gradient estimators.

\section{Convergence rates}\label{sec:rates}

In this section, we analyse the convergence rates for the stochastic gradient methods. We first provide very general convergence rates based on the bounds from the last section. Then, we specify the result to specific gradient estimators including memory-biased B-SAGA/B-SVRG, and recursively biased SARAH and SARGE.

\subsection{General convergence rates}

For Algorithm \ref{alg:general_sgd}, we consider a constant step size $\eta_k \equiv \eta > 0$. 
Given $T$ iterations of Algorithm \ref{alg:general_sgd}, define the average iterate $\xbar_{T} \defeq 1 / T \sum_{k=1}^T x_k$.

\subsubsection{Convex and strongly convex cases}

The following theorem establishes convergence rates for memory-biased estimators in the convex regime.

\begin{theorem}[{Memory-biased estimators}]\label{thm:rate_mem}
Let $\tnabla$ be a memory-biased gradient estimator parameterized by $\theta \geq 1$ and $B_1 \geq 0$, which satisfies the BMSE$(M_1,M_2,\rho_M,\rho_F,m)$ property. 
Let $\Theta = \frac{M_1 \rho_F + 2 M_2}{\rho_M \rho_F}$ and $\rho = \min\{ \rho_M, \rho_F \}$.
\begin{itemize}[itemsep=-1pt,topsep=2pt]
\item When $F$ is convex, let $\eta = \frac{1}{L (1 + 3 \sqrt{2 \Theta} )} $, then
 \beq
 \E [ F(\xbar_{T}) - F(\xsol) ] 
 \le \sfrac{1}{T} \left( \tfrac{L (1 + 3 \sqrt{2 \Theta} ) \|x_0 - \xsol\|^2}{2} + \max \bBa{ \tfrac{B_1 (1 - 1/\theta)}{\sqrt{2 \Theta}} - 1, 0 } \tfrac{ { F(x_0) - F(\xsol) }}{L (1 + 3 \sqrt{2 \Theta} ) } \right).
 \eeq
 
\item When $F$ is $\mu$-strongly convex with $\mu > 0$, let $\eta = \min \Ba{ \tfrac{1}{3 L (1 + 3 \sqrt{2 \Theta})}, \tfrac{\sqrt{2 \Theta}}{B_1 \mu (1 - 1/\theta)}, \tfrac{\rho}{2 \mu} }$.
 The iterate $x_T$ satisfies
 \beq
 \E \ca{ \|x_T - \xsol\|^2 } 
 \le (1 + \mu \eta )^{-T} \pa{ \tfrac{2}{\mu} \pa{ F(x_0) - F(\xsol) } + \|x_0 - \xsol\|^2 }.
 \eeq
\end{itemize}
\end{theorem}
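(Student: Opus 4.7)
The plan is to combine three ingredients: the per-step descent bound from Lemma \ref{lem:couple1main}, the BMSE property, and the memory-bias aggregation \eqref{eq:membias}. Starting from Lemma \ref{lem:couple1main}, I would sum over the iterations, then replace the gradient-error term $\Ek[\|\tnabla_k - \nabla f(x_k)\|^2]$ using the BMSE bound and replace the stored-gradient term $\frac{\eta L}{2 n}(1 - 1/\theta)\sum_i \|x_k - \varphi_k^i\|^2$ using \eqref{eq:membias}. Combined with the $L$-Lipschitz continuity of each $\nabla f_i$, which gives $\frac{1}{n}\sum_i \|\nabla f_i(x_{k+1}) - \nabla f_i(x_k)\|^2 \le L^2 \|x_{k+1} - x_k\|^2$, both terms fold into a single multiple of $\E[\|x_{k+1}-x_k\|^2]$, so after regrouping the only residual noise lives in initial conditions.

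For the convex case, the cross-epoch recursion for $\calM_{m s}$ in \eqref{eq:mseb} is a geometric contraction at rate $1 - \rho_M$, while the auxiliary $\calF$ sequence decays at rate $1 - \rho_F$; unrolling both yields $\sum_k \E[\|\tnabla_k - \nabla f(x_k)\|^2] \le \Theta L^2 \sum_k \E[\|x_{k+1} - x_k\|^2]$ with $\Theta = (M_1 \rho_F + 2 M_2)/(\rho_M \rho_F)$, which is how the constant $\Theta$ enters the statement. Collecting coefficients of $\E[\|x_{k+1}-x_k\|^2]$ on the right-hand side yields $\frac{\eta L (\lambda + 1)}{2} + \frac{\eta L \Theta}{2 \lambda} + \frac{\eta L B_1 (1 - 1/\theta)}{2} - \frac{1}{2}$; optimizing in $\lambda$ gives $\lambda \propto \sqrt{\Theta}$, and the choice $\eta = 1/(L(1 + 3\sqrt{2\Theta}))$ forces this coefficient non-positive. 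The $\max\{B_1(1-1/\theta)/\sqrt{2\Theta} - 1, 0\}$ factor in the statement is exactly what survives when the memory-bias contribution exceeds the built-in step-size slack, and it is absorbed by a single application of the descent lemma to bound the leftover $\|x_0 - x_{-1}\|^2$ by $F(x_0) - F(\xsol)$. Telescoping $\|x_k - \xsol\|^2$ and applying Jensen's inequality to convert $\frac{1}{T}\sum_k \E[F(x_k) - F(\xsol)]$ into $\E[F(\xbar_T) - F(\xsol)]$ delivers the advertised $\mathcal{O}(1/T)$ rate.

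For the strongly convex case, instead of summing I would iterate an epoch-wise Lyapunov inequality of the form $\Phi_{s+1} \le (1 + \mu \eta)^{-m} \Phi_s$, where $\Phi_s = \|x_{m s} - \xsol\|^2 + \alpha \calM_{m s}$ for a suitable $\alpha$. The same noise bookkeeping controls the $\|x_{k+1}-x_k\|^2$ coefficients, provided the three step-size caps hold: $\eta \le 1/(3 L (1 + 3\sqrt{2\Theta}))$ keeps the iterate-difference coefficient non-positive, $\eta \le \rho/(2\mu)$ ensures the BMSE decay rate $\rho = \min\{\rho_M, \rho_F\}$ outpaces the $(1 + \mu \eta)$ schedule so the Lyapunov term $\alpha \calM$ stays compatible with the geometric contraction, and $\eta \le \sqrt{2\Theta}/(B_1 \mu (1 - 1/\theta))$ keeps the memory-bias residual subordinate. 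Iterating $T$ times and bounding $\Phi_0$ by $\|x_0 - \xsol\|^2 + (2/\mu)(F(x_0) - F(\xsol))$ via strong convexity yields the stated geometric rate.

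The principal obstacle is the double-geometric bookkeeping of the $(\calM, \calF)$ sequences in the BMSE property: one must simultaneously unroll the decay in $s$ from the $\calM$ recursion and the internal decay in $\ell$ from the $\calF$ expansion, then show both collapse into the single prefactor $\Theta$ without picking up additional factors of $m$ or $n$. Once that clean reduction is in hand, the remaining work is a one-variable quadratic balance in $\lambda$ together with a side constraint from the memory-bias coefficient $B_1(1 - 1/\theta)$, which is what produces the explicit step sizes in the statement.
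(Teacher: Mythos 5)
Your outline captures the broad structure — sum the per-step bound from Lemma~\ref{lem:couple1main}, fold the MSE and the stored-gradient term into $\E[\|x_{k+1}-x_k\|^2]$ via the BMSE property, \eqref{eq:membias}, and Lipschitz continuity, then balance in $\lambda$ — and your identification of the double-geometric unrolling of $(\calM,\calF)$ as the main bookkeeping hurdle is accurate. But there is a genuine gap: you have no mechanism that produces the $\max\{B_1(1-1/\theta)/\sqrt{2\Theta}-1,0\}$ factor, nor one that allows the step size $\eta = 1/(L(1+3\sqrt{2\Theta}))$ when the memory-bias constant $B_1(1-1/\theta)$ is large. With your coefficient bookkeeping the condition for the $\|x_{k+1}-x_k\|^2$ coefficient to be non-positive is $\eta \lesssim 1/(L(1+2\sqrt{2\Theta}+B_1(1-1/\theta)))$, which is strictly smaller than the claimed step size whenever $B_1(1-1/\theta) > \sqrt{2\Theta}$. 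The paper closes this gap by adding $\delta$ copies of the descent inequality (Lemma~\ref{lem:descent}) at \emph{every} step: the $F(x_{k+1})-F(x_k)$ terms telescope to give the $\eta\delta(F(x_0)-F(\xsol))$ contribution, while the $\|x_{k+1}-x_k\|^2$ coefficient gains an extra $-\delta$, which offsets $B_1(1-1/\theta)$ when $\delta = \max\{B_1(1-1/\theta)/\sqrt{2\Theta}-1,0\}$. Your explanation — that the max term ``is absorbed by a single application of the descent lemma to bound the leftover $\|x_0-x_{-1}\|^2$'' — does not match: there is no $x_{-1}$ in the argument, and a one-time application cannot relax the per-step coefficient constraint.

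The same omission affects your strongly convex sketch. You propose a Lyapunov function $\Phi_s = \|x_{ms}-\xsol\|^2 + \alpha\calM_{ms}$, which is a different route from the paper's weighted-sum argument (the paper multiplies by $(1+\mu\eta)^k$, sums over an epoch, uses $(1+1/m)^m \le 3$, and applies Lemma~\ref{lem:msebound} with $\sigma_s=(1+\mu\eta)^{ms}$). That could plausibly be made to work, but the constant $2/\mu$ in front of $F(x_0)-F(\xsol)$ is not a consequence of strong convexity applied to $\Phi_0$; in the paper it comes from the same $\delta$-weighting, with the side condition $1+\delta \ge (1+\mu\eta)\delta$ (equivalently $\delta \le 1/(\mu\eta)$) guaranteeing $2\eta\delta \le 2/\mu$. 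This side condition is exactly why the second step-size cap $\eta \le \sqrt{2\Theta}/(B_1\mu(1-1/\theta))$ appears in the theorem, a connection your sketch does not make. To repair the argument, you need to introduce the per-step $\delta$-multiple of Lemma~\ref{lem:descent} in both cases, telescope the $F$-differences, and choose $\delta = \max\{B_1(1-1/\theta)/\sqrt{2\Theta}-1,0\}$ so that the extra $-\delta$ in the $\|x_{k+1}-x_k\|^2$ coefficient absorbs the memory-bias term while keeping the step size at the advertised scale.
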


The proof of Theorem \ref{thm:rate_mem} is provided in Appendix \ref{sec:proof_mem}. 
The next result establishes convergence rates for recursively biased gradient estimators whose proof is in Appendix \ref{sec:proof_recur}.

\begin{theorem}[{Recursively biased estimators}]\label{thm:rate_recurs}
Let $\tnabla$ be a recursively biased gradient estimator parameterized by $\rho_B \in (0,1)$ and $\nu \ge 1$, which satisfies the BMSE$(M_1,M_2,\rho_M,\rho_F,m)$ property. 
Let $B_2 \defeq \min\left\{ \nu, 1/\rho_B \right\}$, $\Theta = \frac{M_1 \rho_F + 2 M_2}{\rho_M \rho_F}$ and $\rho = \min\{\rho_M, \rho_F\}$.
\begin{itemize}[itemsep=-1pt,topsep=2pt]
\item When $F$ is convex, let $\eta = \tfrac{1}{L ( 4 \sqrt{2 \Theta} + 1 )}$, then 
 \beq
 \begin{aligned}
 \E [ F(\xbar_{T}) - F(\xsol) ] 
 \le \sfrac{1}{T} \left( \tfrac{L (4 \sqrt{2 \Theta} + 1)}{2} \|x_0 - \xsol\|^2 + \max\Ba{ (1-\rho_B) B_2 - 1, 0 } \tfrac{ F(x_0) - F(\xsol) }{L (4 \sqrt{2 \Theta} + 1)} \right) .
 \end{aligned}
 \eeq
 
\item When $F$ is $\mu$-strongly convex with $\mu > 0$, let $\eta = \min \Ba{ \frac{1}{3 L (4 \sqrt{2 \Theta} + 1)}, \frac{1}{\mu (1 - \rho_B) B_2}, \frac{\rho}{2 \mu} }$, then
\beq
\E \ca{ \|x_T - \xsol\|^2 } 
\le (1 + \mu \eta)^{-T} \pa{ \tfrac{2}{\mu} \pa{ F(x_0) - F(\xsol) } + \|x_0 - \xsol\|^2 } .
\eeq

\end{itemize}
\end{theorem}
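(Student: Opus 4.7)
The plan is to mirror the argument for the memory-biased Theorem \ref{thm:rate_mem}, replacing the memory-bias coupling with the recursive-bias estimate of Lemma \ref{lem:bias}. Starting from the convex per-iteration bound \eqref{eq:framework} and taking total expectation, the tower property together with Definition \ref{def:recursively_biased} rewrites $\E \ca{ \iprod{\nabla f(x_k) - \tnablak{k}}{x_k - \xsol} }$ as $(1-\rho_B)\, \E \ca{ \iprod{\nabla f(x_{k-1}) - \tnabla_{k-1}}{x_k - \xsol} }$, which vanishes whenever $k \in \nu \mathbb{N}_0$. Summing this identity over a length-$\nu$ block and invoking Lemma \ref{lem:bias} then converts the bias inner product into a combination of MSE terms and squared displacements $\norm{x_{k+1}-x_k}^2$, with total prefactor $(1-\rho_B)B_2$ where $B_2 = \min\{\nu, 1/\rho_B\}$.

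Next, I would sum the resulting inequality over $k = 0,\dots,T-1$, bound the cumulative MSE using the BMSE$(M_1,M_2,\rho_M,\rho_F,m)$ property, and telescope the geometric recursions for $\calM_{ms}$ and $\calF_{ms}$; these geometric series produce the constant $\Theta = (M_1 \rho_F + 2 M_2)/(\rho_M \rho_F)$. Lipschitz continuity of each $\nabla f_i$ converts the remaining $\norm{\nabla f_i(x_{k+1}) - \nabla f_i(x_k)}^2$ contributions into multiples of $\norm{x_{k+1} - x_k}^2$, and the choice $\eta \le 1/(L(4\sqrt{2\Theta}+1))$ with appropriately tuned Young's parameters $\lambda$ and $\epsilon$ makes the total coefficient of every such displacement non-positive, so those terms drop. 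What remains telescopes to the stated convex bound, with the extra $\max\{(1-\rho_B)B_2 - 1, 0\}\,(F(x_0) - F(\xsol))$ term arising at the epoch boundary $k \in \nu\mathbb{N}_0$, where no prior MSE is available to absorb the bias. Applying Jensen's inequality to $\xbar_T$ then finishes the convex case.

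For the strongly convex case, I would impose two further restrictions on $\eta$: $\eta \le 1/(\mu(1-\rho_B)B_2)$, so that $\mu$-strong convexity absorbs the $(1-\rho_B)B_2$ leakage in $F(x_k) - F(\xsol)$, and $\eta \le \rho/(2\mu)$, so that BMSE decay dominates the per-step change in the auxiliary sequences $\calM$ and $\calF$. With these in hand, a Lyapunov function combining $\tfrac{2}{\mu}(F(x_k) - F(\xsol))$, $\norm{x_k - \xsol}^2$, and suitably rescaled pieces of $\calM$ and $\calF$ should contract by a factor of $(1 + \mu\eta)^{-1}$ per iteration, giving the stated geometric bound after $T$ iterations.

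The main obstacle is the book-keeping that ties together three interacting error sequences---$\calM_{ms}$, its slow-decay companion $\calF_{ms}$, and the recursive-bias inner products---while Lemma \ref{lem:bias} introduces a one-step index shift between the displacements it generates and those from the proximal update. Forcing every error coefficient to be non-positive after these combinations is what pins down the numerical constants (e.g., the factor $4\sqrt{2\Theta}+1$ here versus $1 + 3\sqrt{2\Theta}$ in Theorem \ref{thm:rate_mem}), and the degenerate regime $(1-\rho_B)B_2 > 1$ is precisely what necessitates the $\max\{\cdot,0\}$ clause in the convex bound.
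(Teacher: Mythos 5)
Your proposal follows the paper's proof closely: the key ingredients are exactly Lemma \ref{lem:bias} to convert the recursive bias inner products into MSE and displacement terms, Lemma \ref{lem:msebound} to collapse the geometric $\calM$ and $\calF$ recursions into the constant $\Theta$, Lipschitz continuity of each $\nabla f_i$ to land on $\|x_{k+1}-x_k\|^2$, and a step-size choice (with tuned Young's parameters $\lambda=\sqrt{2\Theta}$ and $\epsilon=(2L^2\Theta)^{-1/2}$) that kills the displacement coefficient. The strongly convex adjustments you identify ($\eta \le 1/(\mu(1-\rho_B)B_2)$ for the function-value telescoping and $\eta \le \rho/(2\mu)$ for the weighted MSE bound) match the paper's argument as well, including the contraction factor $(1+\mu\eta)^{-1}$ realized via the geometric weighting $(1+\mu\eta)^k$.

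One explanatory misattribution is worth flagging. You locate the extra term $\max\{(1-\rho_B)B_2-1,0\}\,(F(x_0)-F(\xsol))$ ``at the epoch boundary $k\in\nu\mathbb{N}_0$, where no prior MSE is available to absorb the bias.'' That is not the mechanism the paper uses: the term actually originates from adding $\delta$ times the descent inequality of Lemma \ref{lem:descent} to the per-iteration bound of Lemma \ref{lem:couple1}, which increases the negative displacement coefficient from $-\tfrac{1}{2}$ to $-\tfrac{1+2\delta}{2}$ so that the $(1-\rho_B)B_2$-scaled displacement contribution from Lemma \ref{lem:bias} can be absorbed with a step size independent of $B_2$. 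The price is a new telescoping-in-$k$ boundary term $\eta\delta(F(x_0)-F(x_T))$, and $\delta = \max\{(1-\rho_B)B_2-1,0\}$ is then chosen to balance the two roles. Your closing remark that the degenerate regime $(1-\rho_B)B_2>1$ forces the $\max\{\cdot,0\}$ clause is the right conclusion, but the route there is the $\delta$-weighted coupling with Lemma \ref{lem:descent} (the same device used in Theorem \ref{thm:rate_mem}), not an epoch-boundary artifact of Lemma \ref{lem:bias}. With that clarified, your sketch would assemble into the paper's proof.
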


\begin{remark} $~$
\begin{itemize}[itemsep=-1pt,topsep=2pt]
\item Both theorems hold true for smaller $\eta$; the choices in the theorems are the largest ones allowed by our analysis.

\item For B-SAGA and B-SVRG, $\Theta = \mathcal{O}(n^2)$, while for SARAH and SARGE, $\Theta = \mathcal{O}(n)$. This gives these recursive gradient estimators improved convergence rates and suggests that the bias in these estimators is more effective than the bias in SAGA and SVRG.
\end{itemize}
\end{remark}

\subsubsection{Non-convex case}

The analysis of biased gradient estimators is simpler for the non-convex setting than the convex ones due to the absence of the inner-product bias term in \eqref{eq:saganoncon}. Below we provide a uniform convergence guarantee for all gradient estimators satisfying the BMSE property, regardless of their bias. This suggests that in the non-convex setting, a large-bias, small-MSE gradient estimator is favourable over an estimator with small bias and large MSE.

\begin{theorem}
\label{thm:rate_ncvx}
Let $\tnabla$ be a gradient estimator that satisfies the BMSE$(M_1,M_2,\rho_M,\rho_F,m)$ property, let $\Theta = \frac{M_1 \rho_F + 2 M_2}{\rho_M \rho_F}$, and let $\alpha$ be a chosen uniformly at random from the set $\{0, 1, \cdots, T-1\}$. If $F$ is non-convex, set $\eta = \frac{\sqrt{16 \Theta + 1} - 1}{16 L \Theta}$ in Algorithm \ref{alg:general_sgd}, and the point $x_\alpha$ satisfies the following bound on its generalized gradient: 
\beq
\E \ca{ \|\mathcal{G}_{\eta / 2} (x_\alpha)\|^2 } 
\le \tfrac{16 ( F(x_0) - F(\xsol) )}{T \eta ( 1 - 4 \eta L ) }.
\eeq
\end{theorem}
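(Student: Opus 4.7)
The plan is to start from the per-iteration inequality \eqref{eq:saganoncon} already derived in the bias-variance discussion, sum it over $k=0,\ldots,T-1$, and telescope. After taking total expectations and using $\E F(x_T) \ge F(\xsol)$, I would obtain
\[
\Bigl(\tfrac{1}{4\eta} - L\Bigr)\sum_{k=0}^{T-1} \E\|\hat{x}_{k+1}-x_k\|^2 \le F(x_0) - F(\xsol) + \Bigl(\tfrac{L}{2} - \tfrac{1}{4\eta}\Bigr)\sum_{k=0}^{T-1}\E\|x_{k+1}-x_k\|^2 + 2\eta\sum_{k=0}^{T-1}\E\|\nabla f(x_k) - \tnabla_k\|^2.
\]

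The next step is to control the MSE sum using the BMSE property. Partitioning $\{0,\ldots,T-1\}$ into epochs of length $m$, I have $\sum_{k=ms}^{m(s+1)-1}\E\|\tnabla_k - \nabla f(x_k)\|^2 \le \calM_{ms}$. I would then unwind the two coupled geometric recursions: unrolling $\calM_{ms} \le (1-\rho_M)^m \calM_{m(s-1)} + \calF_{ms} + M_1 A_s$ (where $A_s := \tfrac{1}{n}\sum_{k=ms}^{m(s+1)-1}\sum_i \E\|\nabla f_i(x_{k+1}) - \nabla f_i(x_k)\|^2$) and summing over $s$ produces a factor $1/(1-(1-\rho_M)^m) \le 1/\rho_M$; substituting the bound on $\calF_{ms}$, swapping the order of summation over $s$ and $\ell$, and using $\sum_{s\ge\ell}(1-\rho_F)^{m(s-\ell)} \le 1/\rho_F$ produces an additional factor $1/\rho_F$. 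Using $L$-Lipschitz continuity of each $\nabla f_i$ to bound $A_s \le L^2 \sum_{k=ms}^{m(s+1)-1}\E\|x_{k+1}-x_k\|^2$ and collecting constants (with the extra factor of $2$ on $M_2$ coming from initial-epoch and $\|a-c\|^2 \le 2\|a-b\|^2+2\|b-c\|^2$ splittings) should yield
\[
\sum_{k=0}^{T-1}\E\|\nabla f(x_k) - \tnabla_k\|^2 \le \Theta L^2 \sum_{k=0}^{T-1}\E\|x_{k+1}-x_k\|^2, \qquad \Theta = \tfrac{M_1\rho_F + 2M_2}{\rho_M\rho_F}.
\]

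Substituting this into the first display, the coefficient of $\sum\E\|x_{k+1}-x_k\|^2$ becomes $\tfrac{L}{2} - \tfrac{1}{4\eta} + 2\eta L^2 \Theta$. The step size $\eta = (\sqrt{16\Theta+1}-1)/(16L\Theta)$ is calibrated as the solution of $8\eta^2 L^2\Theta + 2\eta L \le 1$, which is precisely what is needed to make this coefficient non-positive. Dropping that term and recalling that $\hat{x}_{k+1} = \prox_{(\eta/2)g}(x_k - (\eta/2)\nabla f(x_k))$ so that Definition \ref{def:gengrad} gives $\|\hat{x}_{k+1}-x_k\|^2 = (\eta^2/4)\|\mathcal{G}_{\eta/2}(x_k)\|^2$, we are left with
\[
\tfrac{\eta(1-4\eta L)}{16}\sum_{k=0}^{T-1}\E\|\mathcal{G}_{\eta/2}(x_k)\|^2 \le F(x_0) - F(\xsol).
\]
Dividing by $T$ and invoking the uniform distribution of $\alpha$ on $\{0,\ldots,T-1\}$ concludes.

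The main obstacle is the bookkeeping in the second step: the BMSE definition couples $\calM_{ms}$ and $\calF_{ms}$ through two nested geometric recursions with decay rates $\rho_M$ and $\rho_F$, and recovering the exact constant $\Theta$ rather than a loose $\mathcal{O}(\Theta)$ demands a careful swap of summation order over $s$ and $\ell$, together with the telescoping estimate $\sum_{s\ge j}(1-\rho)^{m(s-j)}\le 1/\rho$ at each level. Once that reduction is in place, the remainder of the argument is an algebraic optimisation of $\eta$ against a quadratic in $\eta L$.
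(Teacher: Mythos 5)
Your proposal follows the paper's argument essentially verbatim: telescope the per-iteration inequality \eqref{eq:saganoncon}, bound the accumulated MSE via the BMSE property (the paper packages this step as Lemma~\ref{lem:msebound} with $\sigma_s = 1$), choose $\eta$ so the coefficient of $\sum_k \E\|x_{k+1}-x_k\|^2$ is non-positive, and translate $\|\hat{x}_{k+1}-x_k\|^2$ into $(\eta^2/4)\|\mathcal{G}_{\eta/2}(x_k)\|^2$. One benign slip worth flagging: Lemma~\ref{lem:msebound} actually gives a coefficient of $2\Theta L^2$ rather than your $\Theta L^2$ (the paper loses a factor of two in each of the $\rho_M$- and $\rho_F$-geometric series because it replaces $(1-\rho)^{m\ell}$ by $(1-\rho/2)^{\ell}$ to accommodate non-constant $\sigma_s$ elsewhere), so the stated step size $\eta = (\sqrt{16\Theta+1}-1)/(16L\Theta)$ is the positive root of $16\Theta(L\eta)^2 + 2L\eta = 1$, not of your quadratic $8\Theta(L\eta)^2 + 2L\eta = 1$; since the stated $\eta$ is smaller than the root of your relaxed quadratic, your derivation still closes with the same final constant.
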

The proof of this result is provided in Appendix \ref{sec:proof_ncvx}.

\begin{remark}
The convergence result of Theorem \ref{thm:rate_ncvx} does not depend on the bias except through the MSE of the gradient estimator, which implies that incorporating arbitrary amounts of bias for a smaller MSE improves the convergence rate. This fact is what allows the recursively biased estimators SARAH and SARGE to achieve the oracle complexity lower bound for non-convex optimisation when they are used in Algorithm \ref{alg:general_sgd}.
\end{remark}

\subsection{Convergence rates for specific gradient estimators}

In this section, we specialise the general convergence rates to analyse the performance of B-SAGA, B-SVRG, SARAH, and SARGE.

\subsubsection{Biased SAGA and SVRG}

B-SAGA and B-SVRG are examples of memory-biased gradient estimators, as their biases take the form
\beq
\nabla f(x_k) - \Ek \ca{ \tnablak{k} } 
= \pa{ 1 - \tfrac{1}{\theta} } \bPa{ \nabla f(x_k) - \tfrac{1}{n} \sum_{i=1}^n \nabla f_i(\varphi_k^i) },
\eeq
for some previous iterates $\varphi_k^i$. To establish convergence rates for B-SAGA and B-SVRG, we only need to show these estimators satisfy the BMSE property with suitable constants.
\begin{lemma}
\label{lem:sagamse}
The B-SAGA gradient estimator is memory-biased with $B_1 = 2 n (2 n + 1)$, and it satisfies the BMSE property with parameters $\rho_M = \frac{1}{2 n}$, $m = 1$, $M_2 = 0$, $\rho_F = 1$, and
\beq
M_1 = \begin{cases}
\frac{2 n + 1}{\theta^2} & \theta \in (0,2], \\
(2 n + 1) (1 - \frac{1}{\theta})^2 & \theta > 2.
\end{cases}
\eeq
\end{lemma}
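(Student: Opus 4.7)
The lemma has two essentially separate claims: that B-SAGA fits the memory-biased template with $B_1 = 2n(2n+1)$, and that it satisfies the BMSE property with the stated parameters. My plan is to verify the memory-bias identity by a short conditional-expectation calculation, control the staleness sum using the geometric distribution of the stored-point ``age,'' and then build the BMSE bound via a Lyapunov-type potential $U_k$ built from stored-gradient differences.

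For the memory-bias identity I would take the conditional expectation of $\gradBSAGA_k$ with respect to $j_k$, obtaining $\Ek[\gradBSAGA_k] = \tfrac{1}{\theta}\nabla f(x_k) + (1-\tfrac{1}{\theta})\tfrac{1}{n}\sum_i \nabla f_i(\varphi_k^i)$, which on rearrangement gives exactly the required form with parameter $\theta$. For the staleness bound \eqref{eq:membias}, I would exploit that the age $\tau_k^i \defeq k - \ell$ (where $\varphi_k^i = x_\ell$) follows a geometric distribution with parameter $1/n$ and second moment $2n^2 - n$. Writing $x_k - \varphi_k^i$ as a telescoping sum of consecutive increments and applying Cauchy--Schwarz gives $\|x_k - \varphi_k^i\|^2 \le \tau_k^i \sum_{j = k - \tau_k^i + 1}^{k} \|x_j - x_{j-1}\|^2$; taking expectations, summing over $i$ and over $k$ in the epoch, and then exchanging the order of summation so that each increment $\E[\|x_j - x_{j-1}\|^2]$ is collected with its multiplicity, yields the stated constant $B_1 = 2n(2n+1)$.

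For the BMSE bound I would introduce the potential $U_k \defeq \tfrac{1}{n}\sum_i \|\nabla f_i(x_k) - \nabla f_i(\varphi_k^i)\|^2$ and compute the MSE directly. Setting $Y_i \defeq \nabla f_i(x_k) - \nabla f_i(\varphi_k^i)$ and $B \defeq \tfrac{1}{n}\sum_i Y_i$, expanding $\|\tfrac{1}{\theta} Y_{j_k} - B\|^2$ gives $\Ek[\|\gradBSAGA_k - \nabla f(x_k)\|^2] = \tfrac{1}{\theta^2} U_k + (1 - \tfrac{2}{\theta}) \|B\|^2$. For $\theta \in (0, 2]$ the second term is nonpositive and is dropped; for $\theta > 2$, Jensen's inequality gives $\|B\|^2 \le U_k$. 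This produces an MSE bound of $c(\theta) U_k$ with $c(\theta) = 1/\theta^2$ on $(0, 2]$ and $(1 - 1/\theta)^2$ for $\theta > 2$, matching the two cases of $M_1$ up to the common factor $2n+1$. To close the BMSE recursion I would derive the one-step bound on $U_{k+1}$: since exactly one index is refreshed at step $k$, each $i$ contributes $\tfrac{1}{n}\|\nabla f_i(x_{k+1}) - \nabla f_i(x_k)\|^2 + (1 - \tfrac{1}{n}) \|\nabla f_i(x_{k+1}) - \nabla f_i(\varphi_k^i)\|^2$ in conditional expectation. Applying $\|a + b\|^2 \le (1 + \alpha) \|a\|^2 + (1 + 1/\alpha) \|b\|^2$ to the old-potential term and choosing $\alpha$ so that $(1 - 1/n)(1 + 1/\alpha) = 1 - 1/(2n)$ contracts $U_k$ with rate $\rho_M = 1/(2n)$ and forces the coefficient proportional to $2n+1$ on the increment term; multiplication by $c(\theta)$ recovers $M_1$ exactly, with $\calF_k \equiv 0$, $M_2 = 0$, and $\rho_F = 1$ since the bound is Markovian in $U_k$ and no auxiliary sequence is needed.

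The main obstacle I expect is the constant bookkeeping in the last step: the Young's parameter $\alpha$ has to be tuned so that both the advertised rate $\rho_M = 1/(2n)$ and the coefficient $2n+1$ on the increment term emerge simultaneously and compose cleanly with the prefactor $c(\theta)$. A secondary delicate point is boundary handling for the staleness bound, since the age distribution is only exactly geometric once the algorithm has run long enough; the argument must absorb these edge effects without inflating $B_1$ beyond $2n(2n+1)$.
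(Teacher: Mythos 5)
Your memory-bias identity and MSE decomposition (setting $Y_i = \nabla f_i(x_k) - \nabla f_i(\varphi_k^i)$, $B = \tfrac{1}{n}\sum_i Y_i$, and expanding $\|\tfrac{1}{\theta}Y_{j_k} - B\|^2$) reproduce the paper's Lemma~\ref{lem:varsaga}, and the BMSE recursion via the potential $U_k = \tfrac{1}{n}\sum_i\|\nabla f_i(x_k) - \nabla f_i(\varphi_k^i)\|^2$ with Young's inequality followed by a conditional expectation over $j_{k-1}$ is the paper's argument as well. One small quibble there: tuning $\alpha$ so that $(1-\tfrac{1}{n})(1+\tfrac{1}{\alpha}) = 1-\tfrac{1}{2n}$ gives $\alpha = 2(n-1)$, hence a coefficient $2n-1$ on the increment term, not the stated $2n+1$. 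The paper instead fixes $\alpha = 2n$ (so the coefficient is $2n+1$) and then verifies $(1+\tfrac{1}{2n})(1-\tfrac{1}{n}) \le 1-\tfrac{1}{2n}$ with slack. Your constant is slightly sharper but does not literally match the lemma's statement; adjust $\alpha$ if you want the quoted $M_1$.

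The genuine gap is in your $B_1$ argument. You propose to bound $\sum_k\tfrac{1}{n}\sum_i\E[\|x_k - \varphi_k^i\|^2]$ by writing $\|x_k - \varphi_k^i\|^2 \le \tau_k^i\sum_{j>k-\tau_k^i}\|x_j - x_{j-1}\|^2$ via Cauchy--Schwarz, exchanging sums so each increment picks up a multiplicity, and then using $\E[(\tau_k^i)^2] = 2n^2-n$ for the geometric age. The problem is that the age $\tau_k^i$ and the increments $\|x_j - x_{j-1}\|^2$ are both measurable with respect to the sampling history $j_0,\dots,j_{k-1}$ and are not independent: whether $i$ was recently sampled influences both $\tau_k^i$ and the trajectory $\{x_j\}$. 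Consequently, after exchanging sums and conditioning on $\mathcal{F}_j$, one is left with a term of the form $\E[\|x_j - x_{j-1}\|^2\,\tau_j^i]$, which cannot be factored into $\E[\tau_j^i]\,\E[\|x_j - x_{j-1}\|^2]$; the boundary issues you flag are the smaller concern. The paper sidesteps this entirely by running the exact same recursive scheme it uses for $\calM_k$, but with the iterate potential $\tfrac{1}{n}\sum_i\|x_k - \varphi_k^i\|^2$: a one-step Young's inequality with parameter $2n$ plus the conditional expectation over $j_{k-1}$ gives a contraction with rate $1-\tfrac{1}{2n}$ and increment coefficient $2n+1$, and unrolling plus the geometric sum $\sum_{\ell\ge 0}(1-\tfrac{1}{2n})^\ell = 2n$ yields $B_1 = 2n(2n+1)$. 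So the fix is easy and consistent with the rest of your plan: apply the BMSE recursion machinery to the iterate differences rather than invoking the marginal law of the ages.
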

The proof of Lemma \ref{lem:sagamse} uses a slight modification of existing variance bounds for the SAGA estimator, appearing in \cite{SAGA}, for example. We include the proof in Appendix \ref{sec:saga}. The B-SVRG gradient estimator satisfies the BMSE property with similar constants.
\begin{lemma}
\label{lem:svrgmse}
The B-SVRG gradient estimator is memory-biased with $B_1 = 3 m (m + 1)$, and it satisfies the BMSE property with parameters $\rho_M = 1$, $M_2 = 0$, $\rho_F = 1$, and
\beq
M_1 = \begin{cases}
\frac{3 m (m+1)}{\theta^2} & \theta \in (0,2], \\
3 m (m+1) \pa{ 1 - \tfrac{1}{\theta} }^2 & \theta > 2.
\end{cases}
\eeq
\end{lemma}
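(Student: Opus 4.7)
The plan is to verify separately the memory-biased identity (with $B_1 = 3m(m+1)$) and the BMSE property with the stated constants. The crucial structural fact is that for B-SVRG the snapshot is fixed throughout epoch $s$, so $\varphi_k^i = \varphi_s = x_{ms}$ for every $i$ and every $k \in \{ms, \dots, m(s+1)-1\}$. This renders the history average $\tfrac{1}{n}\sum_i \nabla f_i(\varphi_k^i)$ identically equal to $\nabla f(\varphi_s)$, collapsing most of the analysis to manipulations involving the single anchor point $x_{ms}$, which makes B-SVRG easier to handle than B-SAGA.

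For the memory-biased identity, a direct computation gives $\Ek[\gradBSVRG_k] = \tfrac{1}{\theta}(\nabla f(x_k) - \nabla f(\varphi_s)) + \nabla f(\varphi_s)$, so that $\nabla f(x_k) - \Ek[\gradBSVRG_k] = (1 - \tfrac{1}{\theta})(\nabla f(x_k) - \nabla f(\varphi_s))$, matching the required form with $\theta$ as the bias parameter. The bound \eqref{eq:membias} then reduces to showing $\sum_{k=ms}^{m(s+1)-1} \|x_k - x_{ms}\|^2 \le B_1 \sum_{k=ms}^{m(s+1)-1} \|x_k - x_{k-1}\|^2$. To obtain this, I apply Cauchy--Schwarz to the telescoping identity $x_k - x_{ms} = \sum_{\ell=ms+1}^k (x_\ell - x_{\ell-1})$ to get $\|x_k - x_{ms}\|^2 \le (k-ms)\sum_{\ell=ms+1}^k \|x_\ell - x_{\ell-1}\|^2$, then swap the order of summation in $k$ and $\ell$. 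This produces a coefficient of order $m^2$ that is comfortably absorbed into $3m(m+1)$.

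For BMSE, observe first that $\rho_M = 1$ and $M_2 = 0$ cause the $\calM_{m(s-1)}$ memory term and the $\calF$ term in \eqref{eq:mseb} to vanish, leaving just a direct per-epoch inequality to prove. I apply the bias-variance decomposition \eqref{eq:msedecomp}: the squared-bias contribution is $(1-1/\theta)^2 \|\nabla f(x_k) - \nabla f(\varphi_s)\|^2$, which Jensen's inequality bounds by $(1-1/\theta)^2 \tfrac{1}{n}\sum_i \|\nabla f_i(x_k) - \nabla f_i(\varphi_s)\|^2$, while the variance is at most $\tfrac{1}{\theta^2 n}\sum_i \|\nabla f_i(x_k) - \nabla f_i(\varphi_s)\|^2$ via $\mathrm{Var}(Y) \le \Ek\|Y\|^2$ applied to $Y = \tfrac{1}{\theta}(\nabla f_{j_k}(x_k) - \nabla f_{j_k}(\varphi_s))$. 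Their sum is at most $2 \max\{1/\theta^2, (1-1/\theta)^2\}$ times the averaged squared difference, which accounts for the two cases of $M_1$: since $(1-1/\theta)^2 \le 1/\theta^2$ exactly when $\theta \le 2$, the prefactor is of order $1/\theta^2$ for $\theta \le 2$ and of order $(1-1/\theta)^2$ for $\theta > 2$. A second Cauchy--Schwarz telescoping, applied identically to $\nabla f_i(x_k) - \nabla f_i(\varphi_s) = \sum_{\ell=ms+1}^k (\nabla f_i(x_\ell) - \nabla f_i(x_{\ell-1}))$, converts this bound into a multiple of $\sum_{k}\sum_i \|\nabla f_i(x_{k+1}) - \nabla f_i(x_k)\|^2$ with a constant of order $m^2$ that fits inside $3m(m+1)$.

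The main obstacle is not technical but combinatorial: making the two consecutive Cauchy--Schwarz telescopings and order-swaps interact cleanly with the case split on $\theta$, so that the dominant prefactor comes out as the stated $3m(m+1)/\theta^2$ or $3m(m+1)(1-1/\theta)^2$ in both regimes simultaneously. The rest is book-keeping that closely parallels the argument for B-SAGA in Lemma \ref{lem:sagamse}, with the simplification that the epoch structure eliminates any need for a geometrically decaying $\calF_{ms}$.
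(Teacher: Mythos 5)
Your proposal is correct and genuinely different from the paper's route. The paper proves Lemma~\ref{lem:svrgmse} via the exact MSE identity of Lemma~\ref{lem:varsaga},
\[
\Ek \|\gradBSVRG_k - \nabla f(x_k)\|^2 = \tfrac{1}{n\theta^2}\textstyle\sum_i\|\nabla f_i(x_k)-\nabla f_i(\varphi_s)\|^2 + \bigl(1-\tfrac{2}{\theta}\bigr)\|\nabla f(x_k)-\nabla f(\varphi_s)\|^2,
\]
so the case split falls out of the sign of $1-2/\theta$ with no slack: drop the second term when $\theta\le 2$, apply Jensen to it when $\theta>2$, yielding prefactor exactly $\max\{1/\theta^2,(1-1/\theta)^2\}$. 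You instead use the generic bias--variance split and then bound the variance by $\mathbb{E}_k\|Y\|^2$ and the sum by $a+b\le 2\max\{a,b\}$, which gives the looser prefactor $2\max\{1/\theta^2,(1-1/\theta)^2\}$ --- that crude $2\max$ is where you silently lose a factor of $2$ that the paper never concedes. However, your telescoping is correspondingly tighter: Cauchy--Schwarz gives $\|\nabla f_i(x_k)-\nabla f_i(\varphi_s)\|^2 \le (k-ms)\sum_{\ell}\|\nabla f_i(x_{\ell+1})-\nabla f_i(x_\ell)\|^2$, and after swapping sums the constant is $\tfrac{m(m-1)}{2}$, whereas the paper's geometric recursion with $(1+1/m)^m\le e$ yields the larger constant $3m(m+1)$. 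Multiplying, your final coefficient is $m(m-1)\max\{1/\theta^2,(1-1/\theta)^2\}$, which is below the stated $M_1$ in both regimes, so the BMSE property and memory-biased bound both hold as claimed. The only thing the paper's method buys over yours is that its constants emerge literally in the form stated in the lemma; since $M_1$ only needs to be an upper-bounding coefficient, your proof is fully adequate and even marginally sharper.
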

With these constants established, Theorem \ref{thm:rate_mem} provides rates of convergence.\footnotemark

\begin{corollary}[{Convergence rates for B-SAGA}]
\label{thm:saga}
\footnotetext{{We state the convergence rates without constants for simplicity. The complete result with constants is included in Appendix \ref{sec:saga}.}}Algorithm \ref{alg:general_sgd} achieves the following convergence guarantees using the B-SAGA gradient estimator:
\begin{itemize}[itemsep=-1pt,topsep=2pt]
\item 
If $F$ is convex, depending on the choice of $\theta$, set the step size to
\[
\eta
= \eta_{\theta} 
\defeq 
\left\{
\begin{aligned}
\textstyle \frac{1}{L (1 + \frac{6}{\theta} \sqrt{n (2 n + 1)})} &: \theta \in [1,2] , \\
\textstyle \frac{1}{L (1 + 6 {( 1 - \frac{1}{\theta}) }\sqrt{n ( 2 n + 1 )}) } &: \theta > 2,
\end{aligned}
\right.
\] 
and $\xbar_T$ satisfies $\E [ F(\xbar_{T}) - F(\xsol) ] = \calO(L n /T)  $.

\item
If $F$ is $\mu$-strongly convex, set {$\eta = \min \Ba{ \eta_{\theta}, \frac{1}{4 \mu n} }$}. Then $x_T$ satisfies $\E \ca{ \|x_T - \xsol\|^2 } = \calO( \pa{ 1 + \mu \eta }^{-T} ) $. 

\item 
If $F$ is non-convex, after $T$ iterations, the generalized gradient at $x_\alpha$ satisfies
\beq
\E \ca{ \|\mathcal{G}_{\eta / 2}(x_\alpha)\|^2 }
= 
\left\{
\begin{aligned}
\textstyle \mathcal{O} \left( \frac{L n}{T \theta} \right) &: \textstyle \eta = \frac{\theta}{2 L \sqrt{n (2 n + 1)}}, \ \theta \in (0,2], \\
\textstyle \mathcal{O} \left( \frac{L n}{T {( 1 - \frac{1}{\theta}) } } \right) &: \textstyle \eta = \frac{1}{2 L {( 1 - \frac{1}{\theta}) } \sqrt{n (2 n + 1) }}, \ \theta > 2.
\end{aligned}
\right.
\eeq

\end{itemize}
\end{corollary}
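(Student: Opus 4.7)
The plan is to derive each of the three rates as a direct specialisation of the general convergence results already established—Theorem~\ref{thm:rate_mem} for the convex and strongly convex settings and Theorem~\ref{thm:rate_ncvx} for the non-convex setting—using the memory-bias and BMSE constants for B-SAGA furnished by Lemma~\ref{lem:sagamse}. The starting arithmetic step is to plug $M_2 = 0$, $\rho_M = 1/(2n)$, $\rho_F = 1$, $m = 1$, $B_1 = 2n(2n+1)$ into the composite quantity
\[
\Theta = \frac{M_1 \rho_F + 2 M_2}{\rho_M \rho_F} = 2 n M_1,
\]
and evaluate $\sqrt{2 \Theta}$ in each of the two $\theta$-regimes of Lemma~\ref{lem:sagamse}: $\sqrt{2\Theta} = (2/\theta)\sqrt{n(2n+1)}$ for $\theta \in (0,2]$ and $\sqrt{2\Theta} = 2(1-1/\theta)\sqrt{n(2n+1)}$ for $\theta > 2$. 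In both cases $\sqrt{2\Theta} = \mathcal{O}(n)$.

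For the convex claim, I would substitute these expressions into the step size $\eta = 1/(L(1+3\sqrt{2\Theta}))$ from Theorem~\ref{thm:rate_mem}; this recovers $\eta_\theta$ precisely. The theorem's bound then has two pieces: a leading term $L(1+3\sqrt{2\Theta})\|x_0-\xsol\|^2/(2T) = \mathcal{O}(Ln/T)$, and a tail involving $\max\{B_1(1-1/\theta)/\sqrt{2\Theta}-1,0\}$ divided by $L(1+3\sqrt{2\Theta})$. A direct computation yields $B_1(1-1/\theta)/\sqrt{2\Theta} = \mathcal{O}(n)$ in either regime, so the tail contributes only $\mathcal{O}(1/(LT))$, confirming the $\mathcal{O}(Ln/T)$ rate.

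For the strongly convex claim, the three-way minimum in Theorem~\ref{thm:rate_mem} becomes $\min\{1/(3L(1+3\sqrt{2\Theta})),\ \sqrt{2\Theta}/(B_1 \mu (1-1/\theta)),\ 1/(4\mu n)\}$ using $\rho = 1/(2n)$. A short case analysis shows that the middle term is always at least $1/(\mu \sqrt{n(2n+1)}) \ge 1/(4\mu n)$, hence may be dropped. Collapsing the remaining minimum absorbs the factor of $3$ into $\mathcal{O}$-notation and yields the corollary's choice $\min\{\eta_\theta, 1/(4\mu n)\}$; the geometric rate $(1+\mu\eta)^{-T}$ then carries over verbatim.

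For the non-convex claim, I would invoke Theorem~\ref{thm:rate_ncvx} with the same $\Theta$, verify that the displayed step sizes lie below the maximum $(\sqrt{16\Theta+1}-1)/(16L\Theta)$ (up to a universal constant, so the theorem still applies via monotonicity in $\eta$), and then observe that $4\eta L = \mathcal{O}(1/\sqrt{n(2n+1)})$ so that $(1-4\eta L)$ is bounded away from zero uniformly in $n$. Plugging the displayed $\eta$ into $16(F(x_0)-F(\xsol))/(T\eta(1-4\eta L))$ then yields the claimed orders after simplification. The chief obstacle is the purely book-keeping check that the specific step-size choices in the corollary are compatible with the requirements of each theorem across both $\theta$-regimes, and in particular that the $\theta>2$ branch simplifies to the stated order; once these constants are lined up, everything else reduces to mechanical substitution.
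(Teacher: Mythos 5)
Your proposal is correct and is essentially the paper's own proof: the paper states the corollary simply by ``combining Lemma~\ref{lem:sagamse} with Theorems~\ref{thm:rate_mem} and~\ref{thm:rate_ncvx},'' and you carry out precisely that substitution, correctly computing $\Theta = 2nM_1$, $\sqrt{2\Theta}$ in both $\theta$-regimes, and verifying that the tail term is $\mathcal{O}(1/(LT))$ and the middle argument of the three-way minimum in the strongly convex case always exceeds $1/(4\mu n)$. One small misstatement: in the non-convex part, the corollary's displayed step sizes are actually \emph{larger} than the value $(\sqrt{16\Theta+1}-1)/(16L\Theta)$ in Theorem~\ref{thm:rate_ncvx} (by a factor of roughly $2\sqrt{2}$), not smaller, so ``monotonicity in $\eta$'' is not the right justification; the discrepancy is covered instead by the paper's explicit footnote that constants are dropped in the corollary's statement.
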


\begin{corollary}[{Convergence rates of B-SVRG}]
\label{thm:svrg}
Algorithm \ref{alg:general_sgd} achieves the following convergence guarantees using the B-SVRG gradient estimator:
\begin{itemize}[itemsep=-1pt,topsep=2pt]
\item 
When $F$ is convex, depending on the choice of $\theta$, set the step size to
\[
\eta
= \eta_{\theta} 
= 
\left\{
\begin{aligned}
\textstyle \frac{1}{L ( 1 + \frac{3}{\theta} \sqrt{6 m ( m + 1 )} )} &: \theta \in [1,2] , \\
\textstyle \frac{1}{L ( 1 + 3 ( 1 - \frac{1}{\theta})\sqrt{6 m ( m + 1 ) } )} &: \theta > 2 .
\end{aligned}
\right.
\] 
After $S$ epochs, the point $\xbar_{m S}$ satisfies $\E [ F(\xbar_{m S}) - F(\xsol) ] = \calO(L / S)  $.

\item If, moreover, $F$ is $\mu$-strongly convex, let $\eta = \min \{ \eta_{\theta}, \frac{1}{2 \mu} \}$. After $S$ epochs, $x_{m S}$ satisfies $\E \ca{ \|x_{m S} - \xsol\|^2 } = \calO( \pa{ 1 + \mu \eta }^{-m S} ) $.

\item
If $F$ is non-convex, after $S$ epochs, the generalized gradient at $x_\alpha$ satisfies
\beq
\E \ca{ \|\mathcal{G}_{\eta / 2}(x_\alpha)\|^2 }
= 
\left\{
\begin{aligned}
\textstyle \mathcal{O} \left( \frac{L m}{T \theta} \right) &: \textstyle \eta = \frac{\sqrt{2} \theta}{2 L \sqrt{3 m (m + 1)}}, \ \theta \in (0,2], \\
\textstyle \mathcal{O} \big( \frac{L m}{T {( 1 - {1}/{\theta}) } } \big) &: \textstyle \eta = \frac{\sqrt{2} \theta}{2 L {( 1 - \frac{1}{\theta}) } \sqrt{3 m (m + 1) }}, \ \theta > 2.
\end{aligned}
\right.
\eeq

\end{itemize}
\end{corollary}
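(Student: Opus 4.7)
}
The plan is to treat this corollary as a direct specialization: all three cases follow by plugging the B-SVRG constants from Lemma \ref{lem:svrgmse} into the general convergence theorems, and then simplifying the resulting expressions. The structural observation is already recorded in the text right after the definition of memory-biased estimators, namely that B-SVRG is memory-biased with $\varphi_k^i = \varphi_s$ for all $i$ and all $k$ in epoch $s$. Lemma \ref{lem:svrgmse} then supplies $B_1 = 3m(m+1)$ together with the BMSE constants $\rho_M = \rho_F = 1$, $M_2 = 0$, and the case-dependent value of $M_1$. These plug directly into $\Theta = (M_1 \rho_F + 2 M_2)/(\rho_M \rho_F) = M_1$ and $\rho = \min\{\rho_M,\rho_F\} = 1$, so $\sqrt{2\Theta} = \sqrt{6m(m+1)}/\theta$ on $\theta \in (0,2]$ and $\sqrt{2\Theta} = (1 - 1/\theta)\sqrt{6m(m+1)}$ on $\theta > 2$.

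For the convex case, I would substitute these values into the step size $\eta = 1/(L(1 + 3\sqrt{2\Theta}))$ from Theorem \ref{thm:rate_mem}, which immediately yields the two piecewise expressions for $\eta_\theta$ stated in the corollary. The theorem's bound then gives $\E[F(\bar x_T) - F(x^\star)] = \mathcal{O}(L\sqrt{\Theta}\|x_0 - x^\star\|^2 / T)$; since $\sqrt{\Theta} = \mathcal{O}(m)$ and we measure in epochs via $T = mS$, this collapses to the claimed $\mathcal{O}(L/S)$ rate. For the strongly convex case I would invoke the strongly convex branch of Theorem \ref{thm:rate_mem}, argue that with $\rho=1$ the third term $\rho/(2\mu) = 1/(2\mu)$ remains present while the middle term $\sqrt{2\Theta}/(B_1 \mu (1-1/\theta))$ evaluates to $\mathcal{O}(1/(m\mu))$, which is absorbed into $\eta_\theta$ (or is vacuous at $\theta = 1$ where B-SVRG is unbiased); the resulting $\eta = \min\{\eta_\theta, 1/(2\mu)\}$ gives the stated geometric rate $(1 + \mu\eta)^{-mS}$.

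For the non-convex case I would apply Theorem \ref{thm:rate_ncvx}. The step size $\eta = (\sqrt{16 \Theta + 1} - 1)/(16 L \Theta)$ simplifies, using $\sqrt{16\Theta + 1} - 1 \asymp 4\sqrt{\Theta}$ for large $\Theta$, to $\eta \asymp 1/(4L\sqrt{\Theta})$; plugging in the two cases for $\Theta$ recovers the displayed $\eta = \sqrt{2}\theta/(2L\sqrt{3m(m+1)})$ on $\theta \in (0,2]$ and its $\theta > 2$ analogue. The theorem's bound $\E\|\mathcal{G}_{\eta/2}(x_\alpha)\|^2 \le 16(F(x_0) - F(x^\star))/(T\eta(1 - 4\eta L))$ then yields $\mathcal{O}(1/(T\eta)) = \mathcal{O}(Lm/(T\theta))$ (resp.\ $\mathcal{O}(Lm/(T(1 - 1/\theta)))$), which is the claim.

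The main obstacle I anticipate is purely bookkeeping: handling the piecewise dependence of $M_1$ on whether $\theta \le 2$ or $\theta > 2$ in parallel across all three regimes, and verifying that the middle term in the strongly convex step-size minimum is indeed dominated by $\eta_\theta$ or $1/(2\mu)$ so that the simplified form in the corollary is legitimate. Beyond that, everything reduces to substituting constants and simplifying, with no additional analysis beyond Theorems \ref{thm:rate_mem}, \ref{thm:rate_ncvx} and Lemma \ref{lem:svrgmse}.
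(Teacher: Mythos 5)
Your proposal is correct and matches the paper's own (one-line) proof, which simply notes that combining Lemma \ref{lem:svrgmse} with Theorems \ref{thm:rate_mem} and \ref{thm:rate_ncvx} yields the rates; your substitutions of $\Theta = M_1$, $\rho = 1$, and the case split on $\theta$ are exactly right. The only point worth recording — which you correctly flagged as bookkeeping — is the verification that the middle term $\sqrt{2\Theta}/(B_1\mu(1-1/\theta))$ in the strongly convex step size is dominated by $\eta_\theta$ (which follows from $\mu \le L$ and the cancellation of the $(1-1/\theta)$ factors), justifying the simplified $\min\{\eta_\theta, 1/(2\mu)\}$ in the corollary statement.
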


\begin{remark} $~$
\begin{itemize}[itemsep=-1pt,topsep=2pt]
\item Our MSE bounds and convergence rates are optimised when $\theta = 2$. Numerical experiments (including those in Section \ref{sec:experiments}) suggest that setting $\theta$ in the range $1 < \theta \ll n$ gives the best performance, and B-SAGA prefers larger values of $\theta$ than B-SVRG.
\item
In the special case $\theta = 1$, Corollaries \ref{thm:saga} and \ref{thm:svrg} recover the state-of-the-art rates for SAGA and SVRG in the convex and non-convex regimes. For strongly convex problems, these rates are worse than existing convergence rates of $\mathcal{O} ( (1 + \min \left\{ \frac{\mu}{L}, \frac{1}{n} \right\} )^{-T} )$ proven for SAGA and SVRG \cite{SAGA,proxSVRG}. This difference is due to the generality of Theorem \ref{thm:rate_mem}, as some memory-biased estimators, including B-SVRG, exhibit poor performance on strongly convex problems when the bias is large.
\item 
Corollaries \ref{thm:saga} and \ref{thm:svrg} require step sizes that decrease with $n$, while existing results for SAG, SAGA, and SVRG allow step sizes that are independent of $n$. This is also due to the generality of Theorem \ref{thm:rate_mem}. For example, we find in practice that B-SAGA converges with step sizes that are independent of $n$, but B-SVRG requires smaller step sizes when the epoch length is larger.
\end{itemize}
\end{remark}

\subsubsection{SARAH and SARGE}

The SARAH and SARGE gradient estimators are recursively biased, with
\beq
\nabla f(x_k) - \Ek \ca{\gradSARAH_k} = \nabla f(x_{k-1}) - \gradSARAH_{k-1} .
\eeq
and
\beq
\textstyle \nabla f(x_k) - \Ek \ca{\gradSARGE_k} = (1 - \frac{1}{n} ) ( \nabla f(x_{k-1}) - \gradSARGE_{k-1} ).
\eeq
As we shall see, these biased estimators admit smaller MSE bounds than unbiased and memory-biased estimators, and this is reflected in their improved convergence rates. The following two lemmas establish the constants appearing in Theorem \ref{thm:rate_recurs} for these estimators.

\begin{lemma}
\label{lem:sarahmse}
The SARAH gradient estimator is recursively biased with parameters $\rho_B = 0$ and $\nu = m$, and it satisfies the BMSE property with parameters $M_1 = m$, $\rho_M = 1$, $\rho_F = 1$, and $M_2 = 0$.
\end{lemma}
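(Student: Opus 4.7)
The proof naturally splits into two parts: verifying the recursive-bias identity, and verifying the BMSE bound.

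For the recursive-bias part, I would start from the update rule $\gradSARAH_k = \nabla f_{j_k}(x_k) - \nabla f_{j_k}(x_{k-1}) + \gradSARAH_{k-1}$. Taking conditional expectation over $j_k$ (which is uniform over $\{1,\dotsc,n\}$ and independent of everything produced before iteration $k$) immediately gives $\Ek[\gradSARAH_k] = \nabla f(x_k) - \nabla f(x_{k-1}) + \gradSARAH_{k-1}$, and rearranging produces $\nabla f(x_k) - \Ek[\gradSARAH_k] = \nabla f(x_{k-1}) - \gradSARAH_{k-1}$, which matches Definition \ref{def:recursively_biased} with $\rho_B = 0$. At the reset indices $k\in m\mathbb{N}_0$ the outer loop recomputes $\gradSARAH_{k-1} = \nabla f(\varphi_s) = \nabla f(x_k)$, so the residual is zero and the ``otherwise'' clause is turned off exactly every $\nu = m$ steps.

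For the BMSE part, the plan is to set $e_k \defeq \gradSARAH_k - \nabla f(x_k)$ and decompose $e_k = e_{k-1} + X_k$, where $X_k \defeq \bigl(\nabla f_{j_k}(x_k)-\nabla f_{j_k}(x_{k-1})\bigr) - \bigl(\nabla f(x_k)-\nabla f(x_{k-1})\bigr)$. Because $e_{k-1}$ is measurable with respect to the history before drawing $j_k$ and $\Ek[X_k] = 0$, the cross term vanishes, yielding the orthogonal-increments identity $\Ek\|e_k\|^2 = \|e_{k-1}\|^2 + \Ek\|X_k\|^2$. A variance-$\le$-second-moment bound then gives
\begin{equation}
\Ek\|X_k\|^2 \;\le\; \Ek\|\nabla f_{j_k}(x_k) - \nabla f_{j_k}(x_{k-1})\|^2 \;=\; \tfrac{1}{n}\sum_{i=1}^n \|\nabla f_i(x_k) - \nabla f_i(x_{k-1})\|^2 .
\end{equation}

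To finish, I would iterate this recursion within an epoch. Since the full gradient is computed at the beginning of epoch $s$, the error at that reset point is zero, so telescoping gives $\E\|e_k\|^2 \le \tfrac{1}{n}\sum_{\ell=ms+1}^{k}\sum_{i=1}^n \E\|\nabla f_i(x_\ell)-\nabla f_i(x_{\ell-1})\|^2$. Summing over the $m$ indices $k = ms,\dotsc,m(s+1)-1$ and reindexing $\ell-1 \mapsto k$ produces a factor of at most $m$ in front and the standard summand $\frac{1}{n}\sum_i \E\|\nabla f_i(x_{k+1})-\nabla f_i(x_k)\|^2$, so we may take $\calM_{ms}$ equal to the right-hand side of \eqref{eq:mseb} with $M_1 = m$, $\rho_M = 1$, $M_2 = 0$, $\rho_F = 1$, and $\calF_{ms} \equiv 0$. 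The main thing to keep an eye on is the indexing: checking that the reset ($e_{ms}=0$) lines up with the convention $k\in m\mathbb{N}_0$ in Definition \ref{def:recursively_biased}, and that the shift from $(x_\ell,x_{\ell-1})$ to $(x_{k+1},x_k)$ in the BMSE statement still permits bounding the nested double-sum by $m$ copies of the epoch sum. Everything else is a direct conditional-expectation calculation.
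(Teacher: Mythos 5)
Your argument is correct and follows essentially the same route as the paper. The only difference is that the paper cites \cite{spider} for the telescoping MSE bound $\|\gradSARAH_k - \nabla f(x_k)\|^2 \le \tfrac{1}{n}\sum_{\ell}\sum_i \|\nabla f_i(x_{\ell+1}) - \nabla f_i(x_\ell)\|^2$ rather than re-deriving it, whereas you reproduce the standard orthogonal-increments derivation; the epoch-summation step giving the factor $m$ (and hence $M_1 = m$, $\rho_M=\rho_F=1$, $M_2=0$) matches the paper's estimate \eqref{eq:sarahest}, and your explicit conditional-expectation check of the recursive-bias identity with $\rho_B=0$, $\nu=m$ is the natural verification the paper leaves implicit.
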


\begin{lemma}
\label{lem:sargemse}
The SARGE gradient estimator is recursively biased with parameters $\rho_B = 1 / n$ and $\nu = \infty$, and it satisfies the BMSE property with $M_1 = 12$, $M_2 = 39 / n$, $\rho_M = \frac{1}{4 n}$, $\rho_F = \frac{1}{2 n}$, and $m = 1$.
\end{lemma}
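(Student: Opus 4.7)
The plan is to establish the two assertions in the lemma in sequence. First, for the recursive-bias property, I compute $\Ek[\gradSARGE_k]$ directly from the SARGE definition. Uniform sampling of $j_k$ yields $\Ek[\nabla f_{j_k}(x_k)] = \nabla f(x_k)$, $\Ek[\psi_k^{j_k}] = \tfrac{1}{n}\sum_i \psi_k^i$, and $\Ek[\nabla f_{j_k}(x_{k-1})] = \nabla f(x_{k-1})$. The two storage terms cancel identically, leaving $\Ek[\gradSARGE_k] = \nabla f(x_k) - (1-\tfrac{1}{n})(\nabla f(x_{k-1}) - \gradSARGE_{k-1})$, which matches Definition~\ref{def:recursively_biased} with $\rho_B = 1/n$ and $\nu = \infty$, since the full gradient is never computed.

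For the BMSE property, let $e_k \defeq \gradSARGE_k - \nabla f(x_k)$. Subtracting the identity above from the definition of $\gradSARGE_k$ gives $e_k = (1-\tfrac{1}{n}) e_{k-1} + (h_{j_k} - \bar h)$, where $h_i \defeq \nabla f_i(x_k) - (1-\tfrac{1}{n})\nabla f_i(x_{k-1}) - \psi_k^i$ and $\bar h \defeq \tfrac{1}{n}\sum_i h_i$. The innovation $h_{j_k} - \bar h$ has zero conditional mean while $e_{k-1}$ is determined by the history through step $k-1$, so by the tower property $\E\|e_k\|^2 = \E\|h_{j_k} - \bar h\|^2 + (1-\tfrac{1}{n})^2 \E\|e_{k-1}\|^2$. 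The sample-variance bound $\Ek\|h_{j_k} - \bar h\|^2 \le \tfrac{1}{n}\sum_i \|h_i\|^2$ together with $(1-\tfrac{1}{n})^2 \le 1 - \tfrac{1}{4n}$ supplies the required $\rho_M = 1/(4n)$ decay; what remains is to absorb $\tfrac{1}{n}\sum_i \E\|h_i\|^2$ into $\calF_k$ plus a single current-step gradient-difference term controlled by $M_1$.

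For that absorption I introduce $Y_k^i \defeq \|h_i\|^2$ and derive a one-step recursion by conditioning on whether $j_k = i$. The update $\psi_{k+1}^{j_k} = g_k^{j_k}$ with $g_k^i \defeq \nabla f_i(x_k) - (1-\tfrac{1}{n})\nabla f_i(x_{k-1})$ gives $\Ek[Y_{k+1}^i] = \tfrac{1}{n}\|g_{k+1}^i - g_k^i\|^2 + (1-\tfrac{1}{n})\|g_{k+1}^i - \psi_k^i\|^2$. Splitting $g_{k+1}^i - \psi_k^i = (g_{k+1}^i - g_k^i) + (g_k^i - \psi_k^i)$ and applying Young's inequality $\|a+b\|^2 \le (1+c)\|a\|^2 + (1 + 1/c)\|b\|^2$ with $c = 2(n-1)$ pins the self-coefficient of $Y_k^i$ to exactly $1 - \tfrac{1}{2n}$. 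The forcing term $\|g_{k+1}^i - g_k^i\|^2 = \|(\nabla f_i(x_{k+1}) - \nabla f_i(x_k)) - (1-\tfrac{1}{n})(\nabla f_i(x_k) - \nabla f_i(x_{k-1}))\|^2$ is then bounded via $\|a-b\|^2 \le 2\|a\|^2 + 2\|b\|^2$ by a constant combination of the one-step gradient differences $\|\nabla f_i(x_{k+1}) - \nabla f_i(x_k)\|^2$ and $\|\nabla f_i(x_k) - \nabla f_i(x_{k-1})\|^2$.

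Unrolling the $Y$-recursion and averaging over $i$ produces the geometric-weighted sum over past gradient differences that $\calF_k$ requires, with decay rate $\rho_F = 1/(2n)$, while peeling off the most recent summand supplies the $M_1$ contribution. The main obstacle is the bookkeeping in the last two paragraphs: the choice $c = 2(n-1)$ is forced by the need to match $\rho_F = 1/(2n)$, and carefully tracking the interaction between this $c$, the $(1-\tfrac{1}{n})$ prefactors, the factor of $2$ from Young's expansion of $g_{k+1}^i - g_k^i$, and the averaging over $i$ is what pins down the specific numerical constants $M_1 = 12$ and $M_2 = 39/n$. No individual step is difficult, but the stated constants emerge only after the correct value of $c$ is selected and the bookkeeping is performed faithfully.
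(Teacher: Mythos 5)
Your recursive-bias verification is correct and is essentially the same computation the paper performs. Your orthogonal decomposition $e_k = (1-\tfrac{1}{n})\,e_{k-1} + (h_{j_k}-\bar h)$ with $\E_k[h_{j_k}-\bar h]=0$ is a genuinely cleaner route than the paper's SARAH-style cross-term expansion (Lemma~\ref{lem:sargerecurse}); it gives the exact decay $(1-\tfrac1n)^2$ where the paper settles for the looser $1-\tfrac1n+\tfrac{3}{2n^2}$.

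The $Y$-recursion, however, has a fatal quantitative gap. First, a fixable issue: the claimed factorization $\E_k[Y_{k+1}^i] = \tfrac1n\|g_{k+1}^i - g_k^i\|^2 + (1-\tfrac1n)\|g_{k+1}^i - \psi_k^i\|^2$ is not valid, because $g_{k+1}^i$ depends on $x_{k+1}$ and hence on $j_k$; one must apply Young's to $\|g_{k+1}^i - \psi_{k+1}^i\|^2$ first and then observe that $g_k^i - \psi_{k+1}^i$ vanishes when $j_k=i$, which is the piece that is actually $\mathcal F_{k-1}$-measurable. The deeper problem is the size of the resulting constants. To obtain the decay $(1+\tfrac1c)(1-\tfrac1n) \le 1-\tfrac{1}{2n}$ you are forced to take $c=\mathcal O(n)$, so the Young coefficient $1+c$ in front of $\|g_{k+1}^i - g_k^i\|^2$ is $\mathcal O(n)$. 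Combined with the naive expansion $\|g_{k+1}^i - g_k^i\|^2 \le 2\|\nabla f_i(x_{k+1})-\nabla f_i(x_k)\|^2 + 2\|\nabla f_i(x_k)-\nabla f_i(x_{k-1})\|^2$, unrolling the $Y$-recursion yields a forcing coefficient of order $n$ on each $\tfrac1n\sum_i\E\|\nabla f_i(x_{\ell+1})-\nabla f_i(x_\ell)\|^2$ term, i.e.\ $M_1 = \mathcal O(n)$ and $M_2 = \mathcal O(n)$, not the claimed $M_1=12$, $M_2=39/n$. That is an $\mathcal O(n^2)$ discrepancy; carried into $\Theta = \tfrac{M_1\rho_F+2M_2}{\rho_M\rho_F}$ it gives $\Theta=\mathcal O(n^3)$ rather than $\mathcal O(n)$, which would push the convex complexity from $\mathcal O(\sqrt n L/\epsilon)$ up to $\mathcal O(n^{3/2} L/\epsilon)$ and erase SARGE's advertised advantage.

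The loss is structural, not a bookkeeping slip: the naive bound on $\|g_{k+1}^i - g_k^i\|^2$ ignores that $g_{k+1}^i - g_k^i = [\nabla f_i(x_{k+1})-\nabla f_i(x_k)] - (1-\tfrac1n)[\nabla f_i(x_k)-\nabla f_i(x_{k-1})]$ telescopes (it is roughly $\tfrac1n$ the size of a single gradient difference when iterates drift steadily), and the $Y$-recursion then amplifies this by another factor $\mathcal O(n)$. The paper sidesteps both by never tracking $\|g_k^i - \psi_k^i\|^2$ directly. Instead it writes $\gradSARGE_k - \gradSARGE_{k-1} = (\gradSAGA_k - \tnabla^{\xi\textnormal{\tiny-SAGA}}_k) + \tfrac1n(\tnabla^{\xi\textnormal{\tiny-SAGA}}_k - \gradSARGE_{k-1})$ (Lemma~\ref{lem:vbound}): the first term involves only single-step stored gradient differences $\nabla f_i(\varphi_k^i)-\nabla f_i(\xi_k^i)$, where $\varphi_k^i$ and $\xi_k^i$ are consecutive iterates, so it carries no accumulated staleness; the second term inherits the explicit $1/n^2$ factor after squaring. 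This keeps the $\mathcal O(1/n)$ scale in the algebra instead of trying to recover it from a contraction argument, and that is what makes $M_1=12$, $M_2=39/n$ achievable.
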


\noindent Proofs of these results are included in Appendices \ref{sec:sarah} and \ref{sec:sarge}, respectively. It is enlightening to compare these BMSE constants to those of B-SVRG and B-SAGA. $M_1$ is a factor of $n$ smaller for the SARAH and SARGE estimators than for the B-SVRG and B-SAGA estimators (as long as $m = \mathcal{O}(n)$ in SARAH and B-SVRG). This translates to an $\mathcal{O}(\sqrt{n})$ improvement in the convergence rates for SARAH and SARGE.

\begin{corollary}[{Convergence rates for SARAH}]
\label{thm:sarah}
When using the SARAH gradient estimator in Algorithm \ref{alg:general_sgd}, 
\begin{itemize}[itemsep=-1pt,topsep=2pt]
\item If $F$ is convex, set $\eta = \tfrac{1}{L (4 \sqrt{2 m} + 1)}$. After $T$ iterations, $\xbar_T$ satisfies $\E [ F(\xbar_{T}) - F(\xsol) ] = \calO(L \sqrt{m} / T)$. 

\item
If $F$ is $\mu$-strongly convex, set $\eta = \min \{ \frac{1}{3 L (4 \sqrt{2 m} + 1)}, \frac{1}{\mu m} \}$,
then $\E \ca{ \|x_T - \xsol\|^2 } = \calO( \pa{ 1 + \mu \eta }^{-T} ) $.

\item 
If $F$ is non-convex, set $\eta = \frac{1}{L \sqrt{2 m}}$, then $\E \ca{ \|\mathcal{G}_{\eta / 2}(x_\alpha)\|^2 } \le \mathcal{O} \left( L \sqrt{m} / T \right)$
\end{itemize}
\end{corollary}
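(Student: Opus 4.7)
The plan is to derive Corollary \ref{thm:sarah} as a direct specialization of the two general convergence theorems (Theorem \ref{thm:rate_recurs} for the convex and strongly convex cases, Theorem \ref{thm:rate_ncvx} for the non-convex case) by substituting the constants supplied by Lemma \ref{lem:sarahmse}. First I would record that, since Lemma \ref{lem:sarahmse} gives $M_1 = m$, $M_2 = 0$, $\rho_M = \rho_F = 1$, the aggregate quantity $\Theta = \frac{M_1 \rho_F + 2 M_2}{\rho_M \rho_F}$ appearing in both theorems evaluates to $\Theta = m$, and $\rho = \min\{\rho_M,\rho_F\} = 1$. Since SARAH is recursively biased with $\rho_B = 0$ and $\nu = m$, the constant from Theorem \ref{thm:rate_recurs} is $B_2 = \min\{\nu, 1/\rho_B\} = m$.

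For the convex case, the prescribed step size in Theorem \ref{thm:rate_recurs} becomes $\eta = \frac{1}{L(4\sqrt{2\Theta}+1)} = \frac{1}{L(4\sqrt{2m}+1)}$, matching the corollary. Substituting $\Theta=m$, $\rho_B=0$, $B_2=m$ into the convex bound of Theorem \ref{thm:rate_recurs} yields
\beq
\E[F(\bar{x}_T) - F(\xsol)] \le \tfrac{1}{T}\Pa{ \tfrac{L(4\sqrt{2m}+1)}{2}\|x_0-\xsol\|^2 + (m-1)\tfrac{F(x_0)-F(\xsol)}{L(4\sqrt{2m}+1)} } ,
\eeq
which is $\calO(L\sqrt{m}/T)$ after absorbing constants (the second term scales like $\sqrt{m}/L$ and the first like $L\sqrt{m}$).

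For the strongly convex case, Theorem \ref{thm:rate_recurs} prescribes $\eta = \min\{\tfrac{1}{3L(4\sqrt{2m}+1)}, \tfrac{1}{\mu(1-\rho_B)B_2}, \tfrac{\rho}{2\mu}\}$. With $\rho_B=0$, $B_2=m$, $\rho=1$, the middle term becomes $\tfrac{1}{\mu m}$ and the last term $\tfrac{1}{2\mu}$. For any $m\ge 2$, $\tfrac{1}{\mu m} \le \tfrac{1}{2\mu}$, so the third term is redundant and the step size simplifies to $\eta = \min\{\tfrac{1}{3L(4\sqrt{2m}+1)}, \tfrac{1}{\mu m}\}$, as stated. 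The convergence bound of Theorem \ref{thm:rate_recurs} then gives the claimed $\calO((1+\mu\eta)^{-T})$ rate directly.

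For the non-convex case, Theorem \ref{thm:rate_ncvx} uses $\eta = \frac{\sqrt{16\Theta+1}-1}{16L\Theta}$, which for $\Theta=m$ is of order $\tfrac{1}{L\sqrt{m}}$; the corollary's choice $\eta = \tfrac{1}{L\sqrt{2m}}$ is of the same order and still respects the step-size constraint $1-4\eta L > 0$ embedded in the theorem (indeed $4\eta L = 4/\sqrt{2m}$ is bounded away from $1$ for $m$ large enough, and can be absorbed into the constant). Plugging this $\eta$ into $\E\ca{\|\mathcal{G}_{\eta/2}(x_\alpha)\|^2} \le \tfrac{16(F(x_0)-F(\xsol))}{T\eta(1-4\eta L)}$ from Theorem \ref{thm:rate_ncvx} produces the $\calO(L\sqrt{m}/T)$ rate. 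Since each piece is a substitution into an already proved theorem, the only real obstacle is verifying that the stated step sizes satisfy the admissibility conditions of the parent theorems (in particular, ensuring the non-convex choice keeps $1-4\eta L$ bounded below by a positive constant so that the $\calO$-rate is genuine), and that minor simplifications in the $\min$ expressions are consistent with the general formulas.
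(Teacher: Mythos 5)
Your proposal follows exactly the route the paper takes: establish the BMSE constants and recursive-bias parameters via Lemma~\ref{lem:sarahmse}, compute $\Theta = m$, $\rho = 1$, $B_2 = \min\{\nu,1/\rho_B\} = m$, and then read off the convex and strongly convex rates from Theorem~\ref{thm:rate_recurs} and the non-convex rate from Theorem~\ref{thm:rate_ncvx}. The paper's appendix gives precisely this one-line justification (it even omits the reference to Theorem~\ref{thm:rate_ncvx}, which you correctly supply), so the approach is the same and the convex and strongly convex instantiations are handled correctly, including your observation that $\tfrac{\rho}{2\mu}=\tfrac{1}{2\mu}$ is dominated by $\tfrac{1}{\mu m}$.

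There is, however, a gap in the non-convex bullet that you flag but do not actually close. You state that the corollary's $\eta = \tfrac{1}{L\sqrt{2m}}$ "respects the step-size constraint $1-4\eta L > 0$", but that is only the \emph{weaker} of the two conditions appearing in the proof of Theorem~\ref{thm:rate_ncvx}; the binding condition is
\[
\eta \le \frac{\sqrt{16\Theta+1}-1}{16 L \Theta}\;\approx\;\frac{1}{4L\sqrt{\Theta}},
\]
needed to make the coefficient $\tfrac{L}{2}+4\Theta\eta L^2-\tfrac{1}{4\eta}$ non-positive. With $\Theta=m$, the choice $\eta=\tfrac{1}{L\sqrt{2m}}$ is about $2\sqrt{2}$ times larger than this threshold, so the coefficient stays positive and the last term cannot be dropped; even the weaker condition $1-4\eta L>0$ fails for $m\le 8$. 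To make the derivation airtight you would either need to reduce the step size to (say) $\tfrac{1}{4L\sqrt{m}}$, or track the offending term separately. This discrepancy is not unique to your argument — the paper's corollary statements for B-SAGA and B-SVRG in the non-convex case carry the same factor-of-$2\sqrt{2}$ slack relative to Theorem~\ref{thm:rate_ncvx}'s prescription — but since your proposal explicitly names step-size admissibility as "the only real obstacle," it is worth noting that the check you perform does not establish it.
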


\begin{corollary}[{Convergence rates for SARGE}]\label{thm:sarge}
When using the SARGE gradient estimator in Algorithm \ref{alg:general_sgd},
\begin{itemize}[itemsep=-1pt,topsep=2pt]
\item If $F$ is convex, set $\eta = \tfrac{1}{L (16 \sqrt{3 (n + 13)} + 1)}$, then $\E [ F(\xbar_{T}) - F(\xsol) ] = \calO(L \sqrt{n} / T) $. 

\item
If $F$ is $\mu$-strongly convex, set $\eta = \min \{ \frac{1}{3 L (16 \sqrt{3 (n + 13)} + 1)}, \frac{1}{4 \mu n} \}$, then $\E \ca{ \|x_T - \xsol\|^2 } = \calO( \pa{ 1 + \mu \eta }^{-T} ) $.

\item
If $F$ is non-convex, set $\eta = \frac{1}{4 L \sqrt{3 (n+13)}}$, then $\E \ca{ \|\mathcal{G}(x_\alpha)\|^2 } \le \mathcal{O} \left( L \sqrt{n} / T \right)$.

\end{itemize}
\end{corollary}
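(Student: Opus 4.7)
The plan is to specialize Theorems \ref{thm:rate_recurs} and \ref{thm:rate_ncvx} to SARGE by plugging in the explicit constants furnished by Lemma \ref{lem:sargemse}: the recursive-bias parameters $\rho_B = 1/n$, $\nu = \infty$, and the BMSE constants $M_1 = 12$, $M_2 = 39/n$, $\rho_M = 1/(4n)$, $\rho_F = 1/(2n)$, $m = 1$. A direct computation gives $\Theta = (M_1 \rho_F + 2 M_2)/(\rho_M \rho_F) = \mathcal{O}(n)$ and $\rho = \min\{\rho_M,\rho_F\} = 1/(4n)$. Crucially, $\rho_B = 1/n$ together with $\nu = \infty$ yields $B_2 = \min\{\nu, 1/\rho_B\} = n$, so $(1-\rho_B) B_2 = n - 1 = \mathcal{O}(n)$, which will be the decisive factor controlling the contribution of the initial objective gap to the convex rate.

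For the convex case, I would substitute these parameters into Theorem \ref{thm:rate_recurs}. After tracking the numerical constants carefully, the prescribed step size $\eta = 1/(L(4\sqrt{2\Theta}+1))$ matches the stated $1/(L(16\sqrt{3(n+13)}+1)) = \Theta(1/(L\sqrt{n}))$. Both terms in the convex rate of Theorem \ref{thm:rate_recurs}—the $L\sqrt{n}\|x_0-\xsol\|^2/(2T)$ initial-distance contribution and the $\mathcal{O}(\sqrt{n}(F(x_0)-F(\xsol))/(LT))$ bias contribution arising from $\max\{(1-\rho_B)B_2-1,0\}/(L(4\sqrt{2\Theta}+1))$—are $\mathcal{O}(L\sqrt{n}/T)$. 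For the strongly convex case, the same $\sqrt{2\Theta}$ scaling combined with $1/(\mu(1-\rho_B)B_2) = 1/(\mu(n-1))$ and $\rho/(2\mu) = 1/(8\mu n)$ collapses the $\min$ of three candidates in Theorem \ref{thm:rate_recurs} down to the stated $\min\{\Theta(1/(L\sqrt{n})), \Theta(1/(\mu n))\}$, whence the geometric bound $(1 + \mu \eta)^{-T}$ follows immediately.

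For the non-convex case, I would apply Theorem \ref{thm:rate_ncvx} with the same $\Theta = \mathcal{O}(n)$. Since $\sqrt{16\Theta+1} - 1 \approx 4\sqrt{\Theta}$ for all but the smallest $n$, the prescribed $\eta = (\sqrt{16\Theta+1}-1)/(16L\Theta)$ reduces to $1/(4L\sqrt{3(n+13)}) = \Theta(1/(L\sqrt{n}))$ after carrying through the numerical constants, and the theorem's bound $\E\|\mathcal{G}_{\eta/2}(x_\alpha)\|^2 \le 16(F(x_0)-F(\xsol))/(T\eta(1-4\eta L))$ immediately gives the claimed $\mathcal{O}(L\sqrt{n}/T)$ rate, and hence an oracle complexity of $\mathcal{O}(\sqrt{n}L/\epsilon^2)$.

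The substantive analysis is entirely carried by Lemma \ref{lem:sargemse} and the general Theorems \ref{thm:rate_recurs} and \ref{thm:rate_ncvx}; the corollary is then pure algebraic substitution. I expect no conceptual obstacle, only the arithmetic bookkeeping required to extract the precise constants inside $\sqrt{3(n+13)}$ from the BMSE parameters (retaining exact coefficients rather than only $\mathcal{O}(n)$ orders), to verify that the three candidates in the strongly-convex $\min$ collapse to the two displayed in the statement (i.e., that $\rho/(2\mu) = 1/(8\mu n)$ is absorbed by $1/(4\mu n)$), and to check that $1 - 4\eta L$ in the non-convex bound stays bounded away from zero for the prescribed step size.
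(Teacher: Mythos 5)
Your proposal is exactly the paper's intended argument: specialize Theorems \ref{thm:rate_recurs} and \ref{thm:rate_ncvx} using the BMSE parameters $M_1=12$, $M_2=39/n$, $\rho_M=1/(4n)$, $\rho_F=1/(2n)$ and the recursive-bias parameters $\rho_B=1/n$, $\nu=\infty$ (hence $B_2=n$) from Lemma \ref{lem:sargemse}. Note only that a literal substitution gives $\Theta = M_1/\rho_M + 2M_2/(\rho_M\rho_F) = 48n + 624n = 672n$, which does not reproduce the exact form $16\sqrt{3(n+13)}$ appearing in the corollary's step sizes (consistency would require $\Theta = 24(n+13)$); this is a constant-factor inconsistency in the paper, not a gap in your plan, and it does not affect the claimed $\mathcal{O}$-rates.
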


These convergence rates for convex objectives represent a significant improvement over the performance of SAGA, SVRG, and full-gradient methods. Each of these algorithms require $\mathcal{O}(\frac{n L}{\epsilon})$ stochastic gradient evaluations to find a point satisfying $F(x_T) - F(\xsol) \le \epsilon$, while SARAH and SARGE require only $\mathcal{O}( \frac{\sqrt{n} L}{\epsilon} )$. These rates do not require the epoch-doubling procedure of \cite{svrgplusplus}, although epoch-doubling can potentially be used to improve the performance of SARAH just as it improves the performance of SVRG on non-strongly convex objectives.

This square-root dependence on $n$ is present in the convergence rates for strongly convex and non-convex objectives as well, which is a significant improvement over the dependence on $n$ in the convergence rates of B-SAGA and B-SVRG. This better dependence on $n$ is most significant in the non-convex regime, where these convergence rates imply that the SARAH and SARGE gradient estimators require only $\mathcal{O}( \frac{\sqrt{n} L}{\epsilon} )$ stochastic gradient evaluations to find an $\epsilon$-approximate stationary point, which is the oracle-complexity lower bound \cite{spider}. Similar results already exist for algorithms using the SARAH estimator \cite{spider,spiderm,spiderboost,proxsarah}. Our results for SARGE show that achieving this complexity is possible without ever computing the full gradient.

\section{Numerical Experiments}\label{sec:experiments}

In this section, we present numerical experiments testing B-SAGA, B-SVRG, SARAH, and SARGE for minimizing convex, strongly convex, and non-convex objectives. We include one set of experiments comparing different values of $\theta$ in B-SAGA and B-SVRG with a fixed step size and one set comparing SARAH and SARGE to B-SAGA and B-SVRG with the best values of $\theta$.

\subsection{Convex and strongly convex objectives}

Let $(h_i, l_i) \in \mathbb{R}^p \times \{\pm 1\} ,~ i=1,\cdots,n$ be the training set, where $h_{i} \in \mathbb{R}^p$ is the feature vector of each data sample, and $l_{i}$ is the binary label. Let $\beta > 0$ be a tuning parameter. The ridge regression problem takes the form
\beq
\min_{x \in \mathbb{R}^p } \quad \tfrac{1}{n} \msum_{i=1}^n ( h_i^\top x - l_i )^2 + \tfrac{\beta}{2} \norm{x}_{2}^{2}.
\eeq
LASSO is similar, but with the regulariser $\norm{x}_1$ replacing $\norm{x}_2^2$. These problems are of the form \eqref{eq:mainopt}, where we set $f_i = (h_i^\top x - l_i)^2$ and $g$ equal to the regulariser. In ridge regression, $g$ is strongly convex, and in LASSO, $g$ is only convex.

We consider four binary classification data sets: \texttt{australian}, \texttt{mushrooms}, \texttt{phishing}, and \texttt{ijcnn1} from LIBSVM\footnote{\url{https://www.csie.ntu.edu.tw/~cjlin/libsvmtools/datasets/}}. We rescale the value of the data to $[-1, 1]$, set $\beta = 1 / n$, and set the step size to $\eta = \frac{1}{5 L}$. 
To compare performance, we use the objective function value $F(x_k) - F(\xsol)$ is considered.

\paragraph{Comparison of B-SAGA}
We first compare the performance of B-SAGA under different choices of $\theta$ for solving ridge regression and LASSO problems. 
Four choices of $\theta$ are considered: $\theta \in \{1,10,100,n\}$, the results are provided below in Figures \ref{fig:saga-ridge} and \ref{fig:saga-lasso}, from which we observe that B-SAGA consistently performs better with moderate amounts of bias (i.e. $\theta \in (1,n)$). For the considered datasets, overall $\theta = 10$ provides the best performance.

\begin{figure}[!ht]
\centering
\subfloat[\texttt{australian}]{ \includegraphics[width=0.235\linewidth]{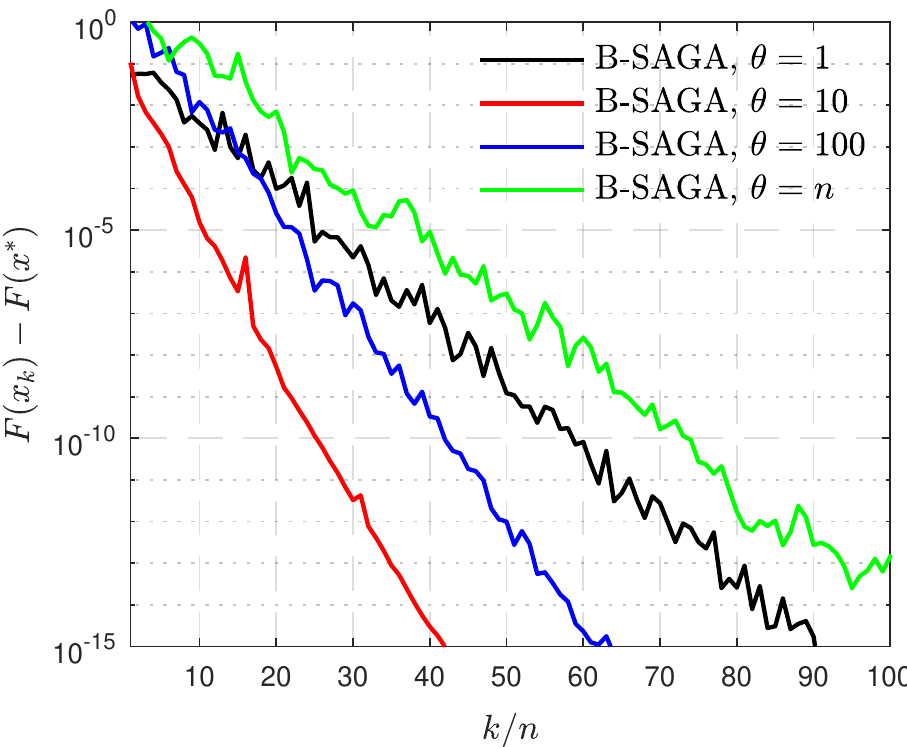} }  
\subfloat[\texttt{mushrooms}]{ \includegraphics[width=0.235\linewidth]{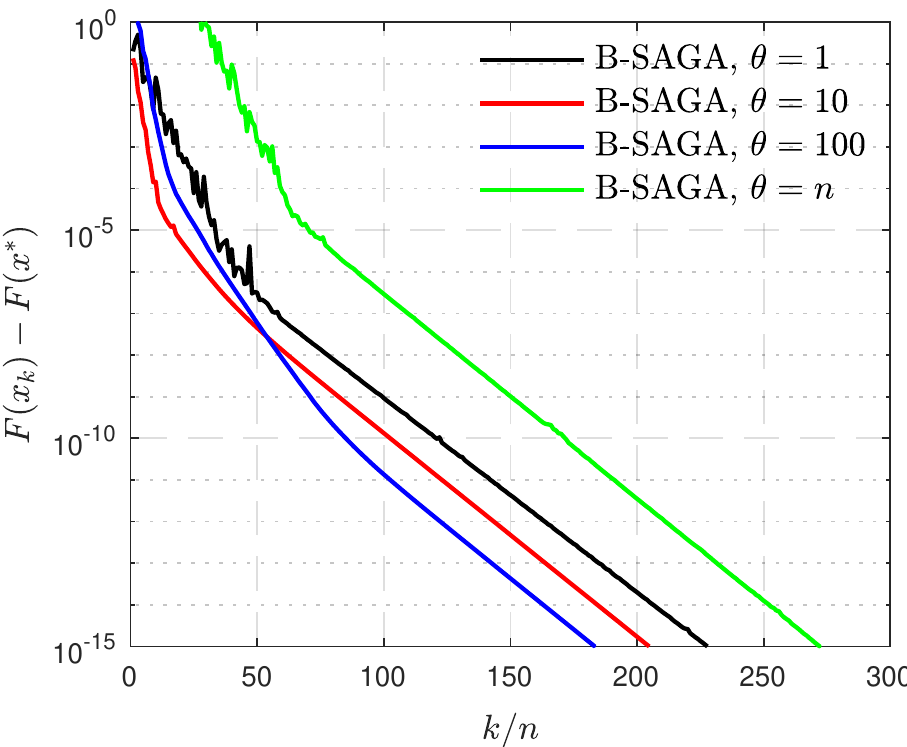} } 
\subfloat[\texttt{phishing}]{ \includegraphics[width=0.235\linewidth]{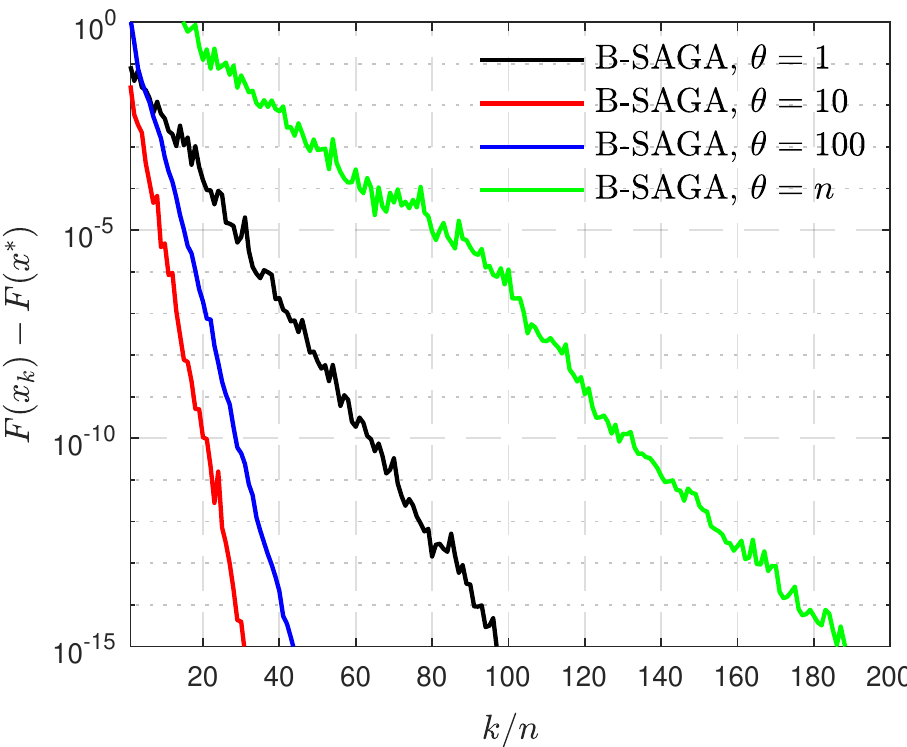} }   
\subfloat[\texttt{ijcnn1}]{ \includegraphics[width=0.235\linewidth]{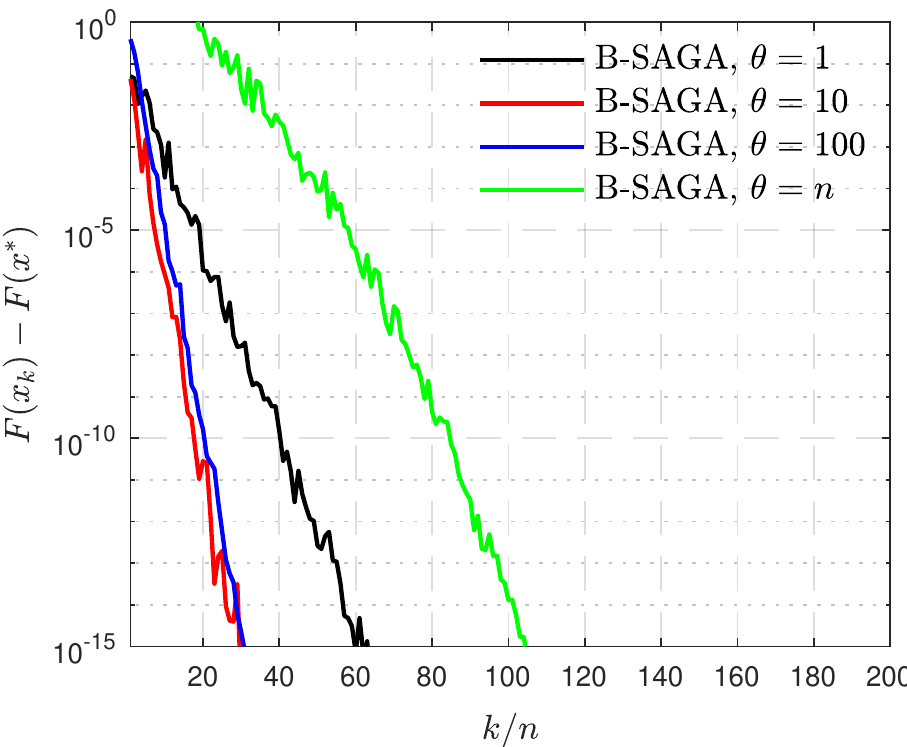} }  \\
\caption{
Performance comparison fitting a ridge regression model for different choices of $\theta$ in B-SAGA. The step size for each case is set to $\eta = \frac{1}{5 L}$. 
}
\label{fig:saga-ridge}
\end{figure}

\begin{figure}[!ht]
\centering
\subfloat[\texttt{australian}]{ \includegraphics[width=0.235\linewidth]{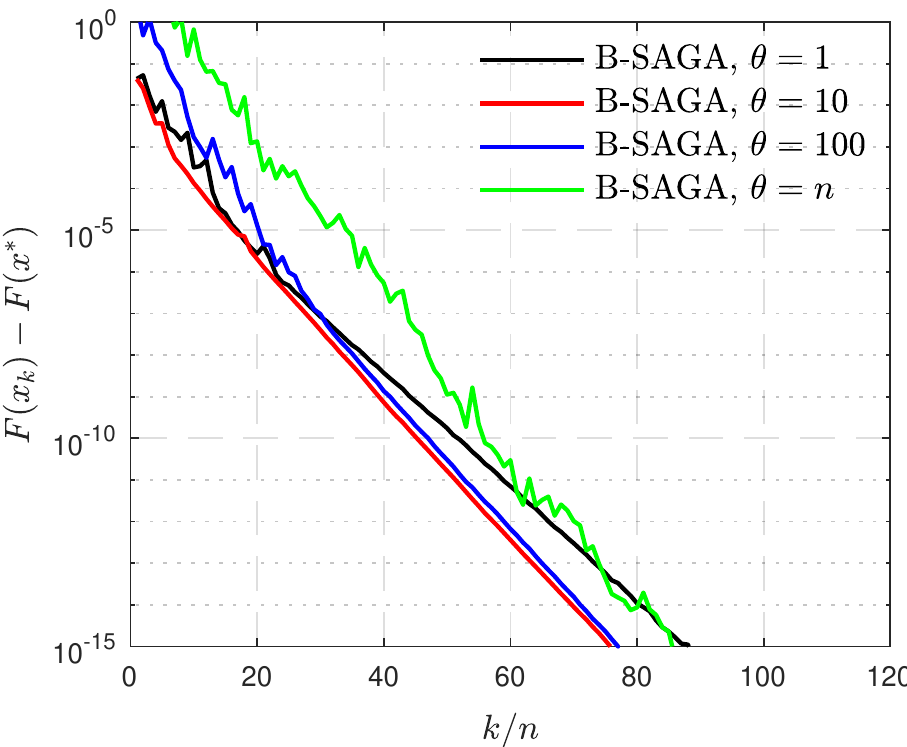} }  
\subfloat[\texttt{mushrooms}]{ \includegraphics[width=0.235\linewidth]{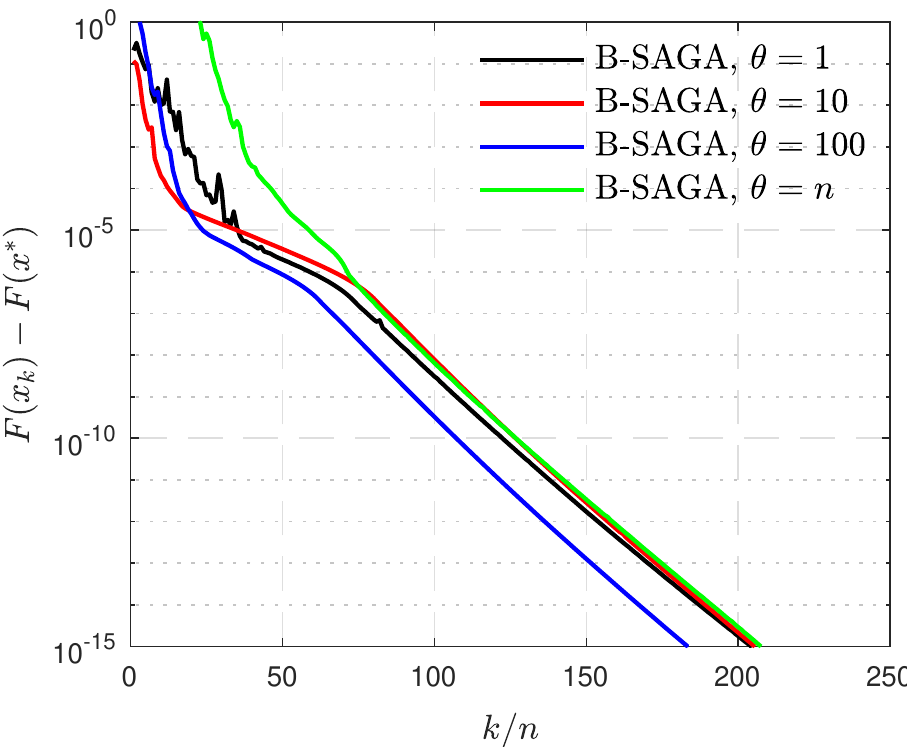} } 
\subfloat[\texttt{phishing}]{ \includegraphics[width=0.235\linewidth]{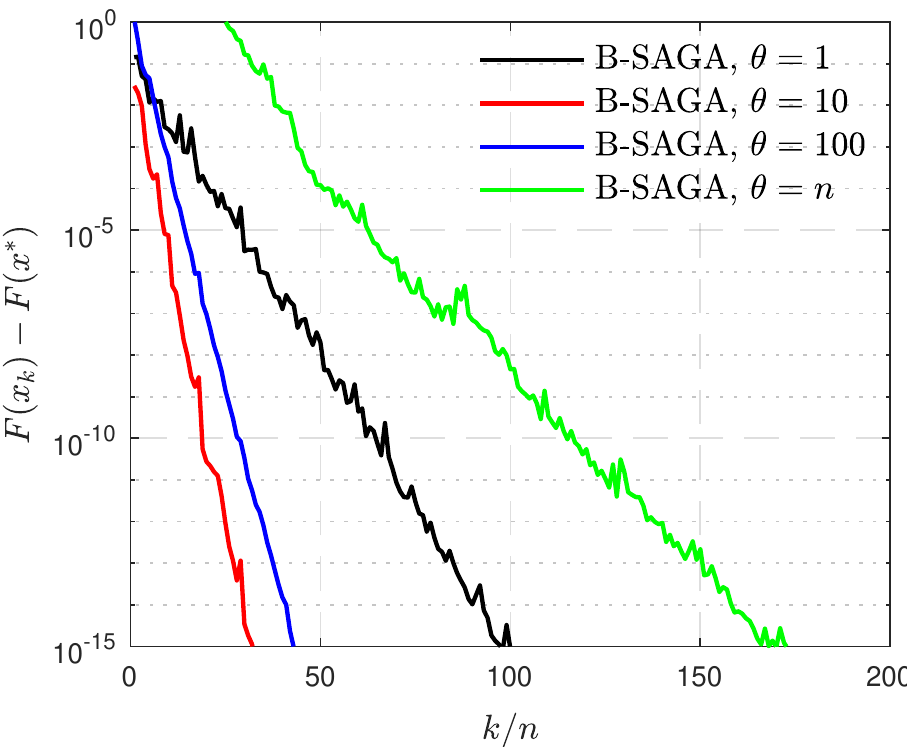} }   
\subfloat[\texttt{ijcnn1}]{ \includegraphics[width=0.235\linewidth]{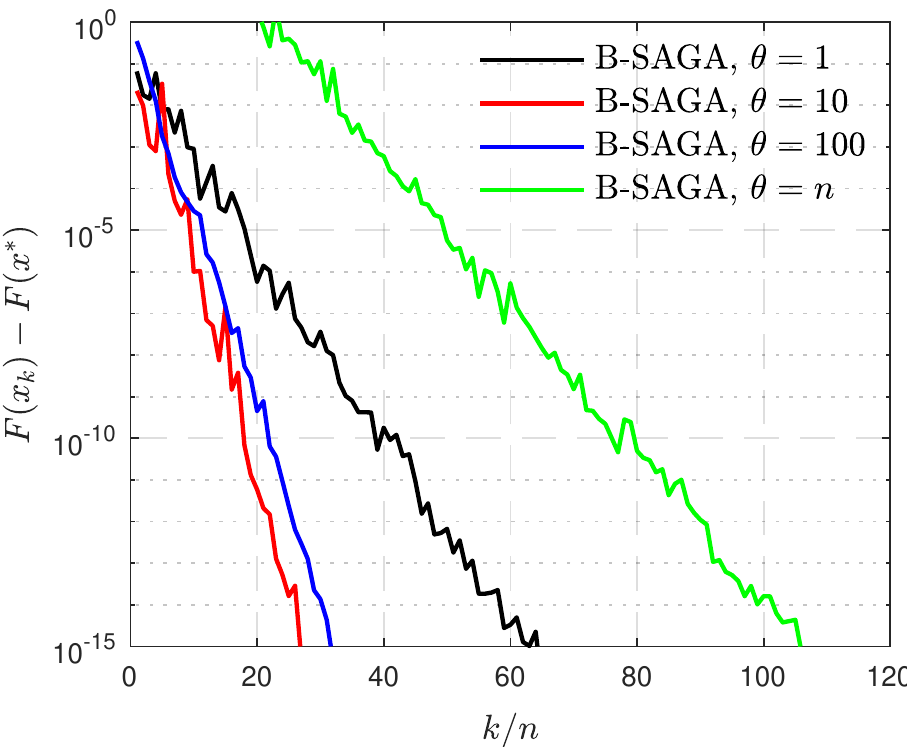} }  \\
\caption{
Performance comparison fitting a LASSO model for different choices of $\theta$ in B-SAGA. The step size for each case is set to $\eta = \frac{1}{5 L}$. 
}
\label{fig:saga-lasso}
\end{figure}

\paragraph{Comparison of B-SVRG}
We also consider four choices of $\theta$ for B-SVRG, which are
$\theta \in \{ 0.5, 0.8, 1, 1.5\}$. The results are shown below in Figure \ref{fig:svrg-ridge} and \ref{fig:svrg-lasso}. We observe that B-SVRG is more sensitive to the choice of $\theta$; only small amounts of bias (i.e. $\theta \in [0.8, 1.5]$) can occasionally improve performance. 

\begin{figure}[!ht]
\centering
\subfloat[\texttt{australian}]{ \includegraphics[width=0.235\linewidth]{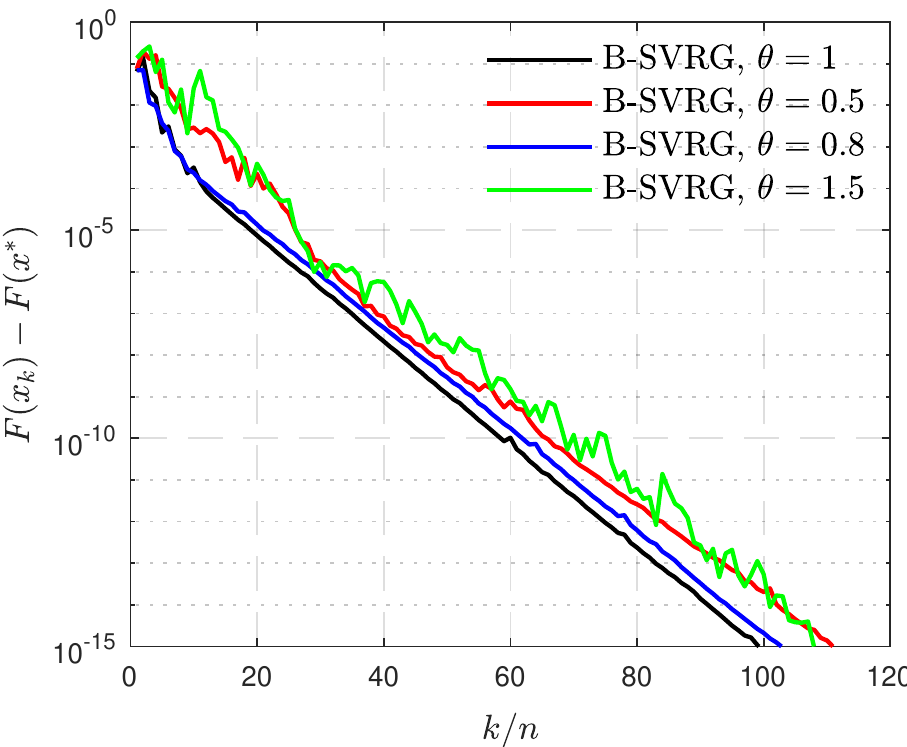} }  
\subfloat[\texttt{mushrooms}]{ \includegraphics[width=0.235\linewidth]{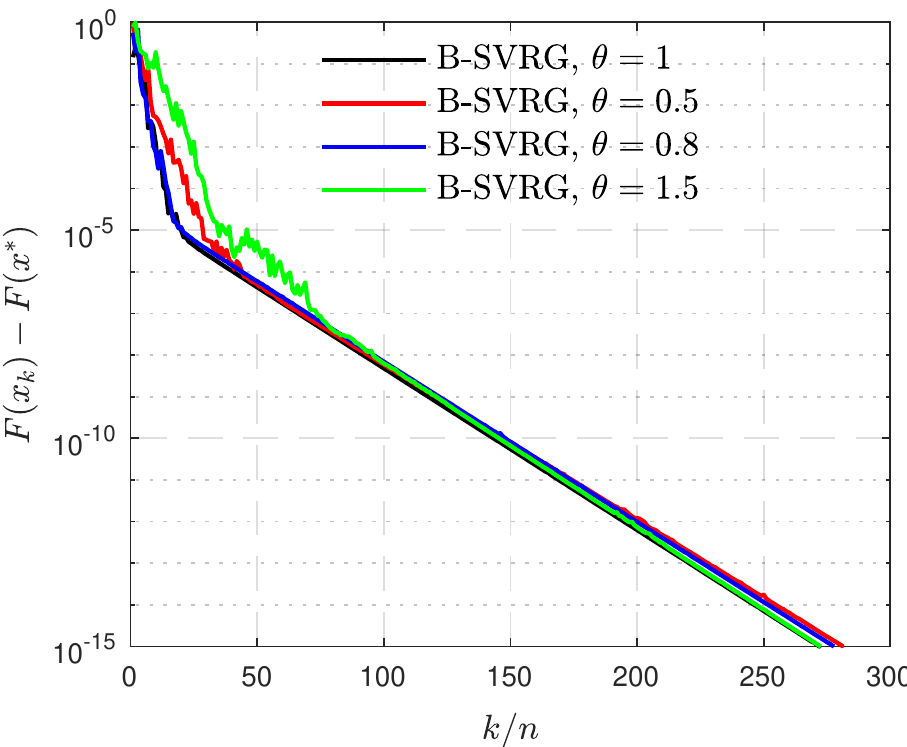} } 
\subfloat[\texttt{phishing}]{ \includegraphics[width=0.235\linewidth]{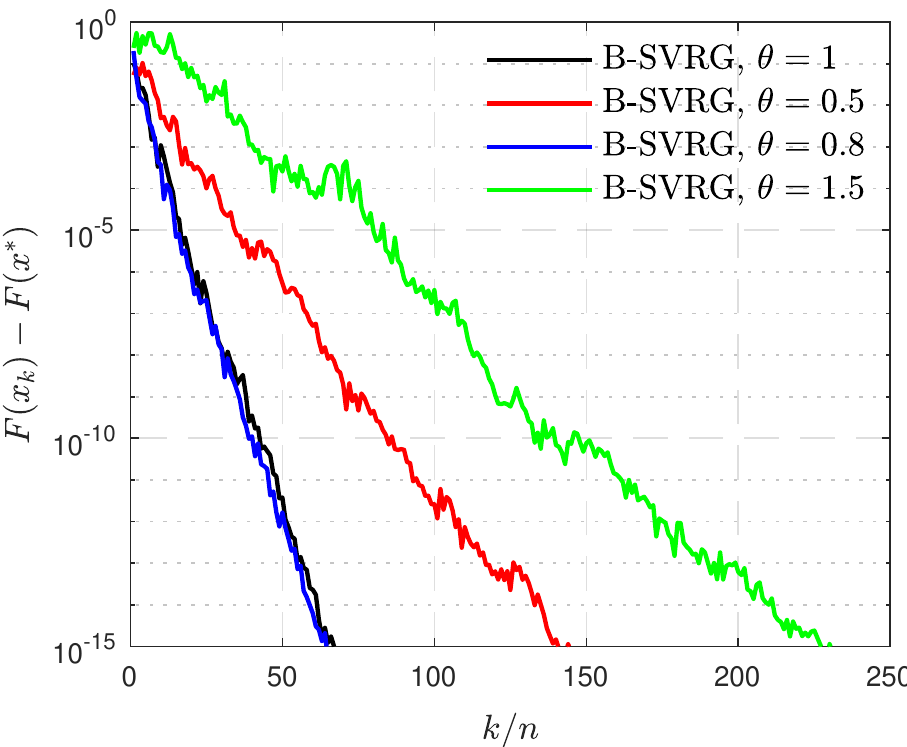} }   
\subfloat[\texttt{ijcnn1}]{ \includegraphics[width=0.235\linewidth]{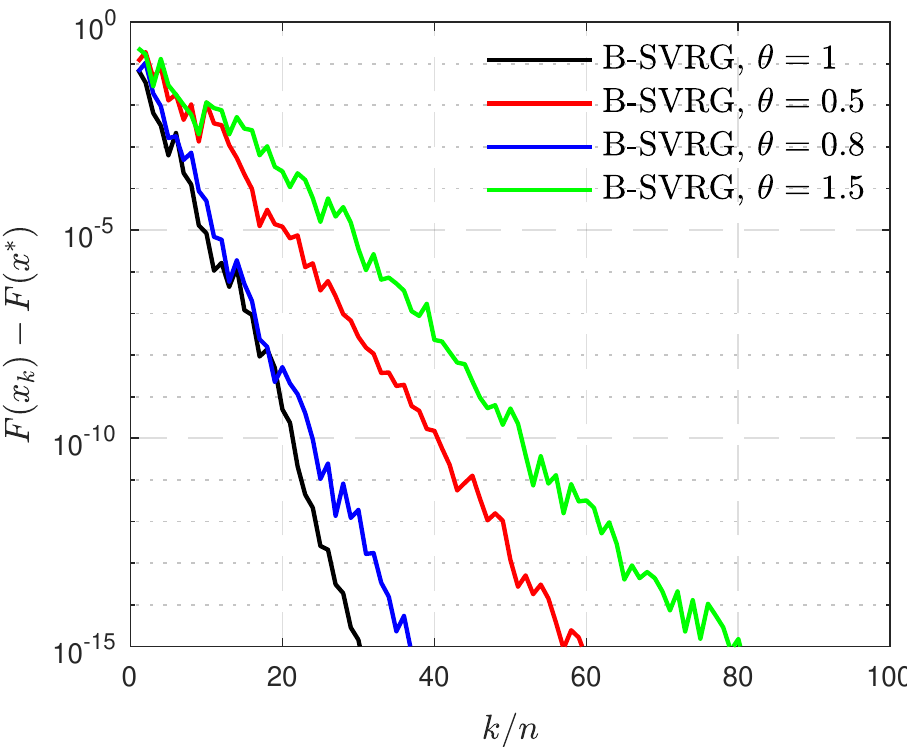} }  \\
\caption{
Performance comparison fitting a ridge regression model for different choices of $\theta$ in B-SVRG. The step size for each case is set to $\eta = \frac{1}{5 L}$. 
}
\label{fig:svrg-ridge}
\end{figure}

\begin{figure}[!ht]
\centering
\subfloat[\texttt{australian}]{ \includegraphics[width=0.235\linewidth]{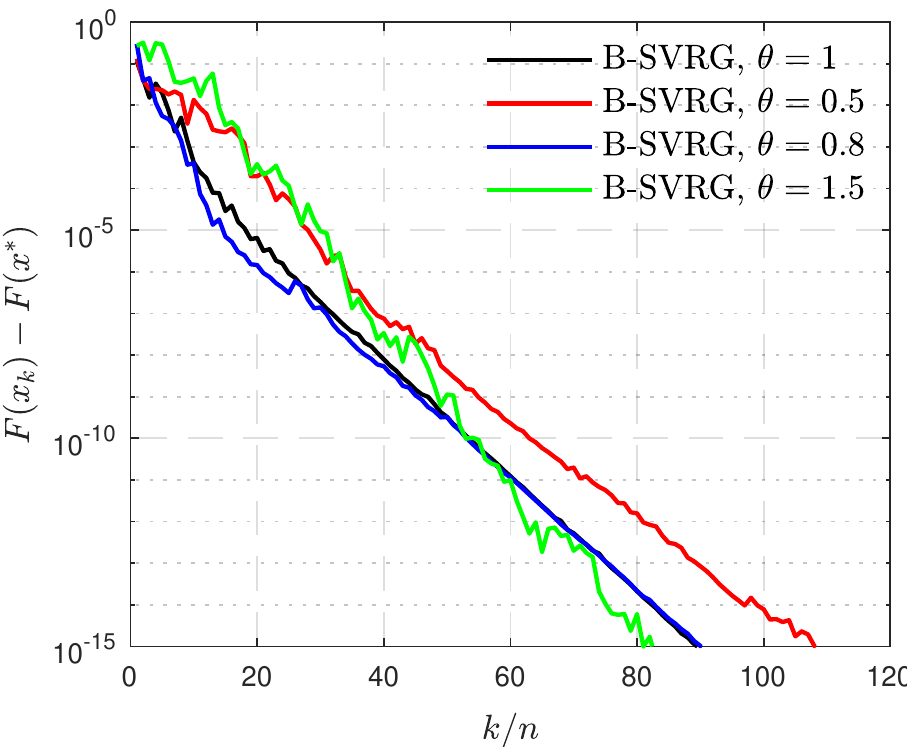} }  
\subfloat[\texttt{mushrooms}]{ \includegraphics[width=0.235\linewidth]{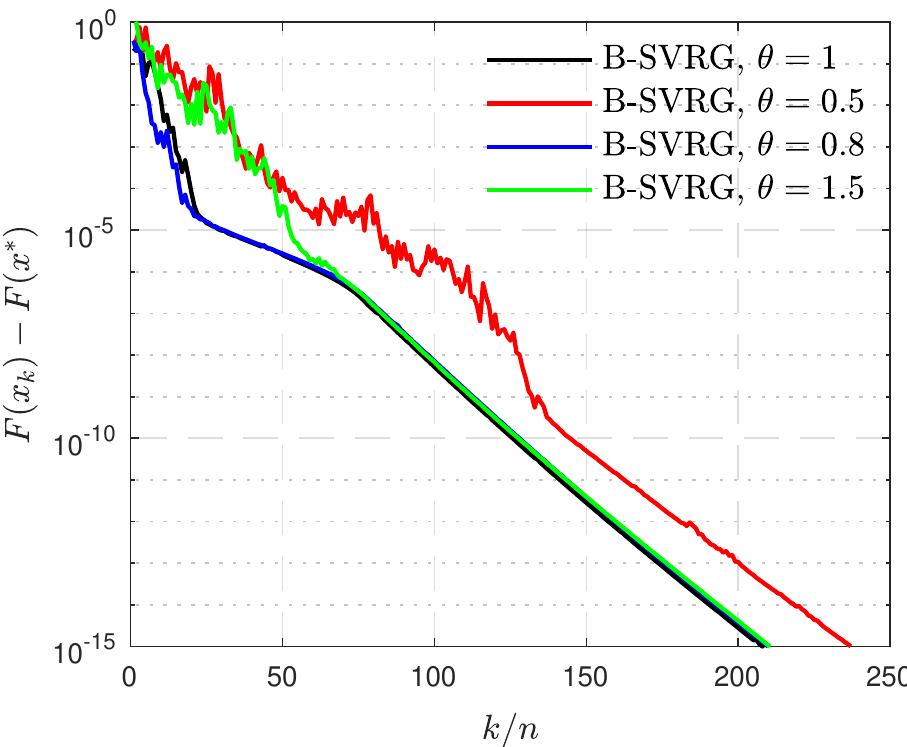} } 
\subfloat[\texttt{phishing}]{ \includegraphics[width=0.235\linewidth]{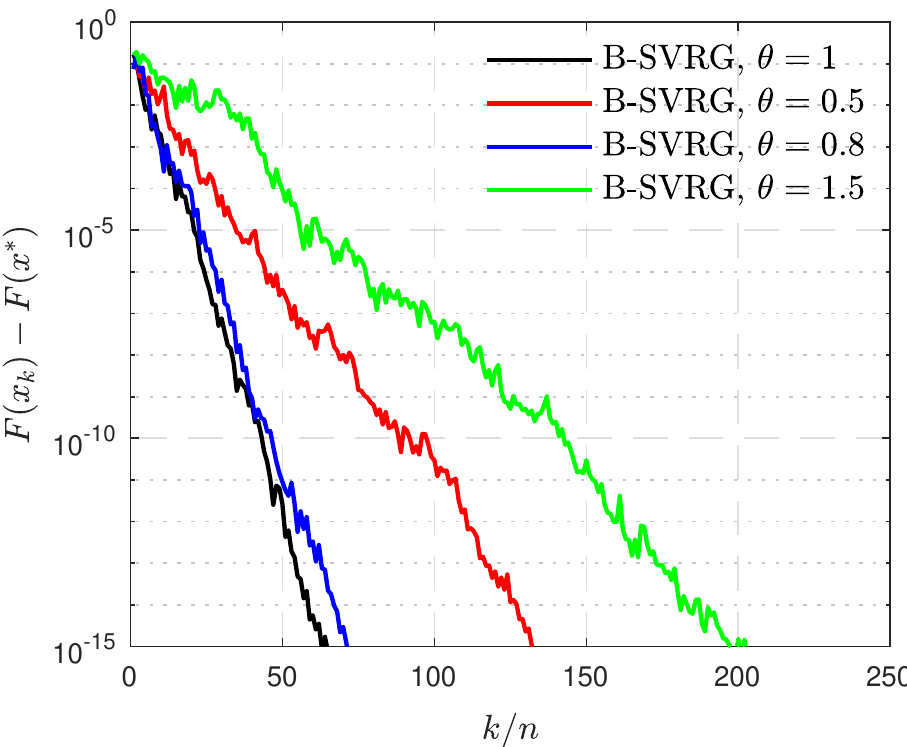} }   
\subfloat[\texttt{ijcnn1}]{ \includegraphics[width=0.235\linewidth]{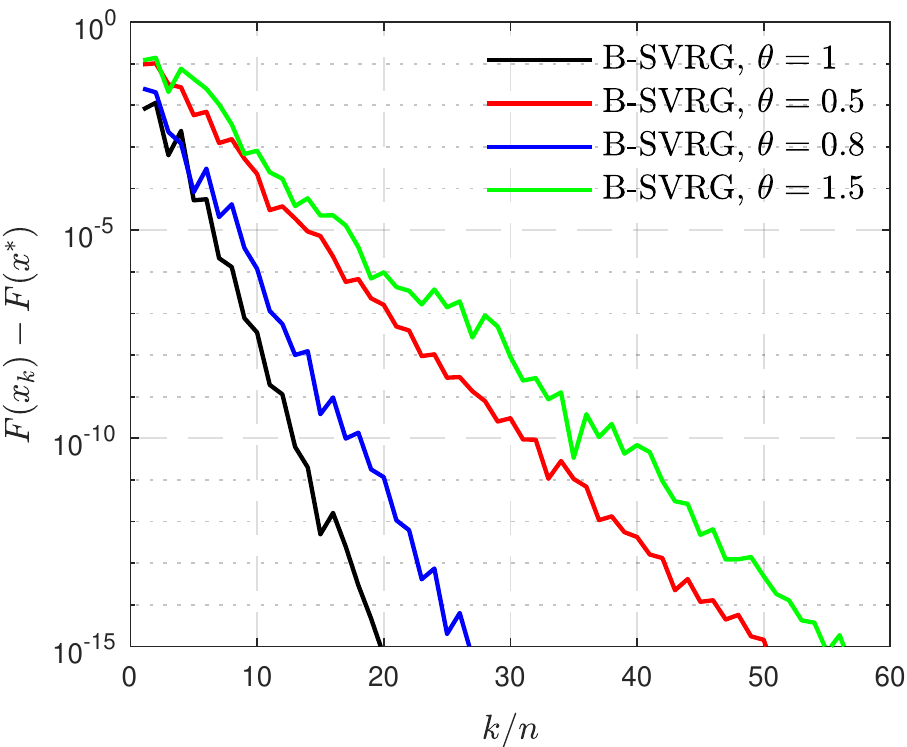} }  \\
\caption{
Performance comparison fitting a LASSO model for different choices of $\theta$ in B-SVRG. The step size for each case is set to $\eta = \frac{1}{5 L}$. 
}
\label{fig:svrg-lasso}
\end{figure}

\paragraph{Comparison of different gradient estimators}
Finally, we provide comparison of SAGA, B-SAGA with $\theta = 10$, SVRG, SARAH and SARGE, the results are provided below in Figure \ref{fig:cmp-ridge} and \ref{fig:cmp-lasso} from which we observe that
\begin{itemize}[itemsep=-1pt,topsep=2pt]
\item SARAH performs similarly to SVRG, but is occasionally slower in early epochs.
\item SARGE consistently outperforms all other methods except for B-SAGA with $\theta = 10$.
\end{itemize}
The above observations indicate that, depending on the data, biased schemes can benefit from their biased gradient estimates. The free parameter $\theta$ reduces the MSE of the B-SAGA and B-SVRG gradient estimators leading to better performance, and the bias in SARAH and SARGE has a similar effect.

\begin{figure}[!ht]
\centering
\subfloat[\texttt{australian}]{ \includegraphics[width=0.235\linewidth]{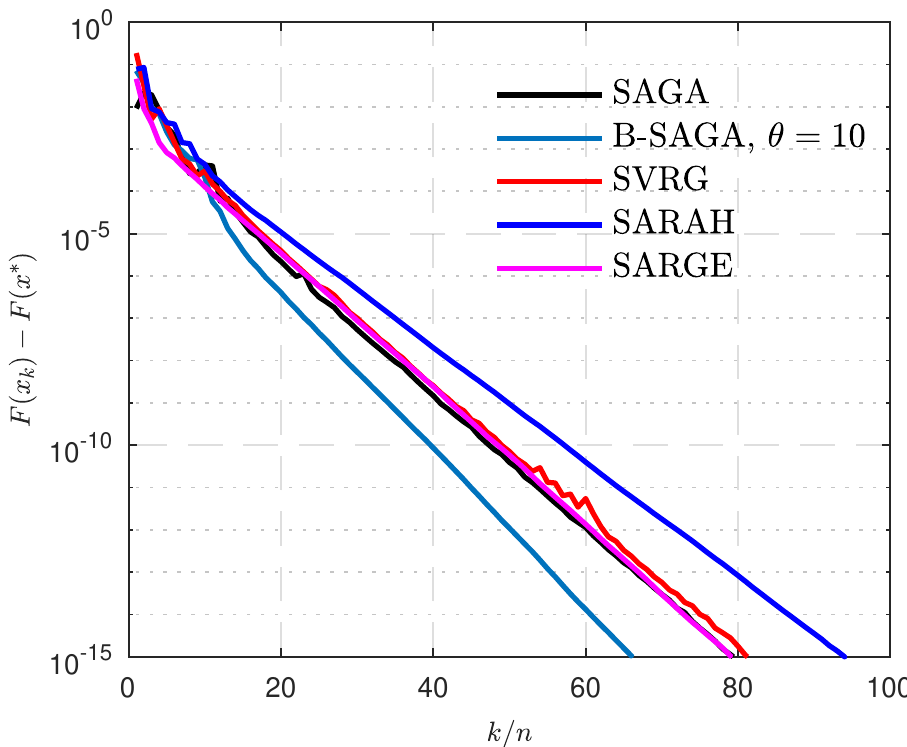} }  
\subfloat[\texttt{mushrooms}]{ \includegraphics[width=0.235\linewidth]{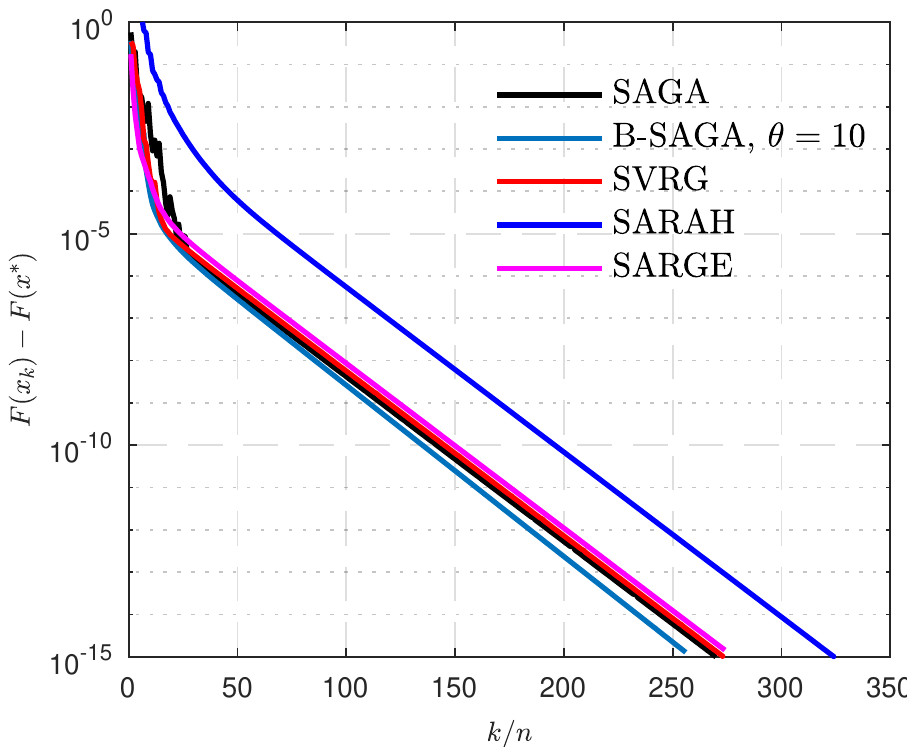} } 
\subfloat[\texttt{phishing}]{ \includegraphics[width=0.235\linewidth]{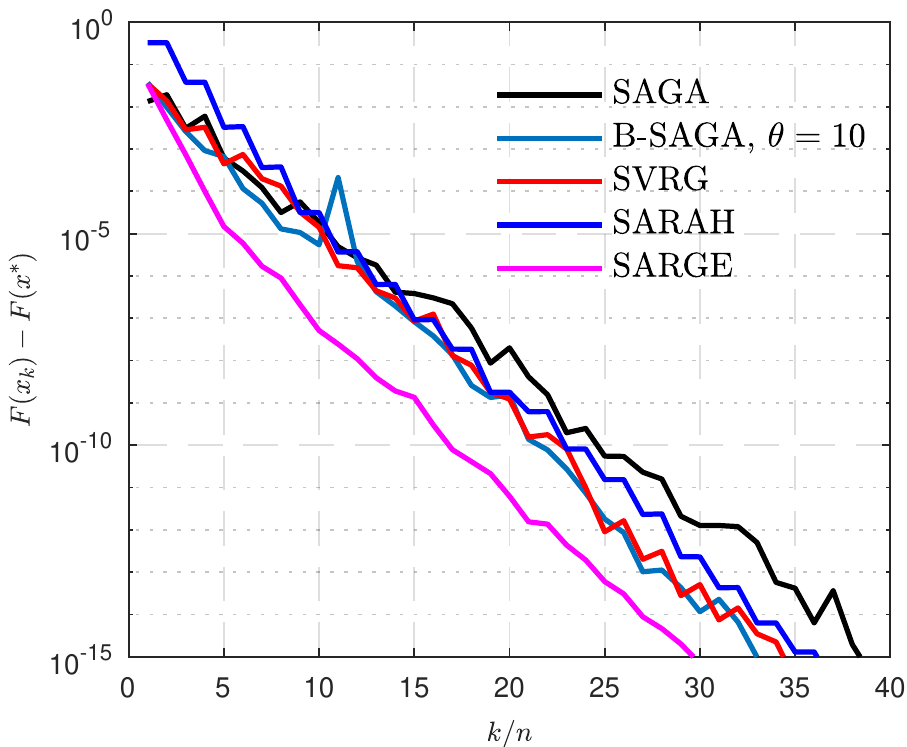} }   
\subfloat[\texttt{ijcnn1}]{ \includegraphics[width=0.235\linewidth]{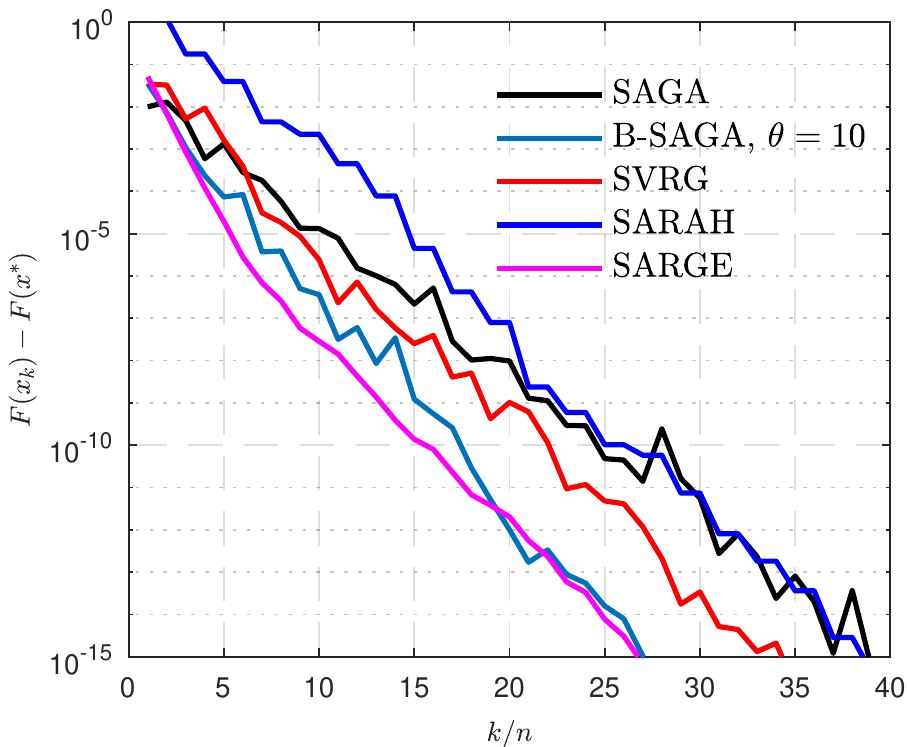} }  \\
\caption{Performance comparison for solving ridge regression among different algorithms. Step sizes are tuned automatically to minimize the number of iterations required to reach a tolerance of $10^{-15}$.}
\label{fig:cmp-ridge}
\end{figure}

\begin{figure}[!ht]
\centering
\subfloat[\texttt{australian}]{ \includegraphics[width=0.235\linewidth]{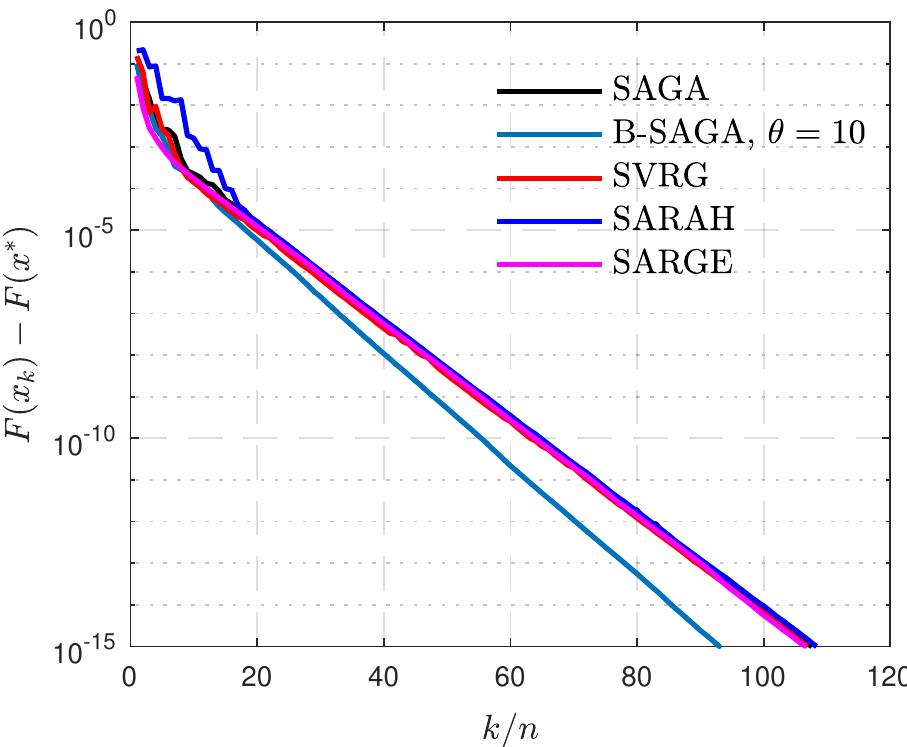} }  
\subfloat[\texttt{mushrooms}]{ \includegraphics[width=0.235\linewidth]{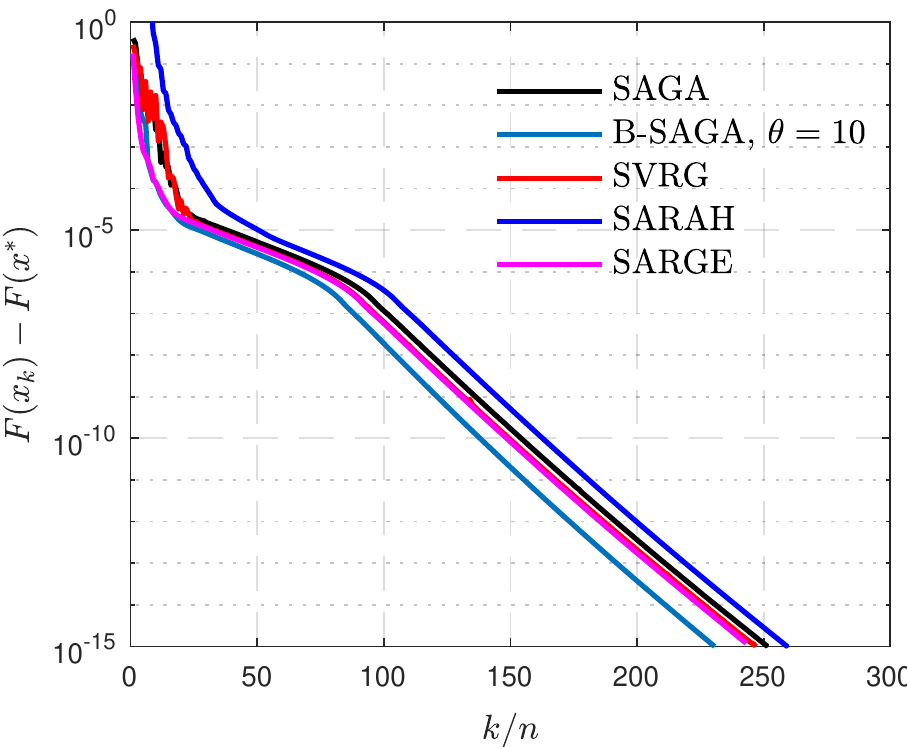} } 
\subfloat[\texttt{phishing}]{ \includegraphics[width=0.235\linewidth]{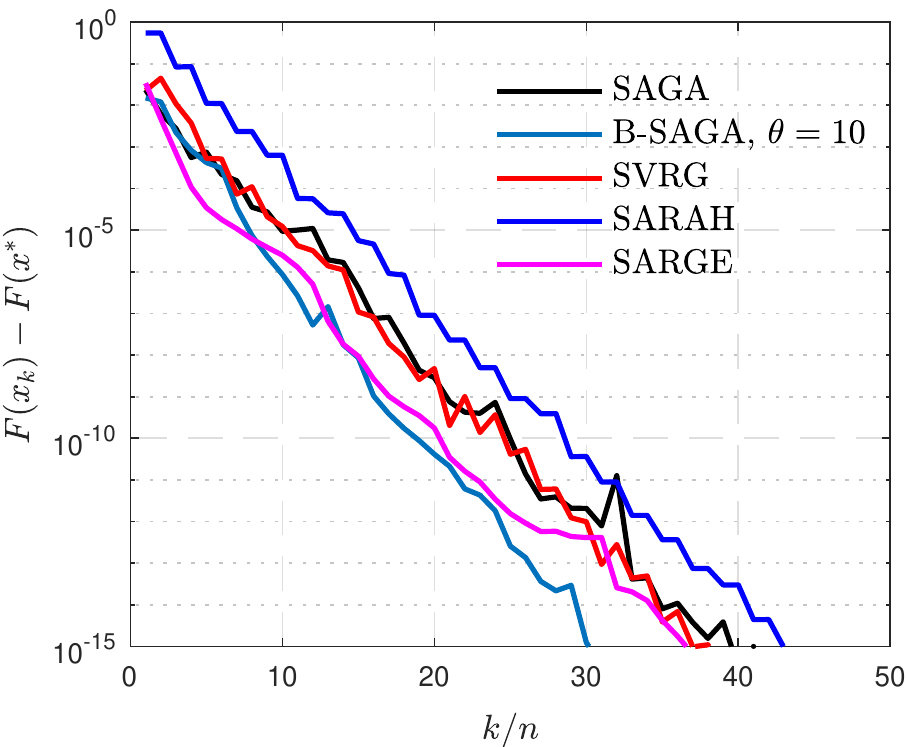} }   
\subfloat[\texttt{ijcnn1}]{ \includegraphics[width=0.235\linewidth]{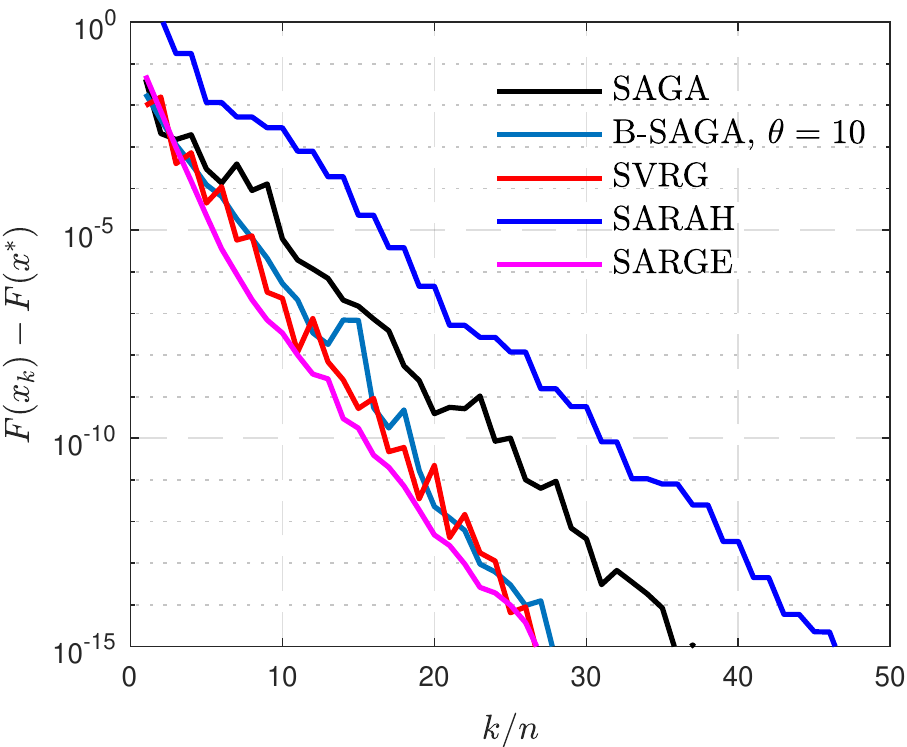} }  \\
\caption{Performance comparison for solving LASSO regression among different algorithms. Step sizes are tuned automatically to minimize the number of iterations required to reach a suboptimality of $10^{-15}$.}
\label{fig:cmp-lasso}
\end{figure}

\subsection{Non-convex objectives}

To test the effect of bias in the non-convex setting, we consider the non-negative principal component analysis (NN-PCA) problems, which can be formulated as \cite{reddiprox}:
\beq
\min_{x \in \mathbb{R}^p } ~~ - \tfrac{1}{n} \msum_{i=1}^n ( h_i^\top x )^2 + \iota_C(x) ,
\eeq
where $C \defeq \{ x \in \R^p : \|x\| \le 1, \ x \ge 0 \}$ is a convex set and 
\begin{equation}
\iota_C(x) = \left\{ \begin{aligned} 0 &: x \in C \\ +\infty &: x \notin C  \end{aligned} \right.
\end{equation}
is the indicator function of $C$. Letting $g = \iota_C$, the operator $\prox_{\eta g}$ is the projection onto $C$, which can be computed efficiently.

As the problem is non-convex, we cannot measure convergence with respect to the global optimum $\xsol$, so we use many iterations of proximal gradient descent with a small step size ($\eta = \tfrac{1}{10 L n}$) to find a reference point $\xsol$. Every test is initialized using a random vector with normally distributed i.i.d. entries, and the same starting point is used for testing each value of $\theta$. We found that small step sizes generally lead to stationary points with smaller objective values, so we set $\eta = \frac{1}{5 n}$ for all our experiments. We report $F(x_k) - F(\xsol)$ averaged over every $n$ iterations. These experiments show that the performance of B-SAGA and B-SVRG varies significantly with $\theta$, with smaller values leading to better performance. SARAH and SARGE perform similarly to SAGA and SVRG in these experiments, see Figure \ref{fig:nnpca-saga} and \ref{fig:nnpca-svrg}.

\begin{figure}[!ht]
\centering
\subfloat[\texttt{australian}]{ \includegraphics[width=0.235\linewidth]{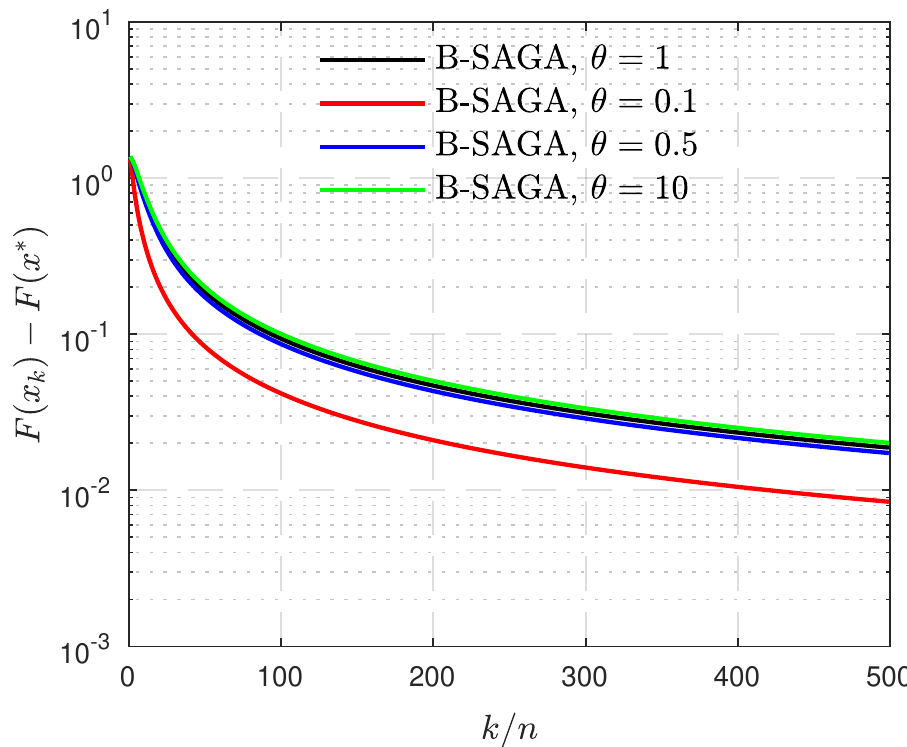} }  
\subfloat[\texttt{mushrooms}]{ \includegraphics[width=0.235\linewidth]{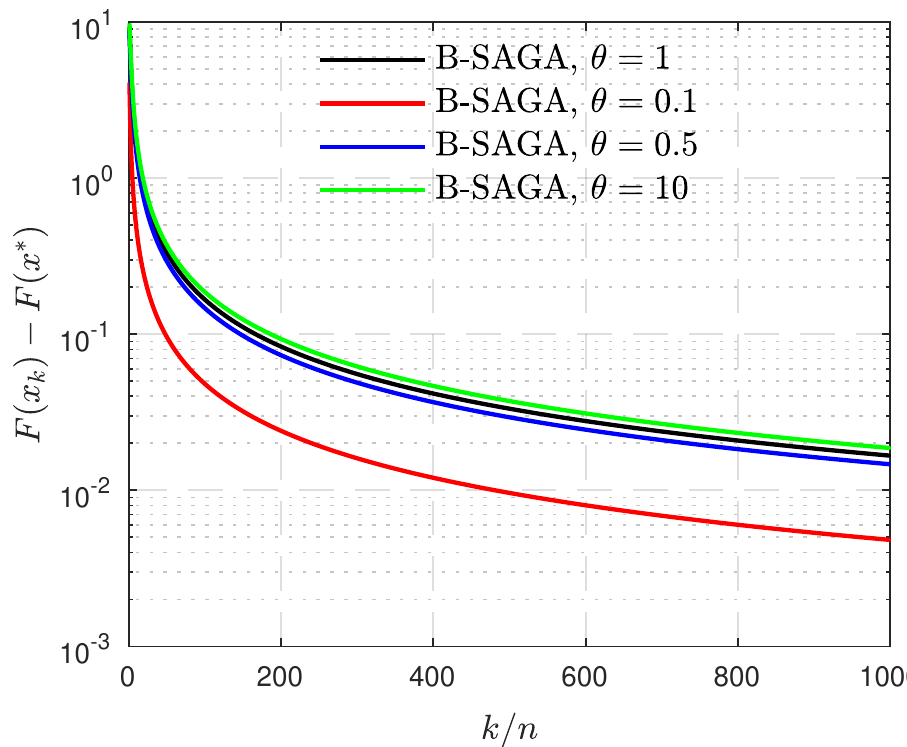} } 
\subfloat[\texttt{phishing}]{ \includegraphics[width=0.235\linewidth]{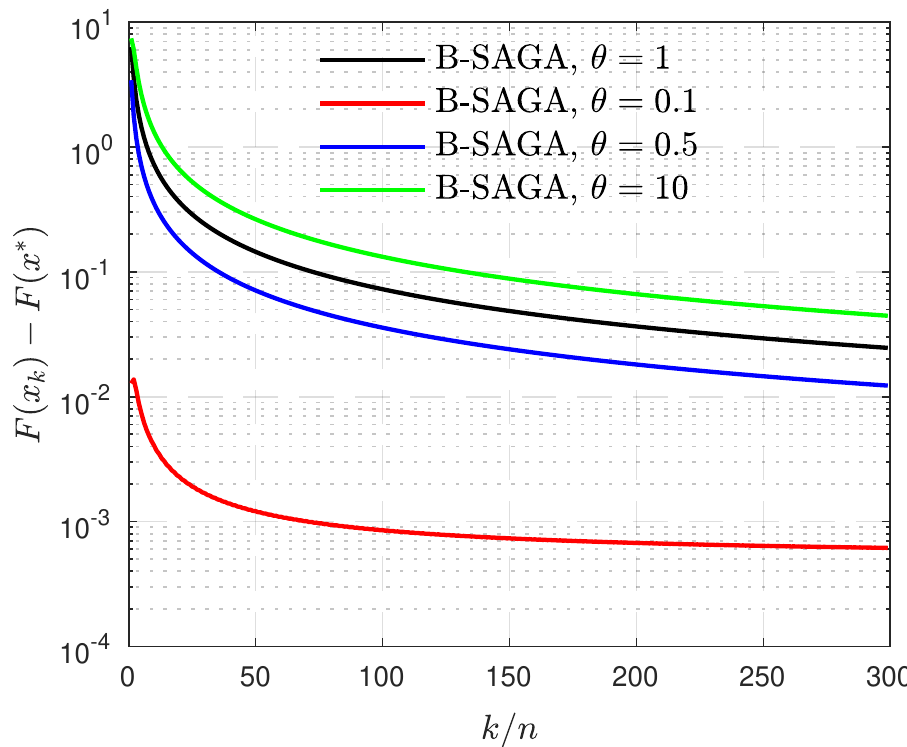} }   
\subfloat[\texttt{ijcnn1}]{ \includegraphics[width=0.235\linewidth]{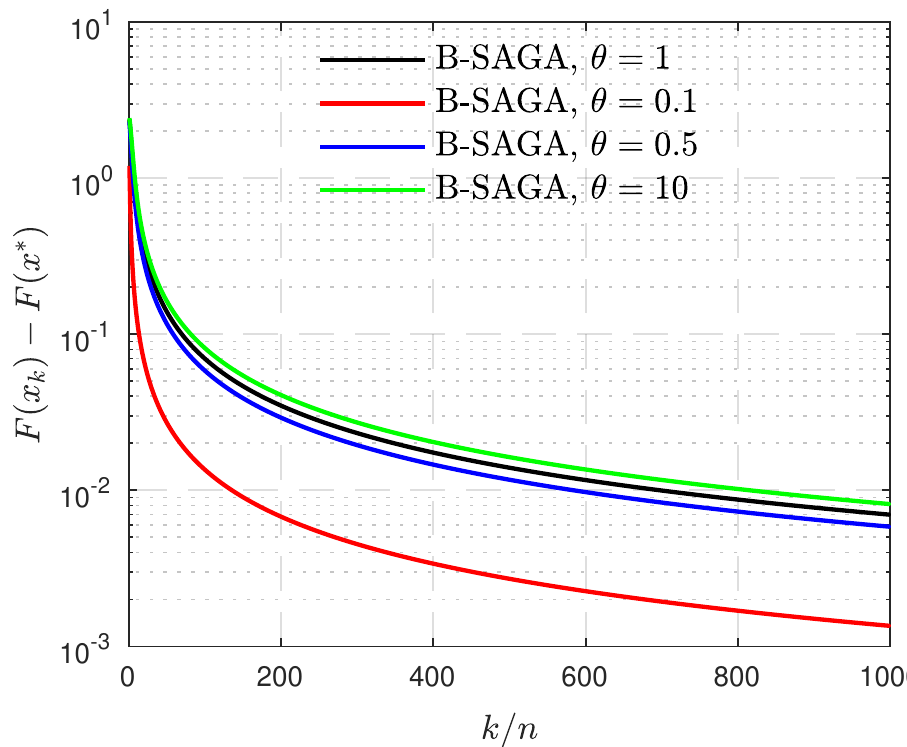} }  \\
\caption{
Performance comparison for solving NN-PCA with different choices of $\theta$ in B-SAGA. The step size for each case is set to $\eta = \frac{1}{5 L n}$. The point $x^\star$ is found by solving the problem using proximal gradient descent to high accuracy.
}
\label{fig:nnpca-saga}
\end{figure}

\begin{figure}[!ht]
\centering
\subfloat[\texttt{australian}]{ \includegraphics[width=0.235\linewidth]{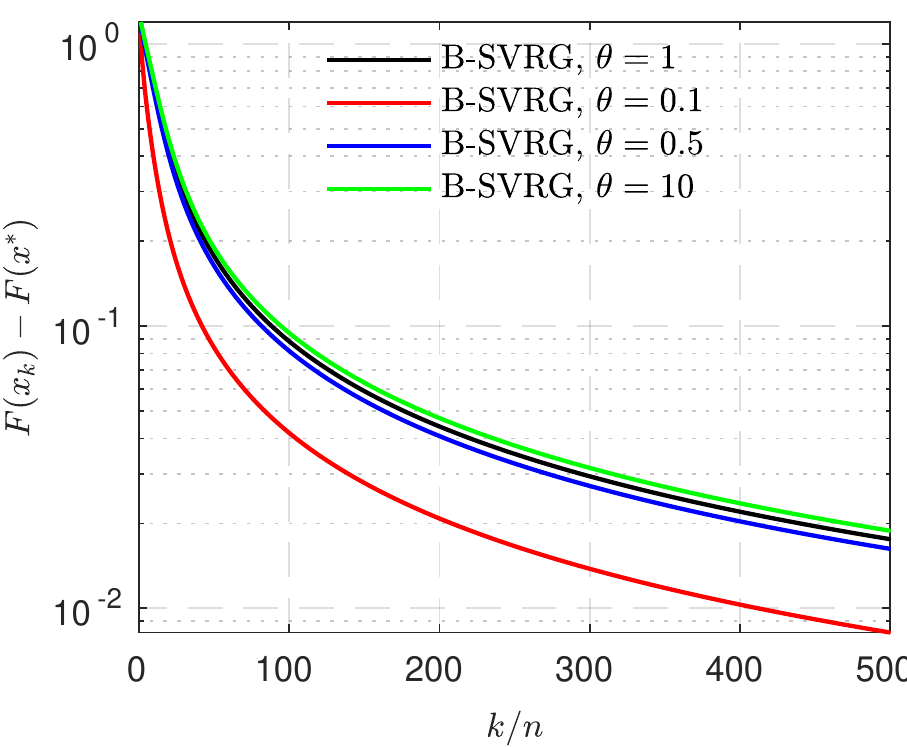} }  
\subfloat[\texttt{mushrooms}]{ \includegraphics[width=0.235\linewidth]{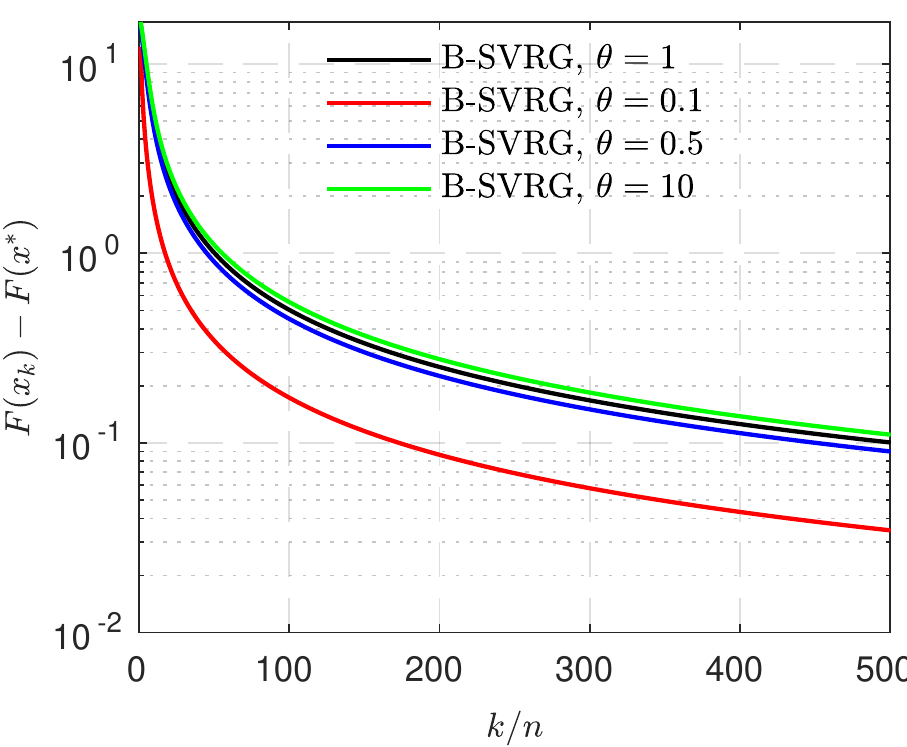} } 
\subfloat[\texttt{phishing}]{ \includegraphics[width=0.235\linewidth]{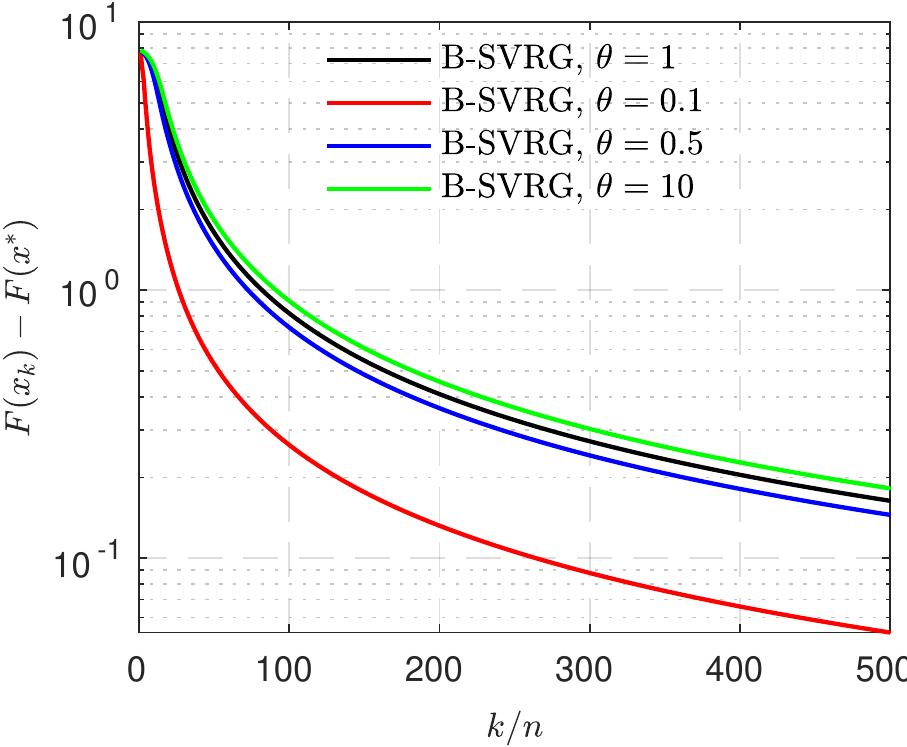} }   
\subfloat[\texttt{ijcnn1}]{ \includegraphics[width=0.235\linewidth]{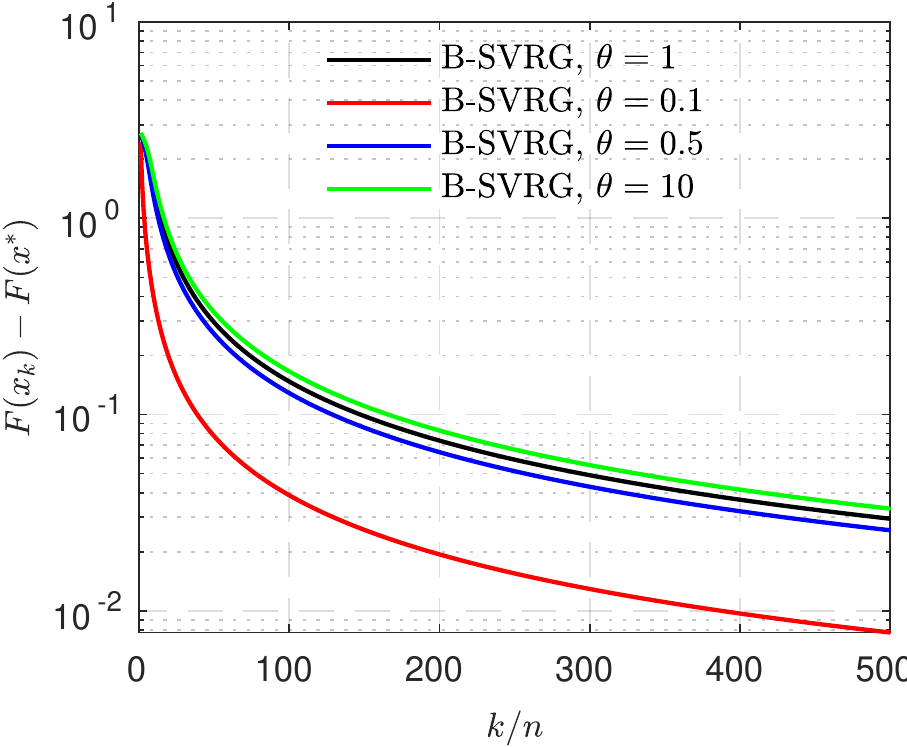} }  \\
\caption{
Performance comparison for solving NN-PCA with different choices of $\theta$ in B-SVRG. The step size for each case is set to $\eta = \frac{1}{5 L n}$. The point $x^\star$ is found by solving the problem using proximal gradient descent.
}
\label{fig:nnpca-svrg}
\end{figure}

For the comparison of all algorithms, B-SAGA and B-SVRG provides the best performance with B-SVRG being slightly faster.

\begin{figure}[!ht]
\centering
\subfloat[\texttt{australian}]{ \includegraphics[width=0.235\linewidth]{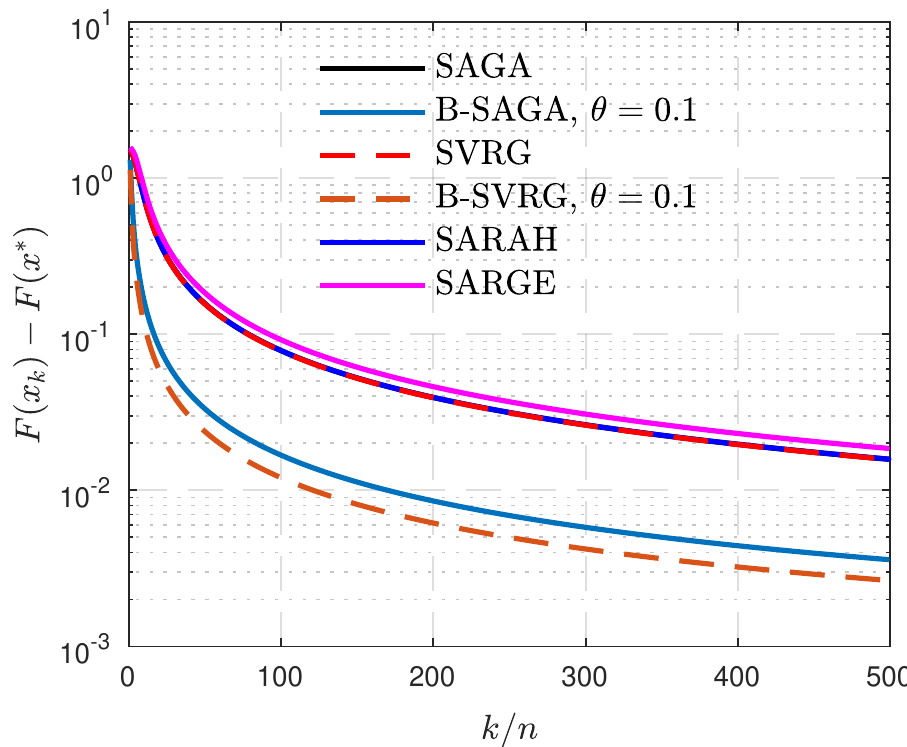} }  
\subfloat[\texttt{mushrooms}]{ \includegraphics[width=0.235\linewidth]{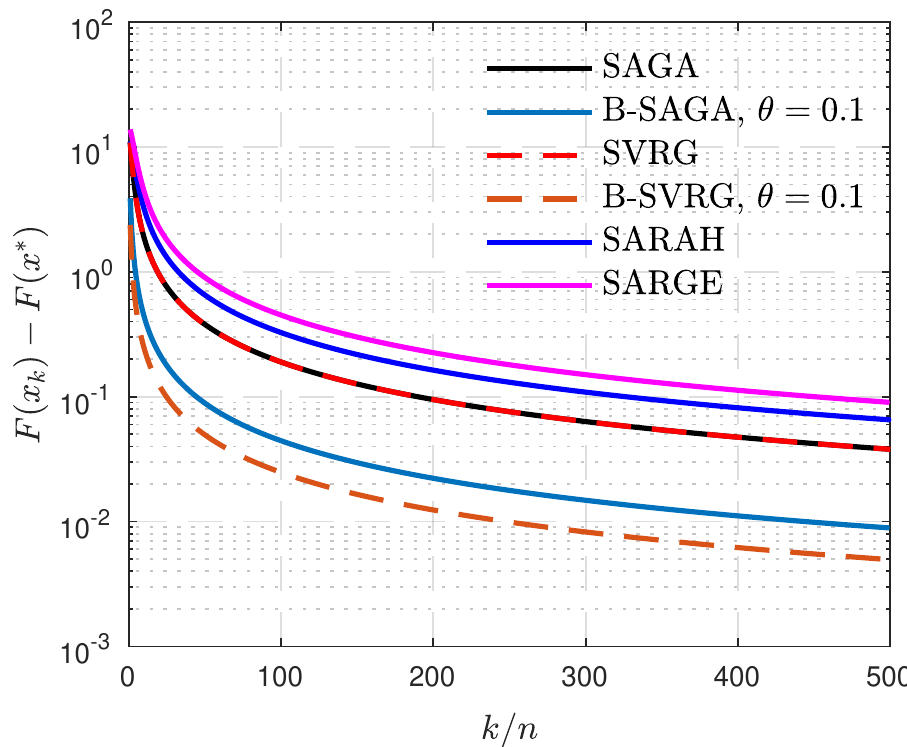} } 
\subfloat[\texttt{phishing}]{ \includegraphics[width=0.235\linewidth]{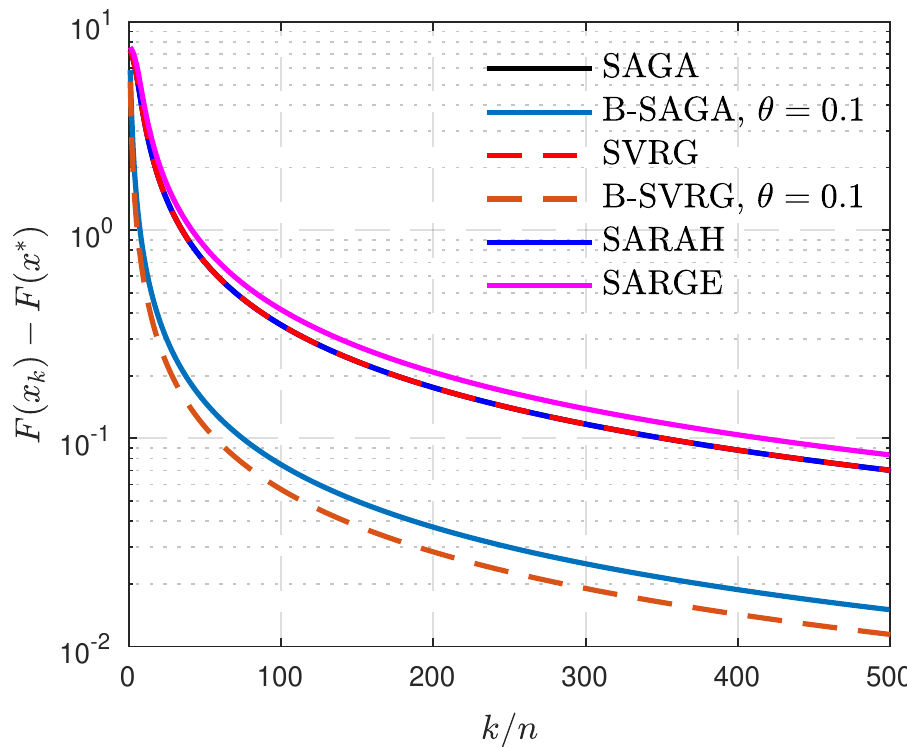} }   
\subfloat[\texttt{ijcnn1}]{ \includegraphics[width=0.235\linewidth]{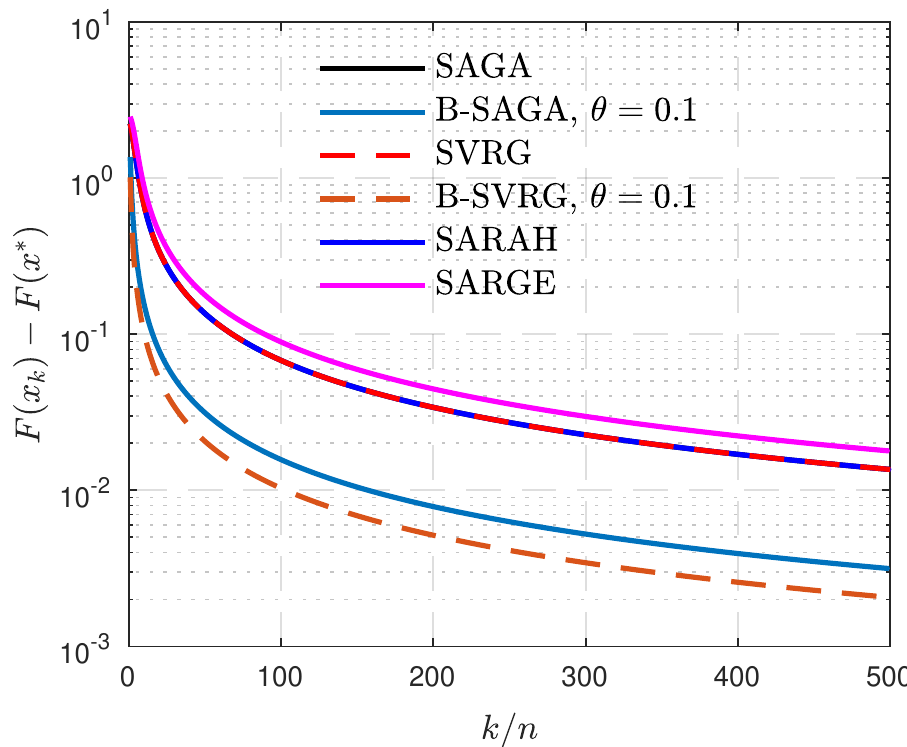} }  \\
\caption{Performance comparison for solving NN-PCA among different algorithms. All step sizes are set to $\frac{1}{5 L n}$. Objective values are averaged over each epoch ($n$ steps).}
\label{fig:nnpca-all}
\end{figure}

\section{Conclusion}

The complicated convergence proofs of biased stochastic gradient methods have restricted researchers to studying unbiased estimators almost exclusively. Our simple framework for proving convergence rates for biased algorithms overcomes this limitation. Our analysis allows for the study of biased algorithms with proximal support for minimizing convex, strongly convex, and non-convex objectives for the first time.

We also show that biased gradient estimators can offer improvements over unbiased estimators in theory and in practice. The B-SAGA and B-SVRG gradient estimators incorporate bias to reduce their mean squared errors and improve their performance in many settings. The bias in recursive gradient estimators, such as SARAH and SARGE, lead to much smaller bounds on their MSE's and faster convergence rates than B-SAGA and B-SVRG.

\section*{Acknowledgements}
CBS acknowledges support from the Leverhulme Trust project on Breaking the Non-Convexity Barrier and on Unveiling the Invisible, the Philip Leverhulme Prize, the EPSRC grant No. EP/M00483X/1, the EPSRC Centre No. EP/N014588/1, the European Union Horizon 2020 research and innovation programmes under the Marie Skłodowska-Curie grant agreement No. 691070 CHiPS and the Marie Skłodowska-Curie grant agreement No 777826, the Cantab Capital Institute for the Mathematics of Information and the Alan Turing Institute.

\vskip 0.2in

\bibliographystyle{acm}
\bibliography{main.bib}

\begin{thebibliography}{10}

\bibitem{natasha}
{\sc Allen-Zhu, Z.}
\newblock Natasha: Faster non-convex stochastic optimization via strongly
  non-convex parameter.
\newblock In {\em {ICML}\/} (2017).

\bibitem{katyusha}
{\sc Allen-Zhu, Z.}
\newblock Katyusha: The first direct acceleration of stochastic gradient
  methods.
\newblock {\em Journal of Machine Learning Research\/} (2018), 1--51.

\bibitem{katyushax}
{\sc Allen-Zhu, Z.}
\newblock Katyusha {X}: Practical momentum method for stochastic
  sum-of-nonconvex optimization.
\newblock In {\em {ICML}\/} (2018).

\bibitem{natasha2}
{\sc Allen-Zhu, Z.}
\newblock Natasha 2: Faster non-convex optimization than {SGD}.
\newblock In {\em {$32^{nd}$} Conference on Neural Information Processing
  Systems\/} (2018).

\bibitem{zhunoncon}
{\sc Allen-Zhu, Z., and Hazan, E.}
\newblock Variance reduction for faster non-convex optimization.
\newblock In {\em Proceedings of the {$33^{rd}$} International Conference on
  Machine Learning\/} (2016), vol.~48.

\bibitem{svrgplusplus}
{\sc Allen-Zhu, Z., and Yuan, Y.}
\newblock Improved {SVRG} for non-strongly-convex or sum-of-non-convex
  objectives.
\newblock In {\em {ICML}\/} (2018).

\bibitem{fista}
{\sc Beck, A., and Teboulle, M.}
\newblock A fast iterative shrinkage-thresholding algorithm for linear inverse
  problems.
\newblock {\em {SIAM} Journal on Imaging Sciences 2}, 1 (2009), 183--202.

\bibitem{bishop2006pattern}
{\sc Bishop, C.~M.}
\newblock {\em Pattern recognition and machine learning}.
\newblock {S}pringer, 2006.

\bibitem{nocedalreview}
{\sc Bottou, L., Curtis, F.~E., , and Nocedal, J.}
\newblock Optimization methods for large-scale machine learning.
\newblock {\em SIAM Review 60\/} (2018), 223--311.

\bibitem{bredies2018mathematical}
{\sc Bredies, K., and Lorenz, D.}
\newblock {\em Mathematical Image Processing}.
\newblock Springer, 2018.

\bibitem{spdhg}
{\sc Chambolle, A., Ehrhardt, M.~J., Richt{\'a}rik, P., and Sch{\"o}nlieb,
  C.-B.}
\newblock Stochastic primal-dual hybrid gradient algorithm with arbitrary
  sampling and imaging applications.
\newblock {\em {SIAM} J. Optim. 28}, 4 (2018), 2783--2808.

\bibitem{pointsaga}
{\sc Defazio, A.}
\newblock A simple practical accelerated method for finite sums.
\newblock In {\em {$30^{th}$} Conference on Neural Information Processing
  Systems\/} (2016).

\bibitem{SAGA}
{\sc Defazio, A., Bach, F., and Lacoste-Julien, S.}
\newblock {SAGA}: A fast incremental gradient method with support for
  non-strongly convex composite objectives.
\newblock In {\em Advances in Neural Information Processing Systems\/} (2014),
  pp.~1646--1654.

\bibitem{finito}
{\sc Defazio, A., Caetano, T., and Domke, J.}
\newblock Finito: A faster, permutable incremental gradient method for big data
  problems.
\newblock In {\em Proceedings of the 31st International Conference on Machine
  Learning\/} (2014).

\bibitem{spider}
{\sc Fang, C., Li, C.~J., Lin, Z., and Zhang, T.}
\newblock Spider: Near-optimal non-convex optimization via stochastic path
  integrated differential estimator.
\newblock In {\em {$32^{nd}$} Conference on Neural Information Processing
  Systems\/} (2018).

\bibitem{pcanoncon}
{\sc Garber, D., and Hazan, E.}
\newblock Faster and simple {PCA} via convex optimization.
\newblock {\em ar{X}iv:1509.05647v4\/} (2015).

\bibitem{neighbors}
{\sc Hofmann, T., Lucchi, A., Lacoste-Julien, S., and McWilliams, B.}
\newblock Variance reduced stochastic gradient descent with neighbors.
\newblock In {\em Advances in Neural Information Processing Systems\/} (2015),
  vol.~28, pp.~2296--2304.

\bibitem{SVRG}
{\sc Johnson, R., and Zhang, T.}
\newblock Accelerating stochastic gradient descent using predictive variance
  reduction.
\newblock In {\em Advances in Neural Information Processing Systems\/} (2013),
  pp.~315--323.

\bibitem{varag}
{\sc Lan, G., Li, Z., and Zhou, Y.}
\newblock A unified variance-reduced accelerated gradient method for convex
  optimization.
\newblock {\em ar{X}iv:1905.12412\/} (2019).

\bibitem{liang2017activity}
{\sc Liang, J., Fadili, J., and Peyr{\'e}, G.}
\newblock Activity identification and local linear convergence of
  {F}orward--{B}ackward-type methods.
\newblock {\em SIAM Journal on Optimization 27}, 1 (2017), 408--437.

\bibitem{lions1979splitting}
{\sc Lions, P.~L., and Mercier, B.}
\newblock Splitting algorithms for the sum of two nonlinear operators.
\newblock {\em SIAM Journal on Numerical Analysis 16}, 6 (1979), 964--979.

\bibitem{miso}
{\sc Mairal, J.}
\newblock Incremental majorization-minimization optimization with application
  to large-scale machine learning.
\newblock {\em Technical report\/} (2014).

\bibitem{nest2004}
{\sc Nesterov, Y.}
\newblock {\em Introductory lectures on convex programming}.
\newblock {S}pringer, 2004.

\bibitem{sarah}
{\sc Nguyen, L.~M., Liu, J., Scheinberg, K., and Tak{\'a\^{c}}, M.}
\newblock {SARAH}: A novel method for machine learning problems using
  stochastic recursive gradient.
\newblock In {\em Proceedings of the 34th International Conference on Machine
  Learning\/} (2017), vol.~70, pp.~2613--2621.

\bibitem{proxsarah}
{\sc Pham, N.~H., Nguyen, L.~M., Phan, D.~T., and Tran-Dinh, Q.}
\newblock Prox{SARAH}: An efficient algorithmic framework for stochastic
  composite nonconvex optimization.
\newblock {\em ar{X}iv:1902.05679\/} (2019).

\bibitem{reddi}
{\sc Reddi, S.~J., Hefny, A., Sra, S., P{\'o}czos, B., and Smola, A.}
\newblock Stochastic variance reduction for nonconvex optimization.
\newblock In {\em Proc. 33rd International Conference on Machine Learning\/}
  (2016).

\bibitem{reddiprox}
{\sc Reddi, S.~J., Sra, S., P{\'o}czos, B., and Smola, A.}
\newblock Fast stochastic methods for nonsmooth nonconvex optimization.
\newblock In {\em Proc. 30th Annual Conference on Neural Information Processing
  Systems\/} (2016).

\bibitem{sgd}
{\sc Robbins, H., and Monro, S.}
\newblock A stochastic approximation method.
\newblock {\em Annals of Mathematical Statistics 22}, 3 (1951), 400--407.

\bibitem{roux2012stochastic}
{\sc Roux, N.~L., Schmidt, M., and Bach, F.~R.}
\newblock A stochastic gradient method with an exponential convergence rate for
  finite training sets.
\newblock In {\em Advances in neural information processing systems\/} (2012),
  pp.~2663--2671.

\bibitem{SAG}
{\sc Schmidt, M., Roux, N.~L., and Bach, F.}
\newblock Minimizing finite sums with the stochastic average gradient.
\newblock {\em Mathematical Programming 162\/} (2017), 83--112.

\bibitem{sdca}
{\sc Shalev-Shwartz, S., and Zhang, T.}
\newblock Stochastic dual coordinate ascent methods for regularized loss
  minimization.
\newblock {\em Journal of Machine Learning Research 14\/} (2013), 567--599.

\bibitem{ASVRG}
{\sc Shang, F., Jiao, L., Zhou, K., Cheng, J., Ren, Y., and Jin, Y.}
\newblock {ASVRG}: Accelerated proximal {SVRG}.
\newblock In {\em Asian Conference on Machine Learning\/} (2018), vol.~95,
  pp.~1--32.

\bibitem{spiderboost}
{\sc Wang, Z., Ji, K., Zhou, Y., Liang, Y., and Tarokh, V.}
\newblock {SpiderBoost}: A class of faster variance-reduced algorithms for
  nonconvex optimization.
\newblock {\em ar{X}iv:1810.10690\/} (2018).

\bibitem{proxSVRG}
{\sc Xiao, L., and Zhang, T.}
\newblock A proximal stochastic gradient method with progressive variance
  reduction.
\newblock {\em Technical report, Microsoft Research\/} (2014).

\bibitem{MiG}
{\sc Zhou, K., Shang, F., and Cheng, J.}
\newblock A simple stochastic variance reduced algorithm with fast convergence
  rates.
\newblock In {\em {ICML}\/} (2018), pp.~5975--5984.

\bibitem{spiderm}
{\sc Zhou, Y., Wang, Z., Ji, K., Liang, Y., and Tarokh, V.}
\newblock Momentum schemes with stochastic variance reduction for nonconvex
  composite optimization.
\newblock {\em ar{X}iv:1902.02715\/} (2019).

\end{thebibliography}

\appendix
\section*{Appendix}

The organization of the appendix is as follows: we prove Theorems \ref{thm:rate_mem} and \ref{thm:rate_recurs} in Appendices \ref{sec:proof_mem} and \ref{sec:proof_recur}, respectively, and we prove Theorem \ref{thm:rate_ncvx} in Appendix \ref{sec:proof_ncvx}. We provide convergence rates for B-SAGA and B-SVRG as special cases of Theorem \ref{thm:rate_mem} in Appendix \ref{sec:saga}, and we provide convergence rates for SARAH and SARGE as special cases of Theorem \ref{thm:rate_recurs} in Appendices \ref{sec:sarah} and \ref{sec:sarge}, respectively.

\section{Proof of Theorem \ref{thm:rate_mem}}\label{sec:proof_mem}

To prove Theorem \ref{thm:rate_mem}, we begin by showing that the BMSE property (Definition \ref{def:bmse}) implies that the MSE of the gradient estimator over $T$ iterations is proportional to $\sum_{k=0}^{T-1} \mathbb{E} \|x_{k+1} - x_k\|^2$.

\begin{lemma}[{MSE bound}]
\label{lem:msebound}
Suppose that the stochastic gradient estimator $\tnabla$ satisfies the \emph{BMSE}$(M_1,M_2,\rho_M, \rho_F, m)$ property, let $\rho = \min\{ \rho_M, \rho_F \}$, and let $\sigma_s$ be any sequence satisfying $\sigma_s (1-\rho)^{m s} \le \sigma_{s-1} (1-\frac{\rho}{2})^{m s}$. For convenience, define $\Theta = \frac{M_1 \rho_F + 2 M_2}{\rho_M \rho_F}$. The MSE of the gradient estimator is bounded as
\beqs
\begin{aligned}
\sum_{s=0}^{S} \sigma_s \sum_{k = m s}^{m (s+1) - 1} \E \ca{ \|\nabla f(x_{k}) - \tnabla_{k}\|^2 } 
\le 2 \Theta L^2 \sum_{s=0}^{S} \sigma_s \sum_{k = m s}^{m (s+1) - 1} \E \ca{ \| x_{k+1} - x_k \|^2 } .
\end{aligned}
\eeqs
\end{lemma}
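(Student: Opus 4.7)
The plan is to unroll both inequalities in the BMSE property \eqref{eq:mseb}, swap the order of summation so that each epoch-wise squared gradient difference picks up a single scalar coefficient, and then bound that coefficient using the hypothesis on $\sigma_s$. Lipschitz continuity of each $\nabla f_i$ applied at the end converts the gradient differences into iterate differences and produces the $L^2$ factor.

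First I would introduce the abbreviation $G_j := \tfrac{1}{n}\sum_{k=mj}^{m(j+1)-1}\sum_{i=1}^n \E\|\nabla f_i(x_{k+1})-\nabla f_i(x_k)\|^2$ so that the first BMSE bound reads $\mathcal{M}_{ms}\le(1-\rho_M)^m\mathcal{M}_{m(s-1)}+\mathcal{F}_{ms}+M_1 G_s$, and the second reads $\mathcal{F}_{ms}\le M_2\sum_{\ell=0}^s(1-\rho_F)^{m(s-\ell)}G_\ell$ (under the natural reading that the inner $k$-sum in the BMSE bound for $\mathcal{F}$ is indexed by $\ell$). Iterating the first inequality in $s$ gives $\mathcal{M}_{ms}\le\sum_{j=0}^s(1-\rho_M)^{m(s-j)}(\mathcal{F}_{mj}+M_1 G_j)$, and substituting the bound on $\mathcal{F}_{mj}$ produces a triple sum whose terms are all of the form $M_1(1-\rho_M)^{m(s-j)}G_j$ or $M_2(1-\rho_M)^{m(s-j)}(1-\rho_F)^{m(j-\ell)}G_\ell$.

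Next I would multiply by $\sigma_s$, sum over $s\in\{0,\dots,S\}$, and interchange orders so that each $G_\ell$ is collected with a single scalar coefficient
\begin{equation*}
c_\ell \;=\; M_1\!\sum_{s\ge \ell}\sigma_s(1-\rho_M)^{m(s-\ell)} \;+\; M_2\!\sum_{s\ge \ell}\sigma_s\!\sum_{j=\ell}^{s}(1-\rho_M)^{m(s-j)}(1-\rho_F)^{m(j-\ell)}.
\end{equation*}
The decisive step is to bound the two geometric series using the assumption $\sigma_s(1-\rho)^{ms}\le\sigma_{s-1}(1-\rho/2)^{ms}$ together with $\rho\le\min\{\rho_M,\rho_F\}$. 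This hypothesis implies that $\sigma_s(1-\rho_M)^{m(s-\ell)}$ decays geometrically in $s-\ell$ relative to $\sigma_\ell$ at an effective rate of $\rho_M/2$, so the first sum is at most $\sigma_\ell\cdot\mathcal{O}(1/\rho_M)$; applying the same argument twice (once in the $(s-j)$ variable at rate $\rho_M$, once in the $(j-\ell)$ variable at rate $\rho_F$) bounds the double sum by $\sigma_\ell\cdot\mathcal{O}(1/(\rho_M\rho_F))$. Collecting constants carefully gives $c_\ell \le 2\sigma_\ell(M_1\rho_F+2M_2)/(\rho_M\rho_F)=2\Theta\sigma_\ell$.

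Finally, the $L$-Lipschitz continuity of each $\nabla f_i$ gives $G_\ell\le L^2\sum_{k=m\ell}^{m(\ell+1)-1}\E\|x_{k+1}-x_k\|^2$, and substituting into the coefficient bound above yields exactly the stated inequality. The main obstacle is the geometric-series bookkeeping in the coefficient $c_\ell$: the hypothesis on $\sigma_s$ is calibrated precisely to provide the ``slack'' $(1-\rho/2)^{ms}$ rather than $(1-\rho)^{ms}$ that is required to absorb the powers of $(1-\rho_M)^{m(s-\ell)}$ and $(1-\rho_F)^{m(j-\ell)}$ in the worst case $\rho_M=\rho$ or $\rho_F=\rho$, and verifying that the constants combine to give exactly $\Theta=(M_1\rho_F+2M_2)/(\rho_M\rho_F)$ with the factor of $2$ shown in the statement requires care.
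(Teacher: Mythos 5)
Your proposal is correct and takes essentially the same route as the paper — unroll the two geometric recursions in the BMSE property, use the hypothesis on $\sigma_s$ to absorb the decaying weights, interchange the order of summation, and finish with Lipschitz continuity of each $\nabla f_i$ — with the only structural difference being that you collect everything into a single per-epoch coefficient $c_\ell$, whereas the paper first bounds $\sum_s\sigma_s\calF_{ms}$ and then feeds that estimate into the unrolled $\calM$ recursion. Your observation that the inner $k$-sum in the definition of $\calF_{ms}$ should be taken over epoch $\ell$ (yielding $G_\ell$) rather than epoch $s$ agrees with the paper's intent: the literal text of Definition \ref{def:bmse} contains a typo, and both the paper's proof and the verification in Lemma \ref{lem:sargemse} only make sense under the corrected reading you adopt.
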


\begin{proof}
First, we derive a bound on the sequence $\calF_{m s}$ arising in the BMSE property. Summing this sequence from $s=0$ to $s=S$,
\beqs\label{eq:est}
\begin{aligned}
\sum_{s=0}^{S} \sigma_s \calF_{m s} 
&\le \sum_{s=0}^{S} \sum_{\ell=0}^s \tfrac{M_2 \sigma_s (1-\rho_F)^{m (s - \ell)}}{n} \sum_{k = m s}^{m(s+1)-1} \sum_{i=1}^n \E \ca{ \| \nabla f_i(x_{k+1}) - \nabla f_i(x_k) \|^2 } \\
&\symnum{1}{\le} \sum_{s=0}^{S} \sum_{\ell=0}^s \tfrac{M_2 \sigma_\ell (1-\frac{\rho_F}{2})^{m (s - \ell)}}{n} \sum_{k = m s}^{m(s+1)-1} \sum_{i=1}^n \E \ca{ \| \nabla f_i(x_{k+1}) - \nabla f_i(x_k) \|^2 } \\
&\le \sum_{s=0}^{S} \left( \sum_{\ell=0}^\infty ( 1 - \tfrac{\rho_F}{2} )^\ell \right) \tfrac{M_2 \sigma_s}{n} \sum_{k = m s}^{m(s+1)-1} \sum_{i=1}^n \E \ca{ \| \nabla f_i(x_{k+1}) - \nabla f_i(x_k) \|^2 } \\
&= \sum_{s=0}^{S} \tfrac{2 M_2 \sigma_s}{n \rho_F} \sum_{k = m s}^{m(s+1)-1} \sum_{i=1}^n \E \ca{ \| \nabla f_i(x_{k+1}) - \nabla f_i(x_k) \|^2 } .
\end{aligned}
\eeqs
Inequality \numcirc{1} uses the fact that $\sigma_s (1-\rho_F)^{m s} \le \sigma_{s-1} ( 1-\frac{\rho_F}{2} )^{m s}$. With this bound on $\calF_{m s}$, we proceed to bound $\calM_{m s}$ similarly.
\beqs
\begin{aligned}
\sum_{s=0}^{S} \sigma_s \calM_{m s}
&\symnum{1}{\le} \sum_{s=0}^{S} \sigma_s \bPa{ \calF_{m s} + \tfrac{M_1}{n}
\sum_{k = m s}^{m (s+1) - 1} \sum_{i=1}^n \E \ca{ \|\nabla f_i(x_{k+1}) - \nabla f_i(x_k) \|^2 } } + (1-\rho_M)^m \sum_{s=1}^S \sigma_s \calM_{m (s-1)} \\
&\symnum{2}{\le} \sum_{s=0}^{S} \sigma_s \bPa{ \tfrac{M_1 \rho_F + 2 M_2}{n \rho_F} \sum_{k = m s}^{m (s+1) - 1} \sum_{i=1}^n \E \ca{ \| \nabla f_i(x_{k+1}) - \nabla f_i(x_k) \|^2 } } + (1-\tfrac{\rho_M}{2})^m \sum_{s=1}^S \sigma_{s-1} \calM_{m (s-1)} \\
&= \sum_{s=0}^{S} \sigma_s \bPa{ \tfrac{M_1 \rho_F + 2 M_2}{n \rho_F} \sum_{k = m s}^{m (s+1) - 1} \sum_{i=1}^n \E \ca{ \| \nabla f_i(x_{k+1}) - \nabla f_i(x_k) \|^2 }} \\
& \quad + (1-\tfrac{\rho_M}{2})^m \sum_{s=1}^S \sigma_{s-1} \bPa{ \tfrac{M_1 \rho_F + 2 M_2}{n \rho_F} \sum_{k = m (s-1)}^{m s - 1} \sum_{i=1}^n \E \ca{ \| \nabla f_i(x_{k+1}) - \nabla f_i(x_k) \|^2 }} + \cdots \\
&\le \left( \sum_{\ell = 0}^\infty (1 - \tfrac{\rho_M}{2})^{m \ell} \right) \sum_{s=0}^{S} \sigma_s \bPa{ \tfrac{M_1 \rho_F + 2 M_2}{n \rho_F} \sum_{k = m s}^{m (s+1) - 1} \sum_{i=1}^n \E \ca{ \| \nabla f_i(x_{k+1}) - \nabla f_i(x_k) \|^2 }} \\
&\symnum{3}{\le} \sum_{s=0}^{S} \tfrac{2 \sigma_s \Theta}{n} \sum_{k = m s}^{m (s+1) - 1} \sum_{i=1}^n \E \ca{ \| \nabla f_i(x_{k+1}) - \nabla f_i(x_k) \|^2 } \\
&\symnum{4}{\le} 2 \Theta L^2 \sum_{s=0}^{S} \sigma_s \sum_{k = m s}^{m (s+1) - 1} \E \ca{ \| x_{k+1} - x_k \|^2 } .
\end{aligned}
\eeqs
Inequality \numcirc{1} follows uses the fact that $\mathcal{M}_m \le (1 - \rho_M)^m \mathcal{M}_{m (s - 1)}$. Inequality \numcirc{2} uses $\sigma_s (1-\rho_M)^{m s} \le \sigma_{s-1} ( 1-\frac{\rho_M}{2} )^{m s}$, \numcirc{3} uses the same estimate we applied in \eqref{eq:est}, and \numcirc{4} uses the Lipschitz continuity of $\nabla f_i$.
\end{proof}

\begin{prooftext}{Proof of Lemma \ref{lem:couple1main}}
By assumption, $1-\frac{1}{\theta} \ge 0$, so we can apply convexity to obtain
\beqs
\begin{aligned}
& \tfrac{\eta}{\theta} \pa{ f(x_k) - f(\xsol) } + \tfrac{\eta}{n} \pa{ 1 - \tfrac{1}{\theta} } \Pa{ \msum_{i=1}^n f_i(\varphi_k^i) - f_i(\xsol) } \\
&\le \tfrac{\eta}{\theta} \langle \nabla f(x_k), x_k - \xsol \rangle + \tfrac{\eta}{n} \pa{ 1 - \tfrac{1}{\theta} } \sum_{i=1}^n \langle \nabla f_i(\varphi_k^i), \varphi_k^i - \xsol \rangle \\
&= \tfrac{\eta}{\theta} \langle \nabla f(x_k), x_k - \xsol \rangle + \tfrac{\eta}{n} \pa{ 1 - \tfrac{1}{\theta} } \sum_{i=1}^n \langle \nabla f_i(\varphi_k^i), x_k - \xsol \rangle + \tfrac{\eta}{n} \pa{ 1 - \tfrac{1}{\theta} } \sum_{i=1}^n \langle \nabla f_i(\varphi_k^i), \varphi_k^i - x_k \rangle.
\end{aligned}
\eeqs
Because $\tnablak{k}$ is memory-biased,
\beqs
\begin{aligned}
\tfrac{1}{\theta} \nabla f(x_k) + \tfrac{1}{n} \pa{ 1 - \tfrac{1}{\theta} } \sum_{i=1}^n \nabla f_i(\varphi_k^i) 
= \Ek \ca{ \tnablak{k} } .
\end{aligned}
\eeqs
Therefore,
\beqs
\begin{aligned}
&\tfrac{\eta}{\theta} \langle \nabla f(x_k), x_k - \xsol \rangle + \tfrac{\eta}{n} \pa{ 1 - \tfrac{1}{\theta} } \sum_{i=1}^n \langle \nabla f_i(\varphi_k^i), x_k - \xsol \rangle \\
&= \Ek \Ca{ \eta \langle \tnablak{k}, x_k - \xsol \rangle } \\
&= \Ek [ \eta \langle \tnablak{k}, x_k - x_{k+1} \rangle + \eta \langle \tnablak{k}, x_{k+1} - \xsol \rangle ] \\
&\le \Ek \Ca{ \eta \langle \tnablak{k}, x_k - x_{k+1} \rangle - \tfrac{1}{2} \|x_{k+1} - x_k\|^2 + \tfrac{1}{2} \|x_k - \xsol\|^2 - \tfrac{1 + \mu \eta}{2} \|x_{k+1} - \xsol\|^2 - \eta g(x_{k+1}) + \eta g(\xsol) } .
\end{aligned}
\eeqs
The inequality is due to Lemma \ref{lem:prox} with $z = x_{k+1}$, $x = x_k$, $d = \tnablak{k}$, and $y = \xsol$. Combining these two inequalities, we have shown
\beqs\label{eq:first}
\begin{aligned}
& \tfrac{\eta}{\theta} \pa{ f(x_k) - f(\xsol) } + \tfrac{\eta}{n} \pa{ 1 - \tfrac{1}{\theta} } \sum_{i=1}^n \Pa{f_i(\varphi_k^i) - f_i(\xsol) } \\
&\le \Ek \Ca{ \eta \langle \tnablak{k}, x_k - x_{k+1} \rangle - \tfrac{1}{2} \|x_{k+1} - x_k\|^2 - \eta g(x_{k+1}) + \eta g(\xsol) \\
&\qquad + \tfrac{1}{2} \|x_k - \xsol\|^2 - \tfrac{1 + \mu \eta}{2} \|x_{k+1} - \xsol\|^2 + \tfrac{\eta}{n} \pa{ 1 - \tfrac{1}{\theta} } \sum_{i=1}^n \langle \nabla f_i(\varphi_k^i), \varphi_k^i - x_k \rangle } .
\end{aligned}
\eeqs
We bound the first three terms on the right further.
\beqs
\begin{aligned} 
&\eta \langle \tnablak{k}, x_k - x_{k+1} \rangle - \tfrac{1}{2} \|x_{k+1} - x_k\|^2 - \eta g(x_{k+1}) \\
&= \eta ( \langle \nabla f(x_k), x_k - x_{k+1} \rangle - g(x_{k+1}) ) + \eta \langle \tnablak{k} - \nabla f(x_k), x_k - x_{k+1} \rangle - \tfrac{1}{2} \|x_{k+1} - x_k\|^2 \\
&\symnum{1}{\le} \eta \pa{ f(x_k) - F(x_{k+1}) } + \eta \langle \tnablak{k} - \nabla f(x_k), x_k - x_{k+1} \rangle + \pa{ \tfrac{\eta L}{2} - \tfrac{1}{2} } \|x_{k+1} - x_k\|^2 \\
&\symnum{2}{\le} \eta \pa{ f(x_k) - F(x_{k+1}) } + \tfrac{\eta}{2 L \lambda} \| \tnablak{k} - \nabla f(x_k) \|^2 + \pa{ \tfrac{\eta L (\lambda + 1)}{2} - \tfrac{1}{2} } \|x_{k+1} - x_k\|^2.
\end{aligned}
\eeqs
Inequality \numcirc{1} is due to the Lipschitz continuity of $\nabla f$, and inequality \numcirc{2} is Young's. Combining this bound with \eqref{eq:first} and rearranging terms, we have shown that
\beqs
\begin{aligned}
0 &\le - \eta \Ek [ F(x_{k+1}) - F(\xsol) ] + \tfrac{\eta}{2 L \lambda} \Ek \ca{ \|\tnablak{k} - \nabla f(x_k) \|^2 } \\
&\qquad - \tfrac{1 + \mu \eta}{2} \Ek \ca{ \|x_{k+1} - \xsol\|^2 } + \tfrac{1}{2} \|x_k - \xsol\|^2 + ( \tfrac{\eta L ( \lambda + 1 )}{2} - \tfrac{1}{2} ) \Ek \ca{ \|x_{k+1} - x_k\|^2 } \\
&\qquad + \eta \pa{ 1 - \tfrac{1}{\theta} } \bPa{ f(x_k) - \tfrac{1}{n} \sum_{i=1}^n f_i(\varphi_k^i) + \tfrac{1}{n} \sum_{i=1}^n \langle \nabla f_i(\varphi_k^i), \varphi_k^i - x_k \rangle } .
\end{aligned}
\eeqs
We use Lemma \ref{lem:2L} to bound the final term, yielding the desired inequality.
\end{prooftext}

\begin{prooftext}{Proof of Theorem \ref{thm:rate_mem} (Convex Case)}
We begin with the inequality of Lemma \ref{lem:couple1main} with $\mu = 0$. Multiplying the inequality of Lemma \ref{lem:descent} with $z = x_{k+1}$, $x = x_k$, and $d = \tnablak{k}$ by a non-negative constant $\delta$ and adding it to the inequality of Lemma \ref{lem:couple1main}, we obtain
\beqs
\begin{aligned}
& \eta \Ek [ F(x_{k+1}) - F(\xsol) + \delta (F(x_{k+1}) - F(x_k)) ] \\
&\le \tfrac{\eta ( 1 + \delta )}{2 L \lambda} \Ek \ca{ \|\tnablak{k} - \nabla f(x_k) \|^2 } - \tfrac{1}{2} \Ek \ca{ \|x_{k+1} - \xsol\|^2 } + \tfrac{1}{2} \|x_k - \xsol\|^2 \\
&\qquad + \pa{ \tfrac{\eta L (1 + \delta) ( \lambda + 1 )}{2} - \tfrac{1 + 2 \delta}{2} } \Ek \ca{ \|x_{k+1} - x_k\|^2 } + \tfrac{\eta L}{2 n} \pa{ 1 - \tfrac{1}{\theta} } \sum_{i=1}^n \|x_k - \varphi_k^i\|^2.
\end{aligned}
\eeqs
Applying the full expectation operator and summing from $k=0$ to $k = T-1$, we have
\beqs
\begin{aligned}
&\eta \sum_{k=0}^{T-1} \E [ F(x_{k+1}) - F(\xsol) ] + \eta \delta \pa{ \E \ca{ F(x_T) } - F(x_0) } \\
&\le - \tfrac{1}{2} \E \ca{ \|x_T - \xsol\|^2 } + \tfrac{1}{2} \|x_0 - \xsol\|^2 + \sum_{k=0}^{T-1} \E \Ca{ \tfrac{\eta (1 + \delta)}{2 L \lambda} \|\tnablak{k} - \nabla f(x_k) \|^2 \\
& \qquad + \pa{ \tfrac{\eta L (1 + \delta) ( \lambda + 1 )}{2} - \tfrac{1 + 2 \delta}{2} } \|x_{k+1} - x_k\|^2 + \tfrac{\eta L}{2 n} \pa{ 1 - \tfrac{1}{\theta} } \msum_{i=1}^n \|x_k - \varphi_k^i\|^2 } .
\end{aligned}
\eeqs
We use Lemma \ref{lem:msebound} with $\sigma_s = 1$ to bound the MSE, and we use the fact that the gradient estimator is memory-biased to bound the term $1 / n \sum_{i=1}^n \|x_k - \varphi_k^i\|^2$. This leaves
\beqs
\begin{aligned}
\label{eq:finaltermnonpos}
\eta \sum_{k=0}^{T-1} \E [ F(x_{k+1}) - F(\xsol) ] 
&\le - \tfrac{1}{2} \E \ca{ \|x_T - \xsol\|^2 } + \tfrac{1}{2} \|x_0 - \xsol\|^2 + \eta \delta ( F(x_0) - \E \ca{ F(x_T) } ) \\
&+ \pa{ \tfrac{\eta L (1 + \delta) ( \lambda + 1 )}{2} + \tfrac{\Theta \eta L (1 + \delta)}{\lambda} + \tfrac{B_1 \eta L}{2} \pa{ 1 - \tfrac{1}{\theta} } - \tfrac{1 + 2 \delta}{2} } \sum_{k=0}^{T-1} \E \ca{ \|x_{k+1} - x_k\|^2 } .
\end{aligned}
\eeqs
Setting $\lambda = \sqrt{2 \Theta}$ minimizes the coefficient of the term on the final line. With
\beq
\eta 
\le \tfrac{1}{L (1 + 2 \sqrt{2 \Theta} + \tfrac{B_1 (1 - {1}/{\theta})}{1 + \delta}) },
\eeq
the final term in \eqref{eq:finaltermnonpos} is non-positive, so we can drop it from the inequality along with the term $-1/2 \E \|x_T - \xsol\|^2$. Using the fact that $ - F(x_T) \le - F(\xsol)$, this leaves
\beq
\sum_{k=0}^{T-1} \E [ F(x_{k+1}) - F(\xsol) ] 
\le \tfrac{1}{2 \eta} \|x_0 - \xsol\|^2 + \eta \delta \pa{ F(x_0) - F(\xsol) }.
\eeq
We use the convexity of $F$ to rewrite this inequality as a bound on the suboptimality of the average iterate
\beq
\E \ca{ F(\xbar_{T}) - F(\xsol) } 
\le \sfrac{1}{T} \sum_{k=0}^{T-1} \E [ F(x_{k+1}) - F(\xsol) ] 
\le \sfrac{1}{2 \eta T} \|x_0 - \xsol\|^2 + \tfrac{\eta \delta}{T} \pa{ F(x_0) - F(\xsol) } .
\eeq
Setting $\delta = \max \{B_1 (1 - 1/\theta) / \sqrt{2 \Theta} - 1, 0\}$ approximately minimizes the right side, proving the assertion.
\end{prooftext}

\begin{prooftext}{Proof of Theorem \ref{thm:rate_mem} (Strongly Convex Case)}
As in the proof of the convex case, we begin with the inequality of Lemma \ref{lem:couple1main}, multiply the inequality of Lemma \ref{lem:descent} with $z = x_{k+1}$, $x = x_k$, and $d = \tnablak{k}$ by a non-negative constant $\delta$, and add the two inequalities.
\beqs
\begin{aligned}
& \eta \Ek [ F(x_{k+1}) - F(\xsol) + \delta \pa{ F(x_{k+1}) - F(x_k) } ] \\
&\le - \tfrac{1 + \mu \eta}{2} \Ek \ca{ \|x_{k+1} - \xsol\|^2 } + \tfrac{1}{2} \|x_k - \xsol\|^2 + \Ek \bCa{ \tfrac{\eta (1 + \delta)}{2 L \lambda} \|\tnablak{k} - \nabla f(x_k) \|^2 \\
& \qquad + \pa{ \tfrac{\eta L (1 + \delta) ( \lambda + 1 )}{2} - \tfrac{1 + \delta (2 + \mu \eta)}{2} } \|x_{k+1} - x_k\|^2 + \tfrac{\eta L}{2 n} \pa{ 1 - \tfrac{1}{\theta} } \msum_{i=1}^n \|x_k - \varphi_k^i\|^2 }.
\end{aligned}
\eeqs
Applying the full expectation operator, multiplying by $(1 + \mu \eta)^k$, and summing over the epoch $k=m s$ to $k = m (s+1)-1$ for some $s \in \mathbb{N}_0$, we have
\beqs
\begin{aligned}
& \eta \sum_{k=ms}^{m (s+1)-1} (1 + \mu \eta)^k \E [ F(x_{k+1}) - F(\xsol) + \delta (F(x_{k+1}) - F(x_k)) ] \\
&\le - \tfrac{(1 + \mu \eta)^{m (s + 1)}}{2} \E \|x_{m (s + 1)} - \xsol\|^2 + \tfrac{(1 + \mu \eta)^{m s}}{2} \mathbb{E} \|x_{m s} - \xsol\|^2\\
&\qquad + \sum_{k = m s}^{m (s+1)-1} (1 + \mu \eta)^k \E \bCa{ \tfrac{\eta (1 + \delta)}{2 L \lambda} \|\tnablak{k} - \nabla f(x_k) \|^2+ \pa{ \tfrac{\eta L (1 + \delta) ( \lambda + 1 )}{2} - \tfrac{1 + \delta (2 + \mu \eta)}{2} } \|x_{k+1} - x_k\|^2 \\&\qquad + \tfrac{\eta L}{2 n} \pa{ 1 - \tfrac{1}{\theta} } \msum_{i=1}^n \|x_k - \varphi_k^i\|^2 } .
\end{aligned}
\eeqs
Using the fact that $\eta \le \frac{1}{\mu m}$,
\beq
\label{eq:e}
(1 + \mu \eta)^k 
\le (1 + \mu \eta)^{m (s + 1)} 
\le (1 + \mu \eta)^{m s} \lim_{m \to \infty} \pa{ 1 + \tfrac{1}{m} }^m 
= e (1 + \mu \eta)^{m s} 
\le 3 (1 + \mu \eta)^{m s},
\eeq
where $e$ is Euler's number. Therefore,
\beqs
\begin{aligned}
& \eta \sum_{k=ms}^{m (s+1)-1} (1 + \mu \eta)^k \E [ F(x_{k+1}) - F(\xsol) + \delta (F(x_{k+1}) - F(x_k)) ] \\
&\le - \tfrac{(1 + \mu \eta)^{m (s + 1)}}{2} \E \ca{ \|x_{m (s + 1)} - \xsol\|^2 } + \tfrac{(1 + \mu \eta)^{m s}}{2} \|x_{m s} - \xsol\|^2 \\
&\qquad + (1 + \mu \eta)^{m s} \sum_{k = m s}^{m (s+1)-1} \E \bCa{ \tfrac{3 \eta (1 + \delta)}{2 L \lambda} \|\tnablak{k} - \nabla f(x_k) \|^2 + \pa{ \tfrac{3 \eta L (1 + \delta) ( \lambda + 1 )}{2} - \tfrac{1 + \delta (2 + \mu \eta)}{2} } \|x_{k+1} - x_k\|^2 \\&\qquad + \tfrac{3 \eta L}{2 n} \pa{ 1 - \tfrac{1}{\theta} } \msum_{i=1}^n \|x_k - \varphi_k^i\|^2 } .
\end{aligned}
\eeqs
Summing the inequality from epoch $s=0$ to $s = S - 1$,
\beqs
\begin{aligned}
& \eta \sum_{k=0}^{m S - 1} (1 + \mu \eta)^k \E [ F(x_{k+1}) - F(\xsol) + \delta (F(x_{k+1}) - F(x_k)) ] \\
&\le \sum_{s=0}^{S-1} (1 + \mu \eta)^{m s} \sum_{k = m s}^{m (s+1)-1} \E \bCa{ \tfrac{3 \eta (1 + \delta)}{2 L \lambda} \|\tnablak{k} - \nabla f(x_k) \|^2 + \pa{ \tfrac{3 \eta L (1 + \delta) ( \lambda + 1 )}{2} - \tfrac{1+\delta(2 + \mu \eta)}{2} } \|x_{k+1} - x_k\|^2 \\
&\qquad + \tfrac{3 \eta L (1 + \delta)}{2 n} \pa{ 1 - \tfrac{1}{\theta} } \msum_{i=1}^n \|x_k - \varphi_k^i\|^2 } - \tfrac{(1 + \mu \eta)^{m S}}{2} \E \|x_{m s} - \xsol\|^2 + \tfrac{1}{2} \|x_0 - \xsol\|^2 .
\end{aligned}
\eeqs
We use Lemma \ref{lem:msebound} with $\sigma_s = (1 + \mu \eta)^{m s}$ to bound the MSE. Recall $\rho = \min\{ \rho_M, \rho_F \}$ and $\eta \le \frac{\rho}{2 \mu}$. This choice for $\sigma_s$ satisfies the conditions of Lemma \ref{lem:msebound} because $(1 + \mu \eta)^{m s} (1 - \rho)^{m s} \le (1 + \mu \eta)^{m (s-1)} (1 - \rho/2)^{m s}$. We use the fact that the gradient estimator is memory-biased to bound the term $1 / n \sum_{i=1}^n \|x_k - \varphi_k^i\|^2$. This leaves
\beqs
\begin{aligned}
\label{eq:nonpos}
& \eta \sum_{k=0}^{m S - 1} (1 + \mu \eta)^k \E [ F(x_{k+1}) - F(\xsol) + \delta (F(x_{k+1}) - F(x_k)) ] \\
&\le - \tfrac{(1 + \mu \eta)^{m S}}{2} \E \ca{ \|x_{m S} - \xsol\|^2 } + \tfrac{1}{2} \|x_0 - \xsol\|^2 + C \sum_{s=0}^{S-1} (1 + \mu \eta)^{m s} \sum_{k = m s}^{m (s+1)-1} \E \ca{ \|x_{k+1} - x_k\|^2 } ,
\end{aligned}
\eeqs
where $C = { \frac{3 \eta L (1 + \delta) ( \lambda + 1 )}{2} + \frac{3 \Theta \eta L (1 + \delta)}{\lambda} + \frac{3 B_1 \eta L}{2} \pa{ 1 - \frac{1}{\theta} } - \frac{1 + \delta (2 + \mu \eta)}{2} }$. We must choose $\eta, \lambda$, and $\delta$ so that $C \le 0$.
Setting $\lambda = \sqrt{2 \Theta}$ minimizes $C$ over $\lambda$. Using the approximation $\delta(2 + \mu \eta) \ge \delta$, we see that $C$ is non-positive if
\beq
\eta 
\le \tfrac{1}{3 L (1 + 2 \sqrt{2 \Theta} + \frac{ B_1 ( 1 - {1}/{\theta}) }{1 + \delta}) }.
\eeq
Setting $\delta = \max \{B_1(1 - 1/\theta)/\sqrt{2 \Theta} - 1, 0\}$, we are guaranteed that 
\beq
\tfrac{1}{3 L (1 + 3 \sqrt{2 \Theta})} 
\le \tfrac{1}{3 L (1 + 2 \sqrt{2 \Theta} + \frac{ B_1 \pa{ 1 - {1}/{\theta} } }{1 + \delta}) },
\eeq
so the step size in the theorem statement ensures $C \le 0$, and the final term in \eqref{eq:nonpos} is non-positive. Dropping this non-positive term from the inequality, we have
\beqs\label{eq:strcon}
\begin{aligned}
&\eta (1 + \delta) \sum_{k=0}^{m S-1} (1 + \mu \eta)^k \E [ F(x_{k+1}) - F(\xsol) ] + \delta \eta \sum_{k=0}^{m S-1} (1 + \mu \eta)^k \E \ca{ F(x_k) - F(\xsol) } \\
&\le - \tfrac{(1 + \mu \eta)^{m S}}{2} \E \ca{ \|x_{m S} - \xsol\|^2 } + \tfrac{1}{2} \|x_0 - \xsol\|^2 .
\end{aligned}
\eeqs
We would like to show that $1 + \delta \ge (1 + \mu \eta) \delta$ so that the terms on the first line telescope.

We use the fact that $\eta \le \tfrac{\sqrt{2 \Theta}}{B_1 \mu (1 - 1 / \theta)}$ to say
\begin{equation}
    \tfrac{1}{\mu \eta} \ge \tfrac{B_1 (1 - 1 / \theta)}{\sqrt{2 \Theta}} \ge \delta
\end{equation}
Hence,
\beq
\tfrac{1 + \delta}{\delta} \ge 1 + \mu \eta,
\eeq
so inequality \eqref{eq:strcon} simplifies to
\beq
(1 + \mu \eta)^{m S} \E [ \eta \delta ( F(x_{m S}) - F(\xsol) ) + \tfrac{1}{2} \|x_{m S} - \xsol\|^2 ] 
\le \eta \delta \pa{ F(x_0) - F(\xsol) } + \tfrac{1}{2} \|x_0 - \xsol\|^2 , 
\eeq
which implies the result.
\end{prooftext}

\section{Proof of Theorem \ref{thm:rate_recurs}}\label{sec:proof_recur}

The following two lemmas establish an analogue of Lemma \ref{lem:couple1main} for recursively biased estimators.

\begin{lemma}
\label{lem:couple1}
Suppose $\tnabla$ is recursively biased with parameters $\rho_B$ and $\nu$. Suppose $g$ is $\mu$-strongly convex with $\mu \ge 0$, and let $\lambda > 0$ be a constant whose value we determine later. The following inequality holds:
\beqs
\begin{aligned}
0 &\le - \eta \Ek [ F(x_{k+1}) - F(\xsol) ] + \tfrac{\eta}{2 L \lambda} \Ek \ca{ \|\tnablak{k} - \nabla f(x_k) \|^2 } \\
&\qquad - \tfrac{1 + \mu \eta}{2} \Ek \ca{ \|x_{k+1} - \xsol\|^2 } + \tfrac{1}{2} \|x_k - \xsol\|^2 \\&\qquad + \pa{ \tfrac{\eta L ( \lambda + 1 )}{2} - \tfrac{1}{2} } \Ek \ca{ \|x_{k+1} - x_k\|^2 } + \eta (1-\rho_B) \langle \nabla f(x_{k-1}) - \tnabla_{k-1}, x_k - \xsol \rangle . 
\end{aligned}
\eeqs
\end{lemma}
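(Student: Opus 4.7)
The plan is to mimic the proof of Lemma \ref{lem:couple1main} but replace the memory-bias decomposition of $\Ek[\tnablak{k}]$ with the recursive one $\nabla f(x_k) - \Ek[\tnablak{k}] = (1-\rho_B)(\nabla f(x_{k-1}) - \tnabla_{k-1})$ (the case $k \in \nu \mathbb{N}_0$ is trivial since then the extra term vanishes and we recover a standard unbiased-estimator argument). First, I would start from convexity of $f$ to write
\begin{equation}
\eta(f(x_k) - f(\xsol)) \le \eta \langle \nabla f(x_k), x_k - \xsol \rangle,
\end{equation}
and then split the right-hand side as
\begin{equation}
\eta \langle \nabla f(x_k), x_k - \xsol \rangle = \eta \langle \Ek[\tnablak{k}], x_k - \xsol \rangle + \eta \langle \nabla f(x_k) - \Ek[\tnablak{k}], x_k - \xsol \rangle.
\end{equation}
By the recursive-bias definition, the second inner product equals $\eta (1-\rho_B) \langle \nabla f(x_{k-1}) - \tnabla_{k-1}, x_k - \xsol\rangle$, which is precisely the residual term appearing in the statement.

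Next, for the first inner product I would apply Lemma \ref{lem:prox} with $z = x_{k+1}$, $x = x_k$, $d = \tnablak{k}$, $y = \xsol$, after rewriting
\begin{equation}
\eta \langle \Ek[\tnablak{k}], x_k - \xsol \rangle = \Ek\bigl[\eta \langle \tnablak{k}, x_k - x_{k+1}\rangle + \eta \langle \tnablak{k}, x_{k+1} - \xsol \rangle\bigr].
\end{equation}
Lemma \ref{lem:prox} converts the second piece into the telescoping distances $\tfrac{1}{2}\|x_k - \xsol\|^2 - \tfrac{1+\mu\eta}{2}\|x_{k+1}-\xsol\|^2 - \tfrac{1}{2}\|x_{k+1}-x_k\|^2$ plus proximal function values $-\eta g(x_{k+1}) + \eta g(\xsol)$. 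To handle the leftover $\eta\langle \tnablak{k}, x_k - x_{k+1}\rangle - \tfrac{1}{2}\|x_{k+1}-x_k\|^2 - \eta g(x_{k+1})$, I would add and subtract $\eta \nabla f(x_k)$, use the descent inequality for the Lipschitz gradient $f$ on $\langle \nabla f(x_k), x_k - x_{k+1}\rangle - g(x_{k+1})$ to bound it by $f(x_k) - F(x_{k+1}) + \tfrac{L}{2}\|x_{k+1}-x_k\|^2$, and apply Young's inequality with parameter $\lambda > 0$ to the cross term $\eta \langle \tnablak{k} - \nabla f(x_k), x_k - x_{k+1}\rangle$, producing $\tfrac{\eta}{2L\lambda}\|\tnablak{k} - \nabla f(x_k)\|^2 + \tfrac{\eta L \lambda}{2}\|x_{k+1} - x_k\|^2$.

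Collecting all the pieces, the term $\eta(f(x_k) - f(\xsol))$ cancels with the $\eta f(x_k)$ coming from the descent step and combines with the $-\eta F(x_{k+1})$ contribution plus $\eta g(\xsol)$ to yield $-\eta(F(x_{k+1}) - F(\xsol))$ on the left-hand side (after moving everything to the right). The quadratic $\|x_{k+1}-x_k\|^2$ coefficients aggregate to $\tfrac{\eta L(\lambda+1)}{2} - \tfrac{1}{2}$, and the bias residual stays as the recursive inner product. I expect the main nuisance will be bookkeeping, since no genuinely new idea is needed beyond Lemma \ref{lem:couple1main}; the key conceptual point is simply that recursive bias produces a term linear in $\langle \nabla f(x_{k-1}) - \tnabla_{k-1}, x_k - \xsol \rangle$ rather than a stored-gradient stalenness term, which is then controlled separately by Lemma \ref{lem:bias}.
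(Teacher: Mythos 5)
Your proposal follows the paper's proof exactly: start from convexity of $f$, decompose $\eta\langle\nabla f(x_k), x_k-\xsol\rangle$ into $\Ek[\eta\langle\tnablak{k},x_k-\xsol\rangle]$ plus the recursive-bias residual, apply Lemma \ref{lem:prox} to $\langle\tnablak{k},x_{k+1}-\xsol\rangle$, and finish with the Lipschitz descent bound and Young's inequality just as in the tail of Lemma \ref{lem:couple1main}'s proof. This matches the paper's argument (the paper literally states ``the rest of the proof follows the proof of Lemma \ref{lem:couple1main}''), so the proposal is correct and takes the same approach.
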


\begin{proof}
Applying the convexity of $f$ yields
\beqs\label{eq:sargecon}
\begin{aligned}
&\eta \pa{ f(x_k) - f(\xsol) } \\
&\le \eta \langle \nabla f(x_k), x_k - \xsol \rangle \\
&= \eta \langle \nabla f(x_k) - ( 1 - \rho_B ) ( \nabla f(x_{k-1}) - \tnabla_{k-1} ), x_k - \xsol \rangle + \eta ( 1 - \rho_B ) \langle \nabla f(x_{k-1}) - \tnabla_{k-1}, x_k - \xsol \rangle .
\end{aligned}
\eeqs
Because the estimator is recursively biased,
\beq
\Ek \ca{ \tnablak{k} } 
= \nabla f(x_k) - ( 1 - \rho_B ) ( \nabla f(x_{k-1}) - \tnabla_{k-1} ).
\eeq
Therefore,
\beqs\label{eq:lemma3}
\begin{aligned}
& \eta \langle \nabla f(x_k) - ( 1 - \rho_B ) ( \nabla f(x_{k-1}) - \tnabla_{k-1} ), x_k - \xsol \rangle \\
&= \Ek [ \eta \langle \tnablak{k}, x_k - \xsol \rangle ] \\
&= \Ek [ \eta \langle \tnablak{k}, x_k - x_{k+1} \rangle + \eta \langle \tnablak{k}, x_{k+1} - \xsol \rangle ] \\
&\le \Ek [ \eta \langle \tnablak{k}, x_k - x_{k+1} \rangle - \tfrac{1}{2} \|x_{k+1} - x_k\|^2 + \tfrac{1}{2} \|x_k - \xsol\|^2 - \tfrac{1}{2} \|x_{k+1} - \xsol\|^2 + \eta g(x_{k+1}) - \eta g(\xsol) ],
\end{aligned}
\eeqs
The inequality is due to Lemma \ref{lem:prox}. The rest of the proof follows the proof of Lemma \ref{lem:couple1main}.
\end{proof}

\begin{prooftext}{Proof of Lemma \ref{lem:bias}}
Because $x_{k-1}$ is independent of $j_{k-1}$, we can use the BMSE property
\beqs
\begin{aligned}
&\E \langle \nabla f(x_{k-1}) - \tnabla_{k-1}, x_k - \xsol \rangle \\
&\symnum{1}{=} \E \ca{ \langle \nabla f(x_{k-1}) - \tnabla_{k-1}, x_k - x_{k-1} \rangle + \langle \nabla f(x_{k-1}) - \E_{k-1} \tnabla_{k-1}, x_{k-1} - \xsol \rangle } \\
&\symnum{2}{\le} \E \Ca{ \tfrac{\epsilon}{2} \| \nabla f(x_{k-1}) - \tnabla_{k-1} \|^2 + \tfrac{1}{2 \epsilon} \|x_k - x_{k-1}\|^2 + \langle \nabla f(x_{k-1}) - \E_{k-1} \tnabla_{k-1}, x_{k-1} - \xsol \rangle } \\
&\symnum{3}{=} \E \Ca{ \tfrac{ \epsilon}{2} \| \nabla f(x_{k-1}) - \tnabla_{k-1} \|^2 + \tfrac{1}{2 \epsilon} \|x_k - x_{k-1}\|^2 + (1 - \rho_B) \langle \nabla f(x_{k-2}) - \tnabla_{k-2}, x_{k-1} - \xsol \rangle } .
\end{aligned}
\eeqs
We can pass the conditional expectation $\mathbb{E}_{k-1}$ into the second inner-product in \numcirc{1} because $x_{k-1}$ is independent of $j_{k-1}$. Inequality \numcirc{2} is Young's, and \numcirc{3} uses the definition of a recursively biased gradient estimator.

This is a recursive inequality, and expanding the recursion gives
\beqs
\begin{aligned}
&\E \langle \nabla f(x_{k-1}) - \tnabla_{k-1}, x_k - \xsol \rangle \\
&\le \sum_{\ell = \nu s + 1}^{k-1} (1 - \rho_B)^{k - \ell - 1} \E \Ca{ \tfrac{ \epsilon}{2} \| \nabla f(x_\ell) - \tnabla_\ell \|^2 + \tfrac{1}{2 \epsilon} \|x_{\ell+1} - x_\ell \|^2 + (1 - \rho_B) \langle \nabla f(x_{\nu s}) - \tnabla_{\nu s}, x_{\nu s + 1} - \xsol \rangle } \\
&\symnum{1}{=} \sum_{\ell = \nu s + 1}^{k-1} (1 - \rho_B)^{k - \ell - 1} \E \Ca{ \tfrac{ \epsilon}{2} \| \nabla f(x_\ell) - \tnabla_\ell \|^2 + \tfrac{1}{2 \epsilon} \|x_{\ell+1} - x_\ell \|^2 } .
\end{aligned}
\eeqs
Equality \numcirc{1} is due to the fact that $\tnabla_{\nu s} = \nabla f(x_{\nu s})$. Taking the absolute value and summing this from $k = \nu s + 1$ to $k = \nu (s + 1) - 1$,
\beqs
\begin{aligned}
&\sum_{k = \nu s + 1}^{\nu (s + 1) - 1} | \E \langle \nabla f(x_{k-1}) - \tnabla_{k-1}, x_k - \xsol \rangle | \\
&\le \sum_{k = \nu s + 1}^{\nu (s + 1) - 1} \sum_{\ell = \nu s + 1}^{k-1} (1 - \rho_B)^{k - \ell - 1} \E \Ca{ \tfrac{\epsilon}{2} \| \nabla f(x_\ell) - \tnabla_\ell \|^2 + \tfrac{1}{2 \epsilon} \|x_{\ell+1} - x_\ell \|^2 } \\
&\le \min \bBa{\nu, \sum_{\ell = 0}^{\infty} (1 - \rho_B)^{\ell} } \sum_{k = \nu s + 1}^{\nu (s + 1) - 1} \E \Ca{ \tfrac{ \epsilon}{2} \| \nabla f(x_k) - \tnablak{k} \|^2 + \tfrac{1}{2 \epsilon} \|x_{k+1} - x_k \|^2 } \\
&\le \min \bBa{\nu, \tfrac{1}{\rho_B} } \sum_{k = \nu s + 1}^{\nu (s + 1) - 1} \E \Ca{ \tfrac{\epsilon}{2} \| \nabla f(x_k) - \tnablak{k} \|^2 + \tfrac{1}{2 \epsilon} \|x_{k+1} - x_k \|^2 } .
\end{aligned}
\eeqs
Summing this inequality from $s = 0$ to $s = S$ completes the proof.
\end{prooftext}

\begin{prooftext}{Proof of Theorem \ref{thm:rate_recurs} (Convex Case)}
To begin, we sum the inequality of Lemma \ref{lem:couple1} and the inequality of Lemma \ref{lem:descent} scaled by $\delta > 0$ with $z = x_{k+1}$, $x = x_k$, and $d = \tnablak{k}$.
\beqs\label{eq:delta}
\begin{aligned}
& \eta \Ek [ F(x_{k+1}) - F(\xsol) + \delta\pa{ F(x_{k+1}) - F(x_k) } ] \\
&\le - \tfrac{1}{2} \Ek \ca{ \|x_{k+1} - \xsol\|^2 } + \tfrac{1}{2} \|x_k - \xsol\|^2 + \Ek \bCa{ \tfrac{\eta (1+\delta)}{2 L \lambda} \|\tnablak{k} - \nabla f(x_k) \|^2 \\
&\qquad + (1 + \delta) \pa{ \tfrac{\eta L ( \lambda + 1 )}{2} - \tfrac{1}{2} } \|x_{k+1} - x_k\|^2 + \eta (1-\rho_B) \langle \nabla f(x_{k-1}) - \tnabla_{k-1}, x_k - \xsol \rangle } .
\end{aligned}
\eeqs
Applying the full expectation operator, setting $\mu = 0$, and summing from $k=0$ to $k = T-1$ where $T = m S$ for some $S \in \mathbb{N}$, we have
\beqs
\begin{aligned}
& \eta \sum_{k=0}^{T-1} \E [ F(x_{k+1}) - F(\xsol) ] + \eta \delta \E [ F(x_T) - F(x_0) ] \\
&\le - \tfrac{1}{2} \E \ca{ \|x_T - \xsol\|^2 } + \tfrac{1}{2} \|x_0 - \xsol\|^2 + \sum_{k=0}^{T-1} \E \bCa{ \tfrac{\eta (1+\delta)}{2 L \lambda} \|\tnablak{k} - \nabla f(x_k) \|^2 \\
&\qquad + (1 + \delta) \pa{ \tfrac{\eta L ( \lambda + 1 )}{2} - \tfrac{1}{2} } \|x_{k+1} - x_k\|^2 + \eta (1-\rho_B) \langle \nabla f(x_{k-1}) - \tnabla_{k-1}, x_k - \xsol \rangle } .
\end{aligned}
\eeqs
We use Lemma \ref{lem:bias} to bound the inner-product bias term.
\beqs
\begin{aligned}
& \eta \sum_{k=0}^{T-1} \E [ F(x_{k+1}) - F(\xsol) ] + \eta \delta \E [ F(x_T) - F(x_0) ] \\
&\le - \tfrac{1}{2} \E \ca{ \|x_T - \xsol\|^2 } + \tfrac{1}{2} \|x_0 - \xsol\|^2 + \sum_{k=0}^{T-1} \E \bCa{ \pa{ \tfrac{\eta (1+\delta)}{2 L \lambda} + \tfrac{B_2 \eta ( 1 - \rho_B ) \epsilon}{2} } \|\tnablak{k} - \nabla f(x_k) \|^2 \\
&\qquad + (1 + \delta) \pa{ \tfrac{\eta L ( \lambda + 1 )}{2} + \tfrac{B_2 \eta (1 - \rho_B)}{2 \epsilon ( 1 + \delta )} - \tfrac{1}{2} } \|x_{k+1} - x_k\|^2 } .
\end{aligned}
\eeqs
To bound the MSE, we use Lemma \ref{lem:msebound} with $\sigma_s = 1$. This leaves
\beqs
\begin{aligned}
\label{eq:finalterm}
& \eta \sum_{k=0}^{T-1} \E [ F(x_{k+1}) - F(\xsol) ] + \eta \delta \E [ F(x_T) - F(x_0) ] \\
&\le - \tfrac{1}{2} \E \ca{ \|x_T - \xsol\|^2 } + \tfrac{1}{2} \|x_0 - \xsol\|^2 
 + w \sum_{k=0}^{T-1} \E \ca{ \|x_{k+1} - x_k\|^2 } ,
\end{aligned}
\eeqs
where $w = { \frac{\eta L ( \lambda + 1 ) (1+\delta)}{2} + \frac{B_2 \eta (1 - \rho_B)}{2 \epsilon} + \frac{\Theta \eta L (1+\delta)}{\lambda} + B_2 \eta L^2 (1 - \rho_B) \epsilon \Theta - \frac{1+\delta}{2} }$. 
To minimize the coefficient of the final term, we set $\lambda = \sqrt{2 \Theta}$ and $\epsilon = (2 L^2 \Theta)^{-1/2}$. This coefficient is then equal to 
\beq
\sqrt{2 \Theta} \eta L (1 + \delta) + \tfrac{\eta L (1+\delta)}{2} + \sqrt{2} (1 - \rho_B) \eta L B_2 \sqrt{\Theta} - \tfrac{1 + \delta}{2},
\eeq
which is non-positive when $\eta 
\le \tfrac{1}{2 \sqrt{2 \Theta} L ( 1 + \frac{(1 - \rho_B) B_2}{1 + \delta} ) + L}$. This ensures that the final term in \eqref{eq:finalterm} is non-positive, so we can drop it from the inequality along with the term $-1/2 \E \|x_T - \xsol\|^2$. This leaves
\beq
\sum_{k=0}^{T-1} \E [ F(x_{k+1}) - F(\xsol) ] 
\le \tfrac{1}{2 \eta} \|x_0 - \xsol\|^2 + \delta \eta \E [F(x_0) - F(x_T)] .
\eeq
By the convexity of $F$ and the fact that $- F(x_T) \le - F(\xsol)$
\beq
\E \ca{ F(\xbar_{T}) - F(\xsol) }
\le \tfrac{1}{T} \sum_{k=0}^{T-1} \E [ F(x_{k+1}) - F(\xsol) ] 
\le \tfrac{1}{2 \eta T} \|x_0 - \xsol\|^2 + \tfrac{\delta \eta}{T} \pa{ F(x_0) - F(\xsol) }.
\eeq
Choosing $\delta = \max \{ (1 - \rho_B) B_2 - 1, 0\}$ approximately minimizes the right side of this inequality, completing the proof.
\end{prooftext}

\begin{prooftext}{Proof of Theorem \ref{thm:rate_recurs} (Strongly Convex Case)}
We begin with inequality \eqref{eq:delta}, but without setting $\mu = 0$.
\beqs
\begin{aligned}
& \eta (1 + \delta) \Ek [ F(x_{k+1}) - F(\xsol) ] + \tfrac{1 + \mu \eta}{2} \Ek \|x_{k+1} - \xsol\|^2 \\
&\le \eta \delta \pa{ F(x_k) - F(\xsol) } + \tfrac{1}{2} \|x_k - \xsol\|^2 + \Ek \bCa{ \tfrac{\eta (1+\delta)}{2 L \lambda} \|\tnablak{k} - \nabla f(x_k) \|^2 \\
& \qquad + (1 + \delta) \pa{ \tfrac{\eta L ( \lambda + 1 )}{2} - \tfrac{1}{2} } \|x_{k+1} - x_k\|^2 + \eta (1-\rho_B) \langle \nabla f(x_{k-1}) - \tnabla_{k-1}, x_k - \xsol \rangle } .
\end{aligned}
\eeqs
Applying the full expectation operator, multiplying by $(1 + \mu \eta)^k$, and summing over the epoch $k=m s$ to $k = m (s+1)-1$ for some $s \in \mathbb{N}_0$, we have
\beqs
\begin{aligned}
& \eta (1+\delta) \sum_{k=ms}^{m (s+1)-1} (1 + \mu \eta)^k \E [ F(x_{k+1}) - F(\xsol) ] + \tfrac{(1 + \mu \eta)^{m (s + 1)}}{2} \E \ca{ \|x_{m (s+1)} - \xsol\|^2 } \\
&\le \eta \delta \sum_{k=ms}^{m (s+1)-1} (1 + \mu \eta)^k \E [ F(x_{k}) - F(\xsol) ] + \tfrac{(1 + \mu \eta )^{m s}}{2} \E \ca{ \|x_{m s} - \xsol\|^2 } \\
&\qquad + \sum_{k = m s}^{m (s+1)-1} (1 + \mu \eta)^k \E \bCa{ \tfrac{\eta (1+\delta)}{2 L \lambda} \|\tnablak{k} - \nabla f(x_k) \|^2 + (1+\delta) \pa{ \tfrac{\eta L ( \lambda + 1 )}{2} - \tfrac{1}{2} } \|x_{k+1} - x_k\|^2 \\
&\qquad + \eta (1 - \rho_B) \langle \nabla f(x_{k-1}) - \tnabla_{k-1}, x_k - \xsol \rangle } .
\end{aligned}
\eeqs
We would like to bound the inner-product bias term using Lemma \ref{lem:bias}, and we can do this after some manipulation. Because $\eta \le \frac{1}{\mu m}$, we have $(1 + \mu \eta)^k \le 3 (1 + \mu \eta)^{m s}$). Using the same estimate as in equation \eqref{eq:e}, we can say
\beq
\begin{aligned}
&\sum_{k = m s}^{m (s+1)-1} (1 + \mu \eta)^k \E \ca{ \langle \nabla f(x_{k-1}) - \tnabla_{k-1}, x_k - \xsol \rangle } \\
&\le 3 (1 + \mu \eta)^{m s} \sum_{k = m s}^{m (s+1)-1} | \E \ca{ \langle \nabla f(x_{k-1}) - \tnabla_{k-1}, x_k - \xsol \rangle } |,
\end{aligned}
\eeq
We can also choose $\delta$ so that $1 + \delta \ge (1 + \mu \eta) \delta$. These simplifications lead to the inequality
\beqs
\begin{aligned}
& (1 + \mu \eta)^{m (s+1)} \E [ \delta \eta (F(x_{m (s+1)}) - F(\xsol)) + \tfrac{1}{2} \|x_{m (s+1)} - \xsol\|^2 ] \\
&\le \delta \eta (1 + \mu \eta)^{m s} \E [ F(x_{m s}) - F(\xsol) ] + \tfrac{(1 + \mu \eta)^{m s}}{2} \E \|x_{m s} - \xsol\|^2 \\
&\qquad + (1 + \mu \eta)^{m s} \Bigg( \sum_{k = m s}^{m (s+1)-1} \E \bCa{ \tfrac{3 \eta (1+\delta)}{2 L \lambda} \|\tnablak{k} - \nabla f(x_k) \|^2 + (1+\delta) \pa{ \tfrac{3 \eta L ( \lambda + 1 )}{2} - \tfrac{1}{2} } \|x_{k+1} - x_k\|^2 } \\&\qquad + 3 \eta (1-\rho_B) | \mathbb{E} \langle \nabla f(x_{k-1}) - \tnabla_{k-1}, x_k - \xsol \rangle | \Bigg) .
\end{aligned}
\eeqs
Summing this inequality from $s = 0$ to $s = S - 1$,
\beqs
\begin{aligned}
& (1 + \mu \eta)^{m S} \E [ \delta \eta (F(x_{m S}) - F(\xsol)) + \tfrac{1}{2} \|x_{m S} - \xsol\|^2 ] \\
&\le \delta \eta ( F(x_0) - F(\xsol) ) + \tfrac{1}{2} \|x_0 - \xsol\|^2 \\
&\qquad + \sum_{s = 0}^{S - 1} (1 + \mu \eta)^{m s} \Bigg( \sum_{k = m s}^{m (s+1)-1} \E \bCa{ \tfrac{3 \eta (1+\delta)}{2 L \lambda} \|\tnablak{k} - \nabla f(x_k) \|^2 + (1+\delta) \pa{ \tfrac{3 \eta L ( \lambda + 1 )}{2} - \tfrac{1}{2} } \|x_{k+1} - x_k\|^2 } \\&\qquad + 3 \eta (1-\rho_B) | \mathbb{E} \langle \nabla f(x_{k-1}) - \tnabla_{k-1}, x_k - \xsol \rangle | \Bigg) .
\end{aligned}
\eeqs
We use Lemma \ref{lem:msebound} with $\sigma_s = (1 + \mu \eta)^{m s}$ to bound the MSE and Lemma \ref{lem:bias} to bound the inner-product bias term.
\beqs
\label{eq:temp}
\begin{aligned}
& (1 + \mu \eta)^{m S} \E [ \delta \eta (F(x_{m S}) - F(\xsol)) + \tfrac{1}{2} \|x_{m S} - \xsol\|^2 ] \\
&\le \delta \eta \pa{ F(x_0) - F(\xsol) } + \tfrac{1}{2} \|x_0 - \xsol\|^2 + w \sum_{s = 0}^{S-1} \sum_{k=m s}^{m (s+1)-1} (1 + \mu \eta)^{m s} \E \|x_{k+1} - x_k\|^2 ,
\end{aligned}
\eeqs
where $w = { \frac{3 \eta L ( \lambda + 1 ) (1+\delta)}{2} + \frac{3 B_2 \eta (1 - \rho_B)}{2 \epsilon} + \frac{3 \Theta \eta L (1+\delta)}{\lambda} + 3 B_2 \eta L^2 (1 - \rho_B) \epsilon \Theta - \frac{1+\delta}{2} }$. 
To minimize the coefficient of the final term, we set $\lambda = \sqrt{2 \Theta}$ and $\epsilon = (2 L^2 \Theta)^{-1/2}$. This coefficient is then equal to
\beq
3 \sqrt{2 \Theta} \eta L (1 + \delta) + \tfrac{3 \eta L (1+\delta)}{2} + 3 \sqrt{2} (1 - \rho_B) \eta L B_2 \sqrt{\Theta} - \tfrac{1 + \delta}{2}.
\eeq
With
\beq
\eta 
\le \tfrac{1}{6 \sqrt{2 \Theta} L ( 1 + \frac{(1 - \rho_B) B_2}{1 + \delta} ) + L}
\eeq
this term is non-positive. Setting $\delta = \max\{ (1 - \rho_B) B_2 - 1, 0\}$, we are assured that
\beq
\eta 
\le \tfrac{1}{3 L (1 + 4 \sqrt{2 \Theta})} 
\le \tfrac{1}{6 \sqrt{2 \Theta} L ( 1 + \frac{(1 - \rho_B) B_2}{1 + \delta} ) + L},
\eeq
so the final term in \eqref{eq:temp} is non-positive, and we can drop it from the inequality. The resulting inequality is
\beqs\label{eq:recursstrcon}
\begin{aligned}
(1 + \mu \eta)^T \E [ \delta \eta (F(x_T) - F(\xsol)) + \tfrac{1}{2} \|x_T - \xsol\|^2 ] \le \delta \eta ( F(x_0) - F(\xsol) ) + \tfrac{1}{2} \|x_0 - \xsol\|^2.
\end{aligned}
\eeqs
All that remains is to show that our choice for $\delta$ satisfies $(1 + \delta) \ge (1 + \mu \eta) \delta$. Using the fact that
\beq
\eta 
\le \tfrac{1}{(1 - \rho_B) B_2 \mu},
\eeq
we can say
\begin{equation}
    \tfrac{1}{\mu \eta} \ge (1 - \rho_B) B_2 \ge \delta.
\end{equation}
This ensures that $(1 + \delta) \ge (1 + \mu \eta) \delta$ and concludes the proof.
\end{prooftext}

\section{Proof of Theorem \ref{thm:rate_ncvx}}\label{sec:proof_ncvx}

Theorem \ref{thm:rate_ncvx} follows immediately from inequality \eqref{eq:saganoncon} and the MSE bound of Lemma \ref{lem:msebound}.

\begin{prooftext}{Proof of Theorem \ref{thm:rate_ncvx}}

Summing inequality \eqref{eq:saganoncon} from $k=0$ to $k=T-1$ and applying the full expectation operator, we obtain
\beqs
\begin{aligned}
&0 \le - \E \ca{ F(x_T) } + F(x_0) + \pa{ L - \tfrac{1}{4 \eta} } \sum_{k=0}^{T-1} \E \ca{ \|\hat{x}_{k+1} - x_k\|^2 } \\&\qquad + \pa{ \tfrac{L}{2} - \tfrac{1}{4 \eta} } \sum_{k=0}^{T-1} \E \ca{ \|x_{k+1} - x_k\|^2 } + 2 \eta \sum_{k=0}^{T-1} \E \ca{ \| \nabla f(x_k) - \tnablak{k} \|^2 } .
\end{aligned}
\eeqs
We bound the MSE using Lemma \ref{lem:msebound} with $\sigma_s = 1$.
\beqs
\begin{aligned}
0 \le - \E \ca{F(x_T)} + F(x_0) + \pa{ L - \tfrac{1}{4 \eta} } \sum_{k=0}^{T-1} \E \|\hat{x}_{k+1} - x_k\|^2 + \pa{ \tfrac{L}{2} + 4 \Theta \eta L^2 - \tfrac{1}{4 \eta} } \sum_{k=0}^{T-1} \E \|x_{k+1} - x_k\|^2.
\end{aligned}
\eeqs
With $\eta \le \frac{\sqrt{16 \Theta + 1} - 1}{16 L \Theta}$, the final term is non-positive, so we can drop it from the inequality. Using the fact that $-F(x_T) \le -F(\xsol)$, our inequality simplifies to
\beq
- \pa{ L - \tfrac{1}{4 \eta} } \sum_{k=0}^{T-1} \E \ca{ \|\hat{x}_{k+1} - x_k\|^2 } 
\le F(x_0) - F(\xsol).
\eeq
Writing the left side in terms of the generalized gradient, we have the bound
\beqs
\begin{aligned}
\sum_{k=0}^{T-1} \E \ca{ \|\mathcal{G}_{\eta / 2}(x_k)\|^2 }
\le \sfrac{16 ( F(x_0) - F(\xsol) )}{\eta ( 1 - 4 \eta L )} .
\end{aligned}
\eeqs
With $x_{\alpha}$ chosen uniformly at random from the set $\{ x_k \}_{k=0}^{T-1}$, this is equivalent to
\beq
\E \ca{ \|\mathcal{G}_{\eta / 2}(x_\alpha)\|^2 }
\le \sfrac{16 ( F(x_0) - F(\xsol) )}{\eta ( 1 - 4 \eta L ) T} .
\eeq
This completes the proof.
\end{prooftext}

\section{Proofs of convergence rates for B-SAGA and B-SVRG}\label{sec:saga}

The following lemma establishes an MSE bound on the B-SAGA and B-SVRG gradient estimators. For the unbiased case $\theta = 1$, this result was essentially first proved in \citep{SAGA}, but the authors ultimately use a looser variance bound.

\begin{lemma}\label{lem:varsaga}
The MSE's of the B-SAGA and B-SVRG gradient estimators satisfy
\beqs\label{eq:varsaga}
\begin{aligned}
\Ek \ca{ \|\tnablak{k} - \nabla f(x_k) \|^2 } 
\le \sfrac{1}{n \theta^2} \sum_{i=1}^n \|\nabla f_i(x_k) - \nabla f_i(\varphi_k^i) \|^2 + \pa{ 1 - \tfrac{2}{\theta} } \| \nabla f(x_k) - \tfrac{1}{n} \msum_{i=1}^n \nabla f_i(\varphi_k^i) \|^2 .
\end{aligned}
\eeqs
\end{lemma}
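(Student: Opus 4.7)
\textbf{Proof proposal for Lemma \ref{lem:varsaga}.} The plan is to carry out a single variance-bias decomposition that covers both B-SAGA and B-SVRG simultaneously, since the only structural difference between the two is that the memory points $\varphi_k^i$ are updated every iteration in B-SAGA but held constant across an epoch in B-SVRG; this distinction plays no role in the one-step MSE computation. Write $Y_k \defeq \nabla f_{j_k}(x_k) - \nabla f_{j_k}(\varphi_k^{j_k})$ and $\bar{Y}_k \defeq \Ek[Y_k] = \nabla f(x_k) - \tfrac{1}{n}\sum_{i=1}^n \nabla f_i(\varphi_k^i)$, so that $\tnablak{k} = \tfrac{1}{\theta} Y_k + \tfrac{1}{n}\sum_{i=1}^n \nabla f_i(\varphi_k^i)$ and hence $\tnablak{k} - \nabla f(x_k) = \tfrac{1}{\theta}(Y_k - \bar{Y}_k) - (1 - \tfrac{1}{\theta})\bar{Y}_k$.

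The first step is to exploit the fact that $\bar{Y}_k$ is deterministic given the history up to iteration $k$, while $Y_k - \bar{Y}_k$ has zero conditional mean. Taking the squared norm and applying $\Ek$ then kills the cross term, giving
\begin{equation}
\Ek \ca{ \|\tnablak{k} - \nabla f(x_k)\|^2 } = \tfrac{1}{\theta^2} \Ek \ca{ \|Y_k - \bar{Y}_k\|^2 } + \pa{1 - \tfrac{1}{\theta}}^2 \|\bar{Y}_k\|^2.
\end{equation}
The second step is the standard variance identity $\Ek\ca{\|Y_k - \bar{Y}_k\|^2} = \Ek\ca{\|Y_k\|^2} - \|\bar{Y}_k\|^2$, combined with the observation that, because $j_k$ is uniform on $\{1,\dots,n\}$, $\Ek\ca{\|Y_k\|^2} = \tfrac{1}{n}\sum_{i=1}^n \|\nabla f_i(x_k) - \nabla f_i(\varphi_k^i)\|^2$.

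The third and final step is the elementary algebraic simplification of the coefficient of $\|\bar{Y}_k\|^2$: we have $(1 - \tfrac{1}{\theta})^2 - \tfrac{1}{\theta^2} = 1 - \tfrac{2}{\theta}$, which produces exactly the claimed bound \eqref{eq:varsaga}. I do not anticipate a genuine obstacle; the only thing to be careful about is making the decomposition of $\tnablak{k} - \nabla f(x_k)$ into the zero-mean piece plus the deterministic bias piece cleanly, so that the cross term vanishes under $\Ek$ and the coefficient bookkeeping produces the precise factor $1 - 2/\theta$ (rather than $(1 - 1/\theta)^2$, which would be a weaker bound). For B-SVRG, the identical argument applies verbatim with $\varphi_k^i = \varphi_s$ for every $i$ throughout epoch $s$.
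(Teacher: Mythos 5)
Your proof is correct and follows essentially the same route as the paper: both isolate the uniform-sampling identity $\Ek[Y_k]=\bar Y_k$ (and its consequence for $\Ek\|Y_k\|^2$) and then do the elementary algebra on the cross term, arriving at the same equality with coefficient $1-2/\theta$. The only cosmetic difference is that you reorganize $\tfrac{1}{\theta}Y_k-\bar Y_k$ into a zero-mean piece plus a deterministic bias before invoking the variance identity, whereas the paper expands the square directly and simplifies the inner product via $\Ek\langle Y_k,\bar Y_k\rangle=\|\bar Y_k\|^2$; the underlying computation is identical.
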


\begin{proof}
Let $\tnablak{k} \equiv \gradBSAGA_{k}$ or $\gradBSVRG_{k}$. The proof amounts to computing the expectation of the estimator and applying the Lipschitz continuity of $\nabla f_i$.
\beqs
\begin{aligned}
\Ek \ca{ \|\tnablak{k} - \nabla f(x_k) \|^2 } 
&= \Ek \ca{ \| \tfrac{1}{\theta} (\nabla f_{j_k}(x_k) - \nabla f_{j_k}(\varphi_k^{j_k})) - \nabla f(x_k) - \tfrac{1}{n} \msum_{i=1}^n \nabla f_i(\varphi_k^i) \|^2 } \\
&= \sfrac{1}{\theta^2} \Ek \ca{ \|\nabla f_{j_k}(x_k) - \nabla f_{j_k}(\varphi_k^{j_k}) \|^2 } + \| \nabla f(x_k) - \tfrac{1}{n} \msum_{i=1}^n \nabla f_i(\varphi_k^i) \|^2 \\
& \qquad - \sfrac{2}{\theta} \Ek \ca{ \langle \nabla f_{j_k}(x_k) - \nabla f_{j_k}(\varphi_k^{j_k}), \nabla f(x_k) - \tfrac{1}{n} \msum_{i=1}^n \nabla f_i(\varphi_k^i) \rangle } \\
&= \sfrac{1}{n \theta^2} \sum_{i=1}^n \|\nabla f_i(x_k) - \nabla f_i(\varphi_k^i) \|^2 + \pa{ 1 - \tfrac{2}{\theta} } \| \nabla f(x_k) - \sfrac{1}{n} \msum_{i=1}^n \nabla f_i(\varphi_k^i) \|^2,
\end{aligned}
\eeqs
which is the desired result. 
\end{proof}

The following two lemmas establish the constants in the BMSE property for the B-SAGA and B-SVRG estimators.

\begin{prooftext}{Proof of Lemma \ref{lem:sagamse}}
We begin with the inequality of Lemma \ref{lem:varsaga} and consider two cases.

\paragraph{Case 1.} Suppose $\theta \in [1,2]$. In this case the second term in \eqref{eq:varsaga} is non-positive, so we drop it from the inequality. For the remaining term, we use the following bound.
\beqs\label{eq:case1}
\begin{aligned}
&\tfrac{1}{n} \sum_{i=1}^n \E \ca{ \| \nabla f_i(x_k) - \nabla f_i(\varphi_k^i)\|^2 } \\
&\symnum{1}{\le} \tfrac{1+2 n}{n} \sum_{i=1}^n \E \ca{ \|\nabla f_i(x_k) - \nabla f_i(x_{k-1})\|^2 } + \tfrac{1}{n} \pa{1+\tfrac{1}{2 n}} \sum_{i=1}^n \E \ca{ \|\nabla f_i(x_{k-1}) - \nabla f_i(\varphi_k^i)\|^2 } \\
&\symnum{2}{=} \tfrac{1+2 n}{n} \sum_{i=1}^n \E \ca{ \|\nabla f_i(x_k) - \nabla f_i(x_{k-1})\|^2 } + \tfrac{1}{n} \pa{1+\tfrac{1}{2 n}} \pa{1 - \tfrac{1}{n}} \sum_{i=1}^n \E \ca{ \|\nabla f_i(x_{k-1}) - \nabla f_i(\varphi_{k-1}^i)\|^2 } \\
&\symnum{3}{\le} \tfrac{1+2 n}{n} \sum_{i=1}^n \E \ca{ \|\nabla f_i(x_k) - \nabla f_i(x_{k-1})\|^2 } + \tfrac{1}{n} \pa{1-\tfrac{1}{2 n}} \sum_{i=1}^n \E \ca{ \|\nabla f_i(x_{k-1}) - \nabla f_i(\varphi_{k-1}^i)\|^2 } .
\end{aligned}
\eeqs
Inequality \numcirc{1} is the standard inequality $\|a-c\|^2 \le (1+\delta) \|a-b\|^2 + (1+\delta^{-1}) \|b - c\|^2$ (where we let $\delta = \frac{1}{2n}$). Inequality \numcirc{2} follows from the definition of $\varphi_k^i$ and computing the expectation over $j_{k-1}$, and \numcirc{3} uses the fact that $(1+\frac{1}{2 n}) \pa{1 - \tfrac{1}{n}} \le (1 - \frac{1}{2 n})$. Altogether, this gives
\beqs
\begin{aligned}
&\E \ca{ \|\tnablak{k}^{\textnormal{\tiny SAGA}} - \nabla f(x_k) \|^2 } \\
&\le \tfrac{1}{n \theta^2} \sum_{i=1}^n \E \ca{ \| \nabla f_i(x_k) - \nabla f_i(\varphi_k^i) \| } \\
&\le \tfrac{2 n + 1}{n \theta^2} \sum_{i=1}^n \E \ca{ \|\nabla f_i(x_k) - \nabla f_i(x_{k-1}) \|^2 } + \tfrac{1}{n \theta^2} \pa{ 1 - \tfrac{1}{2 n} } \sum_{i=1}^n \E \ca{ \|\nabla f_i(x_{k-1}) - \nabla f_i(\varphi_{k-1}^i)\|^2 } .
\end{aligned}
\eeqs
With $\calM_k = \frac{1}{n \theta^2} \sum_{i=1}^n \E \ca{ \|\nabla f_i(x_k) - \nabla f_i(\varphi_k^i)\|^2 }$, it is clear that the SAGA estimator satisfies the BMSE property with $M_1 = \frac{2n + 1}{\theta^2}, \rho_M = \frac{1}{2 n}, M_2 = 0, \rho_F = 1$, and $m = 1$.

\paragraph{Case 2.} Suppose $\theta > 2$, so that the second term in \eqref{eq:varsaga} is non-negative. Jensen's inequality gives
\beq
\begin{aligned}
\Ek \ca{ \|\tnablak{k} - \nabla f(x_k) \|^2 } 
\le \tfrac{1}{n} \pa{ 1 - \tfrac{1}{\theta} }^2 \sum_{i=1}^n \|\nabla f_i(x_k) - \nabla f_i(\varphi_k^i) \|^2.
\end{aligned}
\eeq
Following the argument of Case 1, it is easy to see that the B-SAGA gradient estimator satisfies the BMSE property with $\calM_k = \frac{1}{n} \pa{ 1 - \tfrac{1}{\theta} }^2 \sum_{i=1}^n \|\nabla f_i(x_k) - \nabla f_i(\varphi_k^i) \|^2$, $M_1 = (2 n + 1) \pa{ 1 - \tfrac{1}{\theta} }^2$, $\rho_M = \frac{1}{2 n}$, $M_2 = 0$, $\rho_F = 1$, and $m = 1$.

To prove that the B-SAGA estimator is memory-biased, we must only compute its expectation.
\beqs
\begin{aligned}
\nabla f(x_k) - \Ek \ca{ \gradBSAGA_{k} } 
& = \nabla f(x_k) - \tfrac{1}{\theta} \Ek [ \nabla f_{j_k}(x_k) - f_i(\varphi_k^{j_k}) ] - \tfrac{1}{n} \sum_{i=1}^n \nabla f_i(\varphi_k^i) \\
& = \pa{ 1 - \tfrac{1}{\theta} } \bPa{ \nabla f(x_k) - \tfrac{1}{n} \sum_{i=1}^n \nabla f_i(\varphi_k^i) } .
\end{aligned}
\eeqs
To compute a value for $B_1$, we follow \eqref{eq:case1} to obtain
\beqs
\begin{aligned}
\tfrac{1}{n} \sum_{i=1}^n \E \ca{ \|x_k - \varphi_k^i\|^2 } 
&\le (2 n + 1) \|x_k - x_{k-1}\|^2 + \tfrac{1}{n} \pa{ 1 - \tfrac{1}{2 n} } \sum_{i=1}^n \E \ca{ \|x_{k-1} - \varphi_{k-1}^i\|^2 } \\
&\le (2 n + 1) \sum_{\ell = 1}^k \pa{ 1 - \tfrac{1}{2 n} }^{k - \ell} \|x_\ell - x_{\ell-1}\|^2.
\end{aligned}
\eeqs
Summing this inequality from $k = 0$ to $k = T-1$, we obtain
\beqs
\begin{aligned}
\frac{1}{n} \sum_{k=0}^{T-1} \sum_{i=1}^n \E \ca{ \|x_k - \varphi_k^i\|^2 } &\le (2 n + 1) \sum_{k=0}^{T-1} \sum_{\ell = 1}^k (1 - \tfrac{1}{2 n})^{k - \ell} \|x_\ell - x_{\ell-1}\|^2 \\
&\le (2 n + 1) \Pa{ \msum_{\ell = 0}^\infty \pa{ 1 - \tfrac{1}{2 n} }^{\ell} } \sum_{k=0}^{T-1} \|x_{k+1} - x_k\|^2 \\
&= 2 n (2 n + 1) \sum_{k=0}^{T-1} \|x_{k+1} - x_k\|^2,
\end{aligned}
\eeqs
which completes the proof.
\end{prooftext}

\begin{prooftext}{Proof of Lemma \ref{lem:svrgmse}}
Suppose $k \in \{ m s, m s + 1, \cdots m (s+1) - 1 \}$ for some $s \in \mathbb{N}_0$. As in the proof of Lemma \ref{lem:sagamse}, we begin with the inequality of Lemma \ref{lem:varsaga} and consider two cases.

\paragraph{Case 1.} Suppose $\theta \in [1,2]$, so that we may drop the second term in \eqref{eq:varsaga}. We can bound the remaining term as follows.
\beqs
\begin{aligned}
\tfrac{1}{n \theta^2} \sum_{i=1}^n \|\nabla f_i (x_k) - \nabla f_i (\varphi_s) \|^2 
&\symnum{1}{\le} \tfrac{1+m}{n \theta^2} \sum_{i=1}^n \|\nabla f_i(x_k) - \nabla f_i(x_{k-1})\|^2 + \tfrac{1+1/m}{n \theta^2} \sum_{i=1}^n \|\nabla f_i(x_{k-1}) - \nabla f_i(\varphi_s)\|^2 \\
&\symnum{2}{\le} \tfrac{1+m}{n \theta^2} \sum_{\ell=m s}^k (1+\tfrac{1}{m})^{k - \ell} \sum_{i=1}^n \|\nabla f_i(x_{\ell+1}) - \nabla f_i(x_{\ell}) \|^2.
\end{aligned}
\eeqs
Inequality \numcirc{1} uses the inequality $\|u - w\|^2 \le (1+1/m) \|u - v\|^2 + (1+m)\|v - w\|^2$, and \numcirc{2} follows from the fact that $x_{m s} = \varphi_s$. Summing this inequality from $k = m s$ to $k = m(s+1)-1$ gives us
\beqs
\begin{aligned}
\sfrac{1}{n \theta^2} \sum_{k = m s}^{m (s+1)-1} \sum_{i=1}^n \|\nabla f_i(x_k) - \nabla f_i(\varphi_s) \|^2 
&\le \sfrac{m+1}{n \theta^2} \pa{ 1 + \tfrac{1}{m} }^m \sum_{k = m s}^{m (s+1)-1} \sum_{\ell=m s}^k \sum_{i=1}^n \|\nabla f_i(x_{\ell+1}) - \nabla f_i(x_{\ell})\|^2 \\
&\le \sfrac{m (m+1)}{n \theta^2} \pa{ 1 + \tfrac{1}{m} }^m \sum_{k = m s}^{m (s+1)-1} \sum_{i=1}^n \|\nabla f_i(x_{\ell+1}) - \nabla f_i(x_{\ell})\|^2 \\
&\le \sfrac{3 m (m + 1)}{n \theta^2} \sum_{k = m s}^{m (s + 1) - 1} \|\nabla f_i(x_{k+1}) - \nabla f_i(x_k)\|^2.
\end{aligned}
\eeqs
The final inequality uses the fact that $( 1 + \frac{1}{m} )^m < \lim_{m \to \infty} ( 1 + \frac{1}{m} )^m = e < 3$. From this inequality, it is clear that the B-SVRG gradient estimator satisfies the BMSE property with $M_1 = \frac{3 m (m+1)}{\theta^2}$, $\rho_M = 1$, $M_2 = 0$, and $\rho_F = 1$.

\paragraph{Case 2.} If $\theta > 2$, then applying Jensen's inequality to \eqref{eq:varsaga} produces
\beq
\Ek \ca{ \|\gradBSVRG_{k} - \nabla f(x_k) \|^2 }
\le \tfrac{1}{n} \pa{ 1 - \tfrac{1}{\theta} }^2 \sum_{i=1}^n \|\nabla f_i(x_k) - \nabla f_i(\varphi_s) \|^2.
\eeq
A similar argument to the one in Case 1 shows that $M_1 = 3 m (m+1) \pa{ 1 - \tfrac{1}{\theta} }^2$, $\rho_M = 1$, $M_2 = 0$, and $\rho_F = 1$.

All that is left is to prove the stated value for $B_1$. Following the proof in Case 1,
\beqs
\begin{aligned}
\sum_{k = m s}^{m (s + 1) - 1} \|x_k - \varphi_s\|^2 \le \sum_{k = m s}^{m (s + 1) - 1} \sum_{\ell = m s}^k (1+m) (1+\sfrac{1}{m})^m \|x_{\ell+1} - x_\ell\|^2 \le 3 m (m + 1) \sum_{k = m s}^{m (s + 1) - 1} \|x_{k + 1} - x_k\|^2.
\end{aligned}
\eeqs
Summing over the epochs $s = 0$ to $s = S$ shows $B_1 = 3 m (m + 1)$.
\end{prooftext}

Combining Lemmas \ref{lem:sagamse} and \ref{lem:svrgmse} with Theorems \ref{thm:rate_mem} and \ref{thm:rate_ncvx} proves convergence rates for B-SAGA and B-SVRG.

\section{Proof of convergence rates for SARAH}\label{sec:sarah}

Lemma \ref{lem:sarahmse} establishes the BMSE constants for the SARAH estimator. The convergence rates of Corollary \ref{thm:sarah} then follow immediately from Theorem \ref{thm:rate_recurs}.

\begin{prooftext}{Proof of Lemma \ref{lem:sarahmse}}
Let $k \in \{ m s+1,ms+2,\cdots,m(s+1) - 1 \}$. The claim follows immediately from the well-known bound on the MSE of the SARAH gradient estimator
\beq
\|\gradSARAH_k - \nabla f(x_k) \|^2 
\le \sfrac{1}{n} \sum_{\ell=m s}^k \sum_{i=1}^n \|\nabla f_i(x_{\ell+1}) - \nabla f_i(x_\ell)\|^2.
\eeq
We refer to \citep{spider}, for example, for a proof of this inequality. Summing over an epoch and applying the estimate
\beq\label{eq:sarahest}
\sfrac{1}{n} \sum_{k=m s}^{m(s+1)-1} \sum_{\ell=m s}^k \sum_{i=1}^n \|\nabla f_i(x_{\ell+1}) - \nabla f_i(x_\ell) \|^2
\le \sfrac{m}{n} \sum_{k=m s}^{m(s+1)-1} \sum_{i=1}^n \|\nabla f_i(x_{k+1}) - \nabla f_i(x_k)\|^2
\eeq
complete the proof.
\end{prooftext}

\section{Proof of convergence rates for SARGE}\label{sec:sarge}

For our analysis, we write the SARGE gradient estimator in terms of the SAGA estimator. Define the estimator
\beq
\tnabla^{\xi\textnormal{\tiny-SAGA}}_k \defeq \nabla f_{j_k}(x_{k-1}) - \nabla f_{j_k}(\xi^{j_k}_k) + \tfrac{1}{n} \sum_{i=1}^n \nabla f_i(\xi^i_k),
\eeq
where the variables $\{\xi_k^i\}_{i=1}^n$ follow the update rules $\xi_{k+1}^{j_k} = x_{k-1}$ and $\xi_{k+1}^i = \xi_k^i$ for all $i \not = j_k$. The SARGE estimator is equal to
\beq
\gradSARGE_{k} = \gradSAGA_{k} - \pa{1 - \tfrac{1}{n}} ( \tnabla^{\xi\textnormal{\tiny-SAGA}}_{k} - \gradSARGE_{k} ).
\eeq
Before we prove Lemma \ref{lem:sargemse}, we require a bound on the MSE of the $\xi$-SAGA gradient estimator that follows immediately from Lemma \ref{lem:varsaga}.

\begin{lemma}
\label{lem:sagaforsarge}
The MSE of the $\xi$-SAGA gradient estimator satisfies the following bound:
\beq
\E \ca{ \|\tnabla^{\xi\textnormal{\tiny-SAGA}}_k - \nabla f(x_{k-1}) \|^2 } 
\le 3 \sum_{\ell = 1}^{k - 1} (1 - \tfrac{1}{2 n} )^{k - \ell - 1} \sum_{i=1}^n \E \ca{ \|\nabla f_i(x_{\ell}) - \nabla f_i(x_{\ell-1}) \|^2 } . 
\eeq
\end{lemma}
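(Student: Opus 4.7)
The plan is to treat $\tnabla^{\xi\textnormal{\tiny-SAGA}}_k$ as an instance of the unbiased SAGA-type estimator already handled by Lemma \ref{lem:varsaga}, but with the ``current point'' identified as $x_{k-1}$ rather than $x_k$ and the stored reference points taken to be the $\xi_k^i$ (each of which equals some earlier iterate of the form $x_{\ell-1}$). Under this identification the estimator corresponds to the parameter $\theta = 1$, so the coefficient $1 - 2/\theta = -1$ in front of the second term in Lemma \ref{lem:varsaga} is non-positive and may simply be discarded, yielding the conditional MSE bound
$$\Ek\Ca{\|\tnabla^{\xi\textnormal{\tiny-SAGA}}_k - \nabla f(x_{k-1})\|^2} \le \tfrac{1}{n} \sum_{i=1}^n \|\nabla f_i(x_{k-1}) - \nabla f_i(\xi_k^i)\|^2.$$

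Next I would set up a one-step recursion for the right-hand side, paralleling equation \eqref{eq:case1} in the proof of Lemma \ref{lem:sagamse}. Young's inequality with parameter $1/(2n)$ yields
$$\sum_{i=1}^n \|\nabla f_i(x_{k-1}) - \nabla f_i(\xi_k^i)\|^2 \le (1+2n) \sum_{i=1}^n \|\nabla f_i(x_{k-1}) - \nabla f_i(x_{k-2})\|^2 + \pa{1+\tfrac{1}{2n}} \sum_{i=1}^n \|\nabla f_i(x_{k-2}) - \nabla f_i(\xi_k^i)\|^2.$$
Taking the full expectation and using the update rule $\xi_{k+1}^{j_k} = x_{k-1}$ together with the uniform sampling of $j_{k-1}$, the second sum satisfies
$$\E \sum_{i=1}^n \|\nabla f_i(x_{k-2}) - \nabla f_i(\xi_k^i)\|^2 = \pa{1-\tfrac{1}{n}} \E \sum_{i=1}^n \|\nabla f_i(x_{k-2}) - \nabla f_i(\xi_{k-1}^i)\|^2,$$
since $\xi_k^i = x_{k-2}$ (for which the squared term vanishes) with probability $1/n$ and $\xi_k^i = \xi_{k-1}^i$ otherwise.

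Combining these two estimates and using the clean bound $\pa{1+\tfrac{1}{2n}}\pa{1-\tfrac{1}{n}} \le 1 - \tfrac{1}{2n}$ produces a geometric recursion of the shape
$$a_k \le (1+2n)\, b_{k-1} + \pa{1-\tfrac{1}{2n}}\, a_{k-1},$$
where $a_k \defeq \E \sum_{i=1}^n \|\nabla f_i(x_{k-1}) - \nabla f_i(\xi_k^i)\|^2$ and $b_k \defeq \E \sum_{i=1}^n \|\nabla f_i(x_k) - \nabla f_i(x_{k-1})\|^2$. Unrolling this recursion and observing that $(1+2n)/n \le 3$ for all $n \ge 1$ delivers precisely the claimed bound. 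I do not foresee any real technical obstacle: the only subtlety is a one-step shift of time indices caused by the current point being $x_{k-1}$ instead of $x_k$, and the rest is the same telescoping bookkeeping already used in Case 1 of Lemma \ref{lem:sagamse}.
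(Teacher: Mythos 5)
Your proposal is correct and follows essentially the same route as the paper: the conditional MSE bound via the standard variance decomposition (equivalently, Lemma \ref{lem:varsaga} applied at $x_{k-1}$ with $\theta=1$), Young's inequality with parameter $1/(2n)$, the $(1-\tfrac{1}{n})$ factor from averaging over $j_{k-1}$, the simplification $(1+\tfrac{1}{2n})(1-\tfrac{1}{n}) \le 1-\tfrac{1}{2n}$, and then unrolling the geometric recursion with $(1+2n)/n \le 3$. The paper rederives the variance decomposition inline rather than citing Lemma \ref{lem:varsaga} with shifted arguments, but the content is identical.
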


\begin{proof}
Following the proof of Lemma \ref{lem:varsaga},
\beqs
\begin{aligned}
\Ek \ca{ \|\tnabla^{\xi\textnormal{\tiny-SAGA}}_k - \nabla f(x_{k-1}) \|^2 }
&= \Ek \ca{ \|\nabla f_{j_k}(x_{k-1}) - \nabla f_{j_k}(\xi_k^{j_k}) - \nabla f(x_{k-1}) + \tfrac{1}{n} \msum_{i=1}^n \nabla f_i(\xi_k^i) \|^2 } \\
&\symnum{1}{=} \frac{1}{n} \sum_{i=1}^n \|\nabla f_i(x_{k-1}) - \nabla f_i(\xi_k^i) \|^2 - \| \nabla f(x_{k-1}) - \tfrac{1}{n} \msum_{i=1}^n \nabla f_i(\xi_k^i) \|^2 \\
&\le \frac{1}{n} \sum_{i=1}^n \|\nabla f_i(x_{k-1}) - \nabla f_i(\xi_k^i) \|^2.
\end{aligned}
\eeqs
Equality \numcirc{1} is the standard variance decomposition. To continue, we follow the proof of Lemma \ref{lem:varsaga}.
\beqs
\begin{aligned}
&\E \ca{ \|\tnabla^{\xi\textnormal{\tiny-SAGA}}_k - \nabla f(x_{k-1}) \|^2 } \\
&\le \tfrac{1}{n} \sum_{i=1}^n \E \ca{ \|\nabla f_i(x_{k-1}) - \nabla f_i(\xi_k^i) \|^2 } \\
&\le \tfrac{1 + 2 n}{n} \sum_{i=1}^n \E \ca{ \|\nabla f_i(x_{k-1}) - \nabla f_i(x_{k-2}) \|^2 } + \tfrac{1}{n} ( 1 + \tfrac{1}{2 n} ) \sum_{i=1}^n \E \ca{ \| \nabla f_i(x_{k-2}) - \nabla f_i(\xi_k^i) \|^2 } \\
&\symnum{2}{=} \tfrac{(1 + 2 n)}{n} \sum_{i=1}^n \E \ca{ \|\nabla f_i(x_{k-1}) - \nabla f_i(x_{k-2}) \|^2 } + \tfrac{1}{n} ( 1 + \tfrac{1}{2 n} ) \pa{1 - \tfrac{1}{n}} \sum_{i=1}^n \E \ca{ \| \nabla f_i(x_{k-2}) - \nabla f_i(\xi_{k-1}^i) \|^2 } \\
&\symnum{3}{\le} 3 \sum_{i=1}^n \E \ca{ \|\nabla f_i(x_{k-1}) - \nabla f_i(x_{k-2}) \|^2 } + \tfrac{1}{n} ( 1 - \tfrac{1}{2 n} ) \sum_{i=1}^n \E \ca{ \| \nabla f_i(x_{k-2}) - \nabla f_i(\xi_{k-1}^i) \|^2 } \\
&\le 3 \sum_{\ell = 1}^{k-1} (1 - \tfrac{1}{2 n} )^{k - \ell - 1} \sum_{i=1}^n \E \ca{ \|\nabla f_i(x_{\ell}) - \nabla f_i(x_{\ell-1}) \|^2 } .
\end{aligned}
\eeqs
Equality \numcirc{2} follows from computing expectations, and \numcirc{3} uses the estimate $\pa{1 - \tfrac{1}{n}}(1+\frac{1}{2 n}) \le (1-\frac{1}{2 n})$.
\end{proof}

Due to the recursive nature of the SARGE gradient estimator, its MSE depends on the difference between the current estimate and the estimate from the previous iteration. The next lemma provides a bound on $\|\gradSARGE_k - \gradSARGE_{k-1}\|^2$.

\begin{lemma}
\label{lem:vbound}
The SARGE gradient estimator satisfies the following bound:
\beqs
\begin{aligned}
\E \ca{ \| \gradSARGE_k - \gradSARGE_{k-1}\|^2 } 
&\le \tfrac{12}{n} \sum_{i=1}^n \E \ca{ \|\nabla f_i(x_k) - \nabla f_i(x_{k-1}) \|^2 } + \tfrac{3}{2 n^2} \E \|\nabla f(x_{k-1}) - \gradSARGE_{k-1} \|^2 \\&\qquad + \tfrac{39}{n^2} \sum_{\ell=1}^k ( 1 - \tfrac{1}{2 n} )^{k - \ell} \sum_{i=1}^n \E \|\nabla f_i(x_{\ell}) - \nabla f_i(x_{\ell - 1}) \|^2 .
\end{aligned}
\eeqs
\end{lemma}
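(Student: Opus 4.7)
Starting from the decomposition $\gradSARGE_k = \gradSAGA_k - (1-\tfrac{1}{n})(\tnabla^{\xi\textnormal{\tiny-SAGA}}_k - \gradSARGE_{k-1})$ stated just above Lemma \ref{lem:sagaforsarge}, I would subtract $\gradSARGE_{k-1}$ from both sides and insert $\pm\nabla f(x_k)$ together with $\pm(1-\tfrac{1}{n})\nabla f(x_{k-1})$ into the right-hand side to obtain
\begin{equation*}
\gradSARGE_k - \gradSARGE_{k-1} = \Delta_k + [\nabla f(x_k) - \nabla f(x_{k-1})] + \tfrac{1}{n}[\nabla f(x_{k-1}) - \gradSARGE_{k-1}],
\end{equation*}
where $\Delta_k \defeq [\gradSAGA_k - \nabla f(x_k)] - (1-\tfrac{1}{n})[\tnabla^{\xi\textnormal{\tiny-SAGA}}_k - \nabla f(x_{k-1})]$.

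First I would apply Young's inequality $\|a+b\|^2 \le (1+\lambda)\|a\|^2 + (1+\tfrac{1}{\lambda})\|b\|^2$ with $\lambda = 2$, grouping the recursive bias residual $\tfrac{1}{n}[\nabla f(x_{k-1}) - \gradSARGE_{k-1}]$ as the $b$-term; this isolates the target coefficient $\tfrac{3}{2n^2}$ in front of $\|\nabla f(x_{k-1}) - \gradSARGE_{k-1}\|^2$ and leaves a prefactor of $3$ on the squared norm of the remaining three summands. A further application of $\|y+z\|^2 \le 2\|y\|^2 + 2\|z\|^2$ separates the Lipschitz difference $\nabla f(x_k) - \nabla f(x_{k-1})$ from $\Delta_k$; Jensen's inequality then converts the former into $\tfrac{6}{n}\sum_i\|\nabla f_i(x_k) - \nabla f_i(x_{k-1})\|^2$, contributing toward the target $\tfrac{12}{n}$ coefficient.

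The substantive step is bounding $\E\|\Delta_k\|^2$. The critical observation is that $\gradSAGA_k$ and $\tnabla^{\xi\textnormal{\tiny-SAGA}}_k$ share the sampled index $j_k$ and their tables are synchronised: $\varphi_k^i = x_{k'}$ and $\xi_k^i = x_{k'-1}$, where $k'$ is the iteration at which coordinate $i$ was most recently sampled. Consequently the centred per-index residual inside $\Delta_k$ collapses to the telescoping identity
\begin{equation*}
[\nabla f_i(x_k) - \nabla f_i(x_{k'})] - (1-\tfrac{1}{n})[\nabla f_i(x_{k-1}) - \nabla f_i(x_{k'-1})] = d_i^k + \tfrac{1}{n}\sum_{\ell=k'+1}^{k-1} d_i^\ell - (1-\tfrac{1}{n})d_i^{k'},
\end{equation*}
with $d_i^\ell \defeq \nabla f_i(x_\ell) - \nabla f_i(x_{\ell-1})$. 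The $\tfrac{1}{n}$ factor in front of the middle telescoping sum is precisely what produces the $\tfrac{1}{n^2}$ scaling in the statement; a naive separate application of Lemmas \ref{lem:varsaga} and \ref{lem:sagaforsarge} to the two summands of $\Delta_k$ would yield only $O(1)$ constants and miss this saving. Squaring the identity via $\|x+y+z\|^2 \le 3(\|x\|^2+\|y\|^2+\|z\|^2)$, invoking Cauchy--Schwarz on the middle sum (picking up a length factor $k-k'-1$), averaging over $j_k$, and summing over $k'$ against the geometric survival probability $(1-\tfrac{1}{n})^{k-k'}$ that coordinate $i$ is not resampled produces the historical sum $\sum_\ell(1-\tfrac{1}{2n})^{k-\ell}\sum_i\|d_i^\ell\|^2$ weighted by $\tfrac{39}{n^2}$, together with a further $\tfrac{6}{n}\sum_i\|d_i^k\|^2$ contribution originating from the $d_i^k$ piece; adding this to the earlier $\tfrac{6}{n}$ contribution produced by Jensen's consolidates into the stated $\tfrac{12}{n}$.

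The main obstacle is the combinatorial bookkeeping: the Young's weight, the tripartite split, the Cauchy--Schwarz length factor, and the geometric summation over the return time $k-k'$ must conspire to produce exactly the constants $\tfrac{12}{n}$, $\tfrac{3}{2n^2}$, $\tfrac{39}{n^2}$. Conceptually, however, the single nontrivial move is to treat $\Delta_k$ through the combined telescoping identity above rather than applying the two variance bounds independently.
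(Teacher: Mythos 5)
Your key observation, that the $\varphi$ and $\xi$ tables are synchronised with $\xi_k^i = x_{k'-1}$ whenever $\varphi_k^i = x_{k'}$, is exactly the observation the paper uses. But the decomposition you feed it into is different from the paper's, and less efficient, to the point that your arithmetic cannot produce the stated coefficient $\tfrac{12}{n}$.

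The paper's first Young's step isolates $\tfrac{1}{n}(\tnabla^{\xi\textnormal{\tiny-SAGA}}_k - \gradSARGE_{k-1})$ rather than $\tfrac{1}{n}(\nabla f(x_{k-1}) - \gradSARGE_{k-1})$. The other summand is then $\gradSAGA_k - \tnabla^{\xi\textnormal{\tiny-SAGA}}_k$, which carries no $\tfrac{1}{n}$ prefactor. For this term the synchronisation collapses the table difference into a \emph{single} gradient increment: $\nabla f_i(\varphi_k^i) - \nabla f_i(\xi_k^i) = \nabla f_i(x_{k'}) - \nabla f_i(x_{k'-1})$, so no telescoping sum and no Cauchy--Schwarz length factor appear at all. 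The whole $\tfrac{12}{n}$ coefficient comes from one $\| \cdot \|^2 \le 2\|\cdot\|^2 + 2\|\cdot\|^2$ split applied to $\gradSAGA_k - \tnabla^{\xi\textnormal{\tiny-SAGA}}_k$, and $\E_k\|\nabla f_{j_k}(x_k) - \nabla f_{j_k}(x_{k-1})\|^2 = \tfrac{1}{n}\sum_i\|d_i^k\|^2$ is used as an equality, not via Jensen. The $\tfrac{1}{n^2}$ scaling on the historical sum is produced by the $\tfrac{1}{n^2}$ already sitting in front of $\|\tnabla^{\xi\textnormal{\tiny-SAGA}}_k - \gradSARGE_{k-1}\|^2$, with Lemma \ref{lem:sagaforsarge} supplying a plain $O(1)$ geometric bound.

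Your route, by contrast, lets $d_i^k$ appear twice: once inside the deterministic Lipschitz difference $\nabla f(x_k) - \nabla f(x_{k-1})$ (giving $\tfrac{6}{n}$ via Jensen, which is already lossier than the paper's equality), and again inside $\Delta_k$ through the telescoping identity. After the tripartite split $\|x+y+z\|^2 \le 3(\|x\|^2 + \|y\|^2 + \|z\|^2)$ and the variance drop $\E_k\|\Delta_k\|^2 \le \tfrac{1}{n}\sum_i\|C^i_k\|^2$, the $d_i^k$ piece of $\Delta_k$ contributes at least $6 \cdot \tfrac{1}{n}\cdot 3 = \tfrac{18}{n}$, not the $\tfrac{6}{n}$ you asserted. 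Your total coefficient on $\sum_i\|d_i^k\|^2$ is therefore at least $\tfrac{24}{n}$, and the promised "conspiracy" to $\tfrac{12}{n}$ does not happen with the inequalities as written. Your characterisation of what would go wrong "naively" is also off the mark: the paper never applies Lemmas \ref{lem:varsaga} and \ref{lem:sagaforsarge} independently to the two SAGA-type summands; it splits along the cleaner axis that keeps the $\tfrac{1}{n}$ prefactor attached to one summand and puts the telescoping structure on the other.

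None of this is fatal to the spirit of the argument: a bound of the stated form with larger constants (e.g.\ $\tfrac{24}{n}$ in place of $\tfrac{12}{n}$) would still feed Lemma \ref{lem:sargemse} and the complexity claims with only constant-factor changes. But to reproduce the lemma as stated you should adopt the paper's grouping, which avoids both Jensen's inequality on $\nabla f(x_k) - \nabla f(x_{k-1})$ and the Cauchy--Schwarz length factor on a telescoping sum.
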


\begin{proof}
To begin, we use the standard inequality $\|a - c\|^2 \le (1+\delta) \|a - b\|^2 + (1+\delta^{-1}) \|b - c\|^2$ for any $\delta > 0$ twice. For simplicity, we set $\delta = \sqrt{3/2} - 1$ and use the fact that $1 + \frac{1}{\sqrt{3/2} - 1} \le 6$ for both applications of this inequality.
\beqs\label{eq:vbound}
\begin{aligned}
&\E \ca{ \| \gradSARGE_k - \gradSARGE_{k-1}\|^2 } \\
&= \E \ca{ \| \gradSAGA_k - \pa{1 - \tfrac{1}{n}} ( \tnabla^{\xi\textnormal{\tiny-SAGA}}_k - \gradSARGE_{k-1} ) - \gradSARGE_{k-1} \|^2 } \\
&\le 6 \E \ca{ \| \gradSAGA_k - \tnabla^{\xi\textnormal{\tiny-SAGA}}_k \|^2 } + \tfrac{\sqrt{3}}{\sqrt{2} n^2} \E \ca{ \|\tnabla^{\xi\textnormal{\tiny-SAGA}}_k - \gradSARGE_{k-1} \|^2 } \\
&\le 6 \E \ca{ \| \gradSAGA_k - \tnabla^{\xi\textnormal{\tiny-SAGA}}_k \|^2 } + \tfrac{6 \sqrt{3}}{\sqrt{2} n^2} \E \ca{ \|\tnabla^{\xi\textnormal{\tiny-SAGA}}_k - \nabla f(x_{k-1}) \|^2 } + \tfrac{3}{2 n^2} \E \ca{ \|\nabla f(x_{k-1}) - \gradSARGE_{k-1} \|^2 }.
\end{aligned}
\eeqs
We use $\tfrac{6 \sqrt{3}}{\sqrt{2} n^2} \le \tfrac{9}{n^2}$ to simplify the coefficient of the second term. We now bound the first two of these three terms separately. Consider the first term.
\beqs
\begin{aligned}
&6 \E \ca{ \| \gradSAGA_k - \tnabla^{\xi\textnormal{\tiny-SAGA}}_k \|^2 } \\
&= 6 \E \ca{ \| \nabla f_{j_k}(x_k) - \nabla f_{j_k}(\varphi_k^{j_k}) + \tfrac{1}{n} \msum_{i=1}^n \nabla f_i(\varphi_k^i) - \nabla f_{j_k}(x_{k-1}) - \nabla f_{j_k}(\xi_k^{j_k}) - \tfrac{1}{n} \msum_{i=1}^n \nabla f_i(\xi_k^i) \|^2 } \\
&\le 12 \E \ca{ \| \nabla f_{j_k}(x_k) - \nabla f_{j_k}(x_{k-1}) \|^2 } \\
&\qquad + 12 \E \ca{ \| \nabla f_{j_k}(\varphi_k^{j_k}) - \nabla f_{j_k}(\xi_k^{j_k}) - \tfrac{1}{n} \msum_{i=1}^n \nabla f_i(\varphi_k^i) + \tfrac{1}{n} \msum_{i=1}^n \nabla f_i(\xi_k^i) \|^2 } \\
&\symnum{1}{=} 12 \E \ca{ \| \nabla f_{j_k}(x_k) - \nabla f_{j_k}(x_{k-1}) \|^2 } \\
&\qquad + 12 \E \ca{ \| \nabla f_{j_k}(\varphi_k^{j_k}) - \nabla f_j(\xi_k^{j_k}) \|^2 } - 12 \E \ca{ \| \tfrac{1}{n} \msum_{i=1}^n \nabla f_i(\varphi_k^i) - \tfrac{1}{n} \msum_{i=1}^n \nabla f_i(\xi_k^i) \|^2 } \\
&\le \tfrac{12}{n} \sum_{i=1}^n \E \ca{ \| \nabla f_i(x_k) - \nabla f_i(x_{k-1}) \|^2 } + 12 \E \ca{ \| \nabla f_{j_k}(\varphi_k^{j_k}) - \nabla f_{j_k} (\xi_k^{j_k}) \|^2 } \\
&\le \tfrac{12}{n} \sum_{i=1}^n \E \ca{ \| \nabla f_i(x_k) - \nabla f_i(x_{k-1}) \|^2 } + 12 \E \ca{ \| \nabla f_{j_k}(\varphi_k^{j_k}) - \nabla f_{j_k} (\xi_k^{j_k}) \|^2 } .
\end{aligned}
\eeqs
Equality \numcirc{1} is the standard variance decomposition, which states that for any random variable $X$, $\E \ca{ \| X - \E X \|^2 } = \E \ca{\|X\|^2} - \|\E \ca{X} \|^2$. The second term can be reduced further by computing the expectation. The probability that $\nabla f_{j_k} (\varphi_k^{j_k}) = \nabla f_{j_{k-1}}(x_{k-1})$ is equal to the probability that $j_k = j_{k-1}$, which is $1 / n$. The probability that $\nabla f_{j_k}(\varphi_k^{j_k}) = \nabla f_{j_{k-2}} (x_{k-2})$ is equal to the probability that $j_k \not = j_{k-1}$ and $j_k = j_{k-2}$, which is $\frac{1}{n} ( 1 - \frac{1}{n} )$. Continuing in this way,
\beqs
\begin{aligned}
&\E \ca{ \|\nabla f_{j_k}(\varphi_k^{j_k}) - \nabla f_{j_k}(\xi_k^{j_k}) \|^2 } \\
& = \tfrac{1}{n} \E \ca{ \| \nabla f_{j_{k-1}} (x_{k-1}) - \nabla f_{j_{k-1}} (x_{k-2}) \|^2 } + \tfrac{1}{n} \pa{1 - \tfrac{1}{n}} \E \ca{ \| \nabla f_{j_{k-2}} (x_{k-2}) - \nabla f_{j_{k-3}} (x_{k-2}) \|^2 } + \cdots \\
& = \tfrac{1}{n} \sum_{\ell = 1}^{k-1} \pa{1 - \tfrac{1}{n}}^{k - \ell - 1} \E \ca{ \| \nabla f_{j_{\ell}}(x_{\ell}) - \nabla f_{j_{\ell}}(x_{\ell-1})\|^2 } .
\end{aligned}
\eeqs
This implies that
\beqs
\begin{aligned}
12 \E \ca{ \| \nabla f_{j_k}(\varphi_k^{j_k}) - \nabla f_{j_k} (\xi_k^{j_k}) \|^2 } 
&\le \tfrac{12}{n^2} \sum_{\ell=1}^{k-1} \pa{1 - \tfrac{1}{n}}^{k-\ell-1} \sum_{i=1}^n \E \ca{ \| \nabla f_i(x_{\ell}) - \nabla f_i(x_{\ell-1})\|^2 } \\
&\le \tfrac{12}{n^2} \sum_{\ell=1}^{k-1} (1-\tfrac{1}{2 n})^{k-\ell-1} \sum_{i=1}^n \E \ca{ \| \nabla f_i(x_{\ell}) - \nabla f_i(x_{\ell-1})\|^2 } .
\end{aligned}
\eeqs
We include the second inequality to simplify later arguments. This completes our bound for the first term of \eqref{eq:vbound}.

For the second term of \eqref{eq:vbound}, we recall Lemma \ref{lem:sagaforsarge}.
\beq
\E \ca{ \|\tnabla^{\xi\textnormal{\tiny-SAGA}}_k - \nabla f(x_{k-1}) \|^2 } 
\le 3 \sum_{\ell = 1}^{k-1} (1 - \tfrac{1}{2 n} )^{k - \ell - 1} \sum_{i=1}^n \E \ca{ \|\nabla f_i(x_{\ell}) - \nabla f_i(x_{\ell-1}) \|^2 } .
\eeq
Combining all of these bounds, we obtain
\beqs
\begin{aligned}
\E \ca{ \| \gradSARGE_k - \gradSARGE_{k-1}\|^2 } 
&\le \tfrac{12}{n} \sum_{i=1}^n \E \ca{ \|\nabla f_i(x_k) - \nabla f_i(x_{k-1}) \|^2 } + \tfrac{3}{2 n^2} \E \ca{ \|\nabla f(x_{k-1}) - \gradSARGE_{k-1} \|^2 } \\&\qquad + \tfrac{39}{n^2} \sum_{\ell=1}^{k-1} ( 1 - \tfrac{1}{2 n} )^{k - \ell - 1} \sum_{i=1}^n \E \ca{ \|\nabla f_i(x_{\ell}) - \nabla f_i(x_{\ell - 1}) \|^2 },
\end{aligned}
\eeqs
which completes the proof.
\end{proof}
Lemma \ref{lem:vbound} allows us to take advantage of the recursive structure of our gradient estimate. With this lemma established, we can prove a bound on the MSE.

\begin{lemma}
\label{lem:sargerecurse}
The SARGE gradient estimator satisfies the following recursive bound:
\beqs
\begin{aligned}
\E \ca{ \|\gradSARGE_k - \nabla f(x_k)\|^2 } 
&\le ( 1 - \tfrac{1}{n} + \tfrac{3}{2 n^2} ) \E \ca{ \| \gradSARGE_{k-1} - \nabla f(x_{k-1}) \|^2 } + \tfrac{12}{n} \sum_{i=1}^n \E \ca{ \|\nabla f_i(x_k) - \nabla f_i(x_{k-1}) \|^2 } \\&\qquad + \tfrac{39}{n^2} \sum_{\ell=1}^{k-1} ( 1 - \tfrac{1}{2 n} )^{k - \ell - 1} \sum_{i=1}^n \E \ca{ \|\nabla f_i(x_{\ell}) - \nabla f_i(x_{\ell - 1}) \|^2 } . 
\end{aligned}
\eeqs
\end{lemma}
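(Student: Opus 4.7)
The plan is to combine the recursive bias identity for SARGE with the one-step difference bound of Lemma \ref{lem:vbound}, glued by a variance/bias decomposition that uses $\gradSARGE_{k-1}$ as a convenient pivot. The first ingredient is the recursive bias property (Definition \ref{def:recursively_biased}, with the parameters from Lemma \ref{lem:sargemse}): because $\rho_B = 1/n$, the conditional expectation satisfies
$$\E_k \gradSARGE_k - \nabla f(x_k) = (1 - \tfrac{1}{n})(\gradSARGE_{k-1} - \nabla f(x_{k-1})),$$
which isolates the systematic part of the error and delivers the factor $(1 - 1/n)^2$ that the final coefficient $1 - 1/n + 3/(2n^2)$ will descend from.

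Next I would apply the standard variance/bias decomposition
$$\E_k \|\gradSARGE_k - \nabla f(x_k)\|^2 = \E_k \|\gradSARGE_k - \E_k \gradSARGE_k\|^2 + \|\E_k \gradSARGE_k - \nabla f(x_k)\|^2.$$
By the previous step, the second summand is exactly $(1 - 1/n)^2 \|\gradSARGE_{k-1} - \nabla f(x_{k-1})\|^2$. For the first (conditional variance) summand I would use the elementary fact that $\E_k\|X - \E_k X\|^2 \le \E_k\|X - c\|^2$ for every $\mathcal{F}_{k-1}$-measurable $c$, and pick $c = \gradSARGE_{k-1}$, which is legitimate since $\gradSARGE_{k-1}$ depends only on data available before step $k$. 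This yields
$$\E_k\|\gradSARGE_k - \E_k \gradSARGE_k\|^2 \le \E_k\|\gradSARGE_k - \gradSARGE_{k-1}\|^2.$$

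Finally, I would take full expectations and feed the right-hand side into Lemma \ref{lem:vbound}. That lemma directly produces the $\frac{12}{n}\sum_i \E\|\nabla f_i(x_k) - \nabla f_i(x_{k-1})\|^2$ contribution, the geometric tail $\frac{39}{n^2}\sum_\ell (1 - \tfrac{1}{2n})^{k-\ell-1}\sum_i \E\|\nabla f_i(x_\ell) - \nabla f_i(x_{\ell-1})\|^2$, and an extra $\frac{3}{2n^2}\,\E\|\nabla f(x_{k-1}) - \gradSARGE_{k-1}\|^2$ term. Adding this last contribution to the $(1-1/n)^2$ coefficient carried over from the bias step gives $(1 - 1/n)^2 + 3/(2n^2)$, and the elementary estimate $(1 - 1/n)^2 \le 1 - 1/n$ for $n \ge 1$ upgrades the sum to the announced $1 - 1/n + 3/(2n^2)$.

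The only bookkeeping point to watch is measurability when invoking the variance-minimization inequality: $\gradSARGE_{k-1}$ must be $\mathcal{F}_{k-1}$-measurable, which is immediate from its recursive definition in terms of $\{x_\ell\}_{\ell \le k-1}$ and $\{j_\ell\}_{\ell \le k-1}$. No genuine obstacle is expected; the argument is a short two-ingredient assembly followed by a one-line constant manipulation.
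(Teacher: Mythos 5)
Your proof is correct and takes a genuinely cleaner route than the paper's. The paper expands
$\gradSARGE_k - \nabla f(x_k) = (\gradSARGE_{k-1} - \nabla f(x_{k-1})) + (\nabla f(x_{k-1}) - \nabla f(x_k)) + (\gradSARGE_k - \gradSARGE_{k-1})$,
then computes three squared terms and three cross terms (one of which requires a Young's inequality with weight $1/n$), arriving after some cancellations at
$\E\|\gradSARGE_k - \nabla f(x_k)\|^2 \le (1 - \tfrac{1}{n})\E\|\gradSARGE_{k-1} - \nabla f(x_{k-1})\|^2 + \E\|\gradSARGE_k - \gradSARGE_{k-1}\|^2$,
before invoking Lemma~\ref{lem:vbound}. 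You instead use the orthogonal conditional bias--variance decomposition and the variance-minimizing property of the conditional mean with pivot $\gradSARGE_{k-1}$, which yields
$\E\|\gradSARGE_k - \nabla f(x_k)\|^2 \le (1 - \tfrac{1}{n})^2\,\E\|\gradSARGE_{k-1} - \nabla f(x_{k-1})\|^2 + \E\|\gradSARGE_k - \gradSARGE_{k-1}\|^2$
in two lines with no cross-term bookkeeping and, incidentally, a strictly tighter intermediate coefficient $(1-1/n)^2$ versus the paper's $(1-1/n)$; both then relax to $1 - 1/n + 3/(2n^2)$ after adding the $3/(2n^2)$ contribution from Lemma~\ref{lem:vbound}. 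The two proofs rely on the same two ingredients (the recursive bias identity $\nabla f(x_k) - \Ek\gradSARGE_k = (1-\tfrac{1}{n})(\nabla f(x_{k-1}) - \gradSARGE_{k-1})$ and the one-step difference bound), so this is an algebraic simplification rather than a structurally new argument, but it is a real improvement in clarity. One presentation caveat: you cite the bias parameter as coming from Lemma~\ref{lem:sargemse}, but in the paper that lemma is proved \emph{after} and \emph{using} the present one; what you actually need is only the direct computation of $\Ek[\gradSARGE_k]$, which the paper also performs inside the proof here, so there is no genuine circularity --- just state the identity from the definition of SARGE rather than by reference to Lemma~\ref{lem:sargemse}.
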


\begin{proof}
The beginning of our proof is similar to the proof of the variance bound for the SARAH gradient estimator in \cite[Lem. 2]{sarah}.
\beqs
\begin{aligned}
\Ek \|\gradSARGE_k - \nabla f(x_k)\|^2 
&= \Ek \ca{ \| \gradSARGE_{k-1} - \nabla f(x_{k-1}) + \nabla f(x_{k-1}) - \nabla f(x_k) + \gradSARGE_k - \gradSARGE_{k-1} \|^2 } \\
&= \| \gradSARGE_{k-1} - \nabla f(x_{k-1}) \|^2 + \|\nabla f(x_{k-1}) - \nabla f(x_k) \|^2 + \Ek \ca{ \| \gradSARGE_k - \gradSARGE_{k-1} \|^2 } \\
&\qquad + 2 \langle \nabla f(x_{k-1} ) - \gradSARGE_{k-1}, \nabla f(x_k) - \nabla f(x_{k-1}) \rangle \\
&\qquad - 2 \langle \nabla f(x_{k-1} ) - \gradSARGE_{k-1}, \Ek [ \gradSARGE_k - \gradSARGE_{k-1} ] \rangle \\
&\qquad - 2 \langle \nabla f(x_k) - \nabla f(x_{k-1}), \Ek [ \gradSARGE_k - \gradSARGE_{k-1} ] \rangle.
\end{aligned}
\eeqs
We consider each inner product separately. The first inner product is equal to
\beqs
\begin{aligned}
&2 \langle \nabla f(x_{k-1} ) - \gradSARGE_{k-1}, \nabla f(x_k) - \nabla f(x_{k-1}) \rangle \\
&= - \|\nabla f(x_{k-1} ) - \gradSARGE_{k-1}\|^2 - \|\nabla f(x_k) - \nabla f(x_{k-1})\|^2 + \|\nabla f(x_k) - \gradSARGE_{k-1}\|^2.
\end{aligned}
\eeqs
For the next two inner products, we use the fact that
\beqs
\begin{aligned}
\Ek [ \gradSARGE_k - \gradSARGE_{k-1} ] 
&= \Ek \Ca{ \gradSAGA_k - \pa{1 - \tfrac{1}{n}} \tnabla^{\xi\textnormal{\tiny-SAGA}}_k + \pa{1 - \tfrac{1}{n}} \gradSARGE_{k-1} } - \gradSARGE_{k-1} \\
&= \nabla f(x_k) - ( 1 - \tfrac{1}{n} ) \nabla f(x_{k-1}) - \tfrac{1}{n} \gradSARGE_{k-1} \\
&= \nabla f(x_k) - \nabla f(x_{k-1}) + \tfrac{1}{n} ( \nabla f(x_{k-1}) - \gradSARGE_{k-1} ).
\end{aligned}
\eeqs
With this equality established, we see that the second inner product is equal to
\beqs
\begin{aligned}
& - 2 \langle \nabla f(x_{k-1}) - \gradSARGE_{k-1}, \Ek [ \gradSARGE_k - \gradSARGE_{k-1} ] \rangle \\
&= - 2 \langle \nabla f(x_{k-1}) - \gradSARGE_{k-1}, \nabla f(x_k) - \nabla f(x_{k-1}) \rangle - \tfrac{2}{n} \langle \nabla f(x_{k-1} ) - \gradSARGE_{k-1}, \nabla f(x_{k-1}) - \gradSARGE_{k-1} \rangle \\
&= \| \nabla f(x_{k-1} ) - \gradSARGE_{k-1} \|^2 + \| \nabla f(x_k) - \nabla f(x_{k-1}) \|^2 \\&\qquad - \|\nabla f(x_k) - \gradSARGE_{k-1} \|^2 - \tfrac{2}{n} \| \nabla f(x_{k-1} ) - \gradSARGE_{k-1} \|^2 \\
&= (1 - \tfrac{2}{n} ) \| \nabla f(x_{k-1} ) - \gradSARGE_{k-1} \|^2 + \| \nabla f(x_k) - \nabla f(x_{k-1}) \|^2 - \|\nabla f(x_k) - \gradSARGE_{k-1} \|^2.
\end{aligned}
\eeqs
The third inner product can be bounded using a similar procedure.
\beqs
\begin{aligned}
& - 2 \langle \nabla f(x_k) - \nabla f(x_{k-1}), \Ek [ \gradSARGE_k - \gradSARGE_{k-1} ] \rangle \\
&= - 2 \langle \nabla f(x_k) - \nabla f(x_{k-1}), \nabla f(x_k) - \nabla f(x_{k-1}) \rangle - \tfrac{2}{n} \langle \nabla f(x_k ) - \nabla f(x_{k-1}), \nabla f(x_{k-1}) - \gradSARGE_{k-1} \rangle \\
&\le - 2 \| \nabla f(x_k) - \nabla f(x_{k-1}) \|^2 + \tfrac{1}{n} \|\nabla f(x_k ) - \nabla f(x_{k-1})\|^2 + \tfrac{1}{n} \|\nabla f(x_{k-1}) - \gradSARGE_{k-1} \|^2 \\
&= - (2 - \tfrac{1}{n}) \| \nabla f(x_k) - \nabla f(x_{k-1}) \|^2 + \tfrac{1}{n} \|\nabla f(x_{k-1}) - \gradSARGE_{k-1} \|^2 ,
\end{aligned}
\eeqs
where the inequality is Young's. Altogether and after applying the full expectation operator, we have
\beqs
\begin{aligned}
\E \ca{ \|\gradSARGE_k - \nabla f(x_k)\|^2 } 
&\le ( 1 - \tfrac{1}{n} ) \E \ca{ \| \gradSARGE_{k-1} - \nabla f(x_{k-1}) \|^2 } \\&\qquad - ( 1 - \tfrac{1}{n} ) \E \ca{ \|\nabla f(x_k) - \nabla f(x_{k-1})\|^2 } + \E \ca{ \| \gradSARGE_k - \gradSARGE_{k-1}\|^2 } \\
&\le ( 1 - \tfrac{1}{n} ) \E \ca{ \| \gradSARGE_{k-1} - \nabla f(x_{k-1}) \|^2 } + \E \ca{ \| \gradSARGE_k - \gradSARGE_{k-1}\|^2 } .
\end{aligned}
\eeqs
Finally, we bound the last term on the right using Lemma \ref{lem:vbound}.
\beqs
\begin{aligned}
\E \|\gradSARGE_k - \nabla f(x_k)\|^2 
&\le ( 1 - \tfrac{1}{n} + \tfrac{3}{2 n^2} ) \E \ca{ \| \gradSARGE_{k-1} - \nabla f(x_{k-1}) \|^2 } + \sfrac{12}{n} \sum_{i=1}^n \E \ca{ \|\nabla f_i(x_k) - \nabla f_i(x_{k-1}) \|^2 } \\
&\qquad + \sfrac{39}{n^2} \sum_{\ell=1}^{k-1} ( 1 - \tfrac{1}{2 n} )^{k - \ell - 1} \sum_{i=1}^n \E \ca{ \|\nabla f_i(x_{\ell}) - \nabla f_i(x_{\ell - 1}) \|^2 } 
\end{aligned}
\eeqs
and complete the proof.
\end{proof}

\begin{prooftext}{Proof of Lemma \ref{lem:sargemse}}
It is easy to see that $\rho_B = 1 / n$ by computing the expectation of the SARGE gradient estimator.
\beqs
\begin{aligned}
\nabla f(x_k) - \Ek \ca{ \gradSARGE_k } 
& = \nabla f(x_k) - \Ek [ \gradSAGA_k - \pa{1 - \tfrac{1}{n}} ( \tnabla^{\xi\textnormal{\tiny-SAGA}}_k - \gradSARGE_{k-1} ) ] \\
&= \pa{1 - \tfrac{1}{n}} ( \nabla f(x_{k-1}) - \gradSARGE_{k-1} ).
\end{aligned}
\eeqs
The result of Lemma \ref{lem:sargerecurse} makes it clear that $M_1 = 12$. To determine $\rho_M$, we must first choose a suitable sequence $\calM_k$. Let $\calM_k = \E \ca{ \|\gradSARGE_k - \nabla f(x_k)\|^2 }$. If $n = 1$, then $\calM_k=0$ for all $k$, so it holds trivially that $\calM_k \le (1-\rho_M) \calM_{k-1}$. If $n \ge 2$, then $1 - \frac{1}{n} + \frac{3}{2 n^2} \le 1 - \frac{1}{4 n}$, so Lemma \ref{lem:sargerecurse} ensures that with $\rho_M = \frac{1}{4 n}$, $\calM_k \le (1-\rho_M) \calM_{k-1}$.

Finally, we must compute $M_2$ and $\rho_F$ with respect to some sequence $\calF_k$. Lemma \ref{lem:sargerecurse} motivates the choice
\beq
\calF_k = \sum_{\ell=1}^{k-1} \pa{ 1 - \tfrac{1}{2 n} }^{k - \ell - 1} \sum_{i=1}^n \E \ca{ \|\nabla f_i(x_{\ell}) - \nabla f_i(x_{\ell - 1}) \|^2 } ,
\eeq
and the choices $M_2 = \frac{39}{n}$ and $\rho_F = \frac{1}{2 n}$ are clear.
\end{prooftext}

\end{document}